\documentclass[a4paper,oneside,11pt,reqno]{amsart}
\ProvidesFile{mdwtab.sty}
\usepackage[margin=30mm]{geometry}
\usepackage[utf8]{inputenc}
\usepackage[english]{babel}
\usepackage{amsmath,amssymb,amsthm,amsfonts,amsopn}
\usepackage{stmaryrd}
\usepackage{dsfont} %For \mathds
\usepackage{empheq}
\usepackage{enumerate,enumitem}

\usepackage[table, dvipsnames]{xcolor}%Must be loaded before packages arydshln & nicematrix
\usepackage{arydshln,nicematrix}%Must be loaded after package xcolor

\usepackage{caption}
\usepackage{subcaption}
\usepackage{graphicx}
\usepackage{pgfplots}
    \pgfplotsset{compat=newest}

\usepackage{ifthen}
\usepackage{float}

\setcounter{tocdepth}{1}
\setcounter{secnumdepth}{3}

%%%%%%%%%%%%%%%%%%%%%%%%%%%%
\theoremstyle{plain}
\newtheorem{theorem}{Theorem}[section]
\newtheorem{lemma}[theorem]{Lemma}
\newtheorem{corollary}[theorem]{Corollary}
\newtheorem{proposition}[theorem]{Proposition}
\newtheorem{definition}[theorem]{Definition}
\newtheorem{conjecture}[theorem]{Conjecture}

%%%%%%%
%%To identify the end of remarks
\newcommand\xqed[1]{%
	\leavevmode\unskip\penalty9999 \hbox{}\nobreak\hfill\quad\hbox{#1}%
}
\newcommand\remarkend{\xqed{$\triangle$}}
\makeatletter
    \def\@endtheorem{\remarkend\endtrivlist\@endpefalse }
\makeatother
\theoremstyle{remark}

\newtheorem*{remark*}{Remark}
%Returning to the default behaviour
\makeatletter
    \def\@endtheorem{\endtrivlist\@endpefalse }
\makeatother
%%%%%%%
%%%%%%%%%%%%%%%%%%%%%%%%%

%%%%%%%%%%%%%%%%%%%%
%%%%% NEW COMMANDS %%%%%%
\renewcommand{\leq}{\leqslant}
\renewcommand{\geq}{\geqslant}
\renewcommand{\cosh}{\operatorname{ch}}
\renewcommand{\sinh}{\operatorname{sh}}
\renewcommand{\tanh}{\operatorname{th}}
%%% Operators

\renewcommand{\Im}{\operatorname{Im}}
\newcommand{\rank}{\operatorname{rank}}

 % rename builtin command \div to \divsymb
\let\div\relax
\DeclareMathOperator{\div}{div}

%% Set of numbers
\newcommand{\N}{\mathbb{N}} % Natural numbers
\newcommand{\C}{\mathbb{C}} % Complex numbers
\newcommand{\Z}{\mathbb{Z}} % Integer numbers
\newcommand{\R}{\mathbb{R}} % Real numbers

%%%Letters
\renewcommand{\epsilon}{\varepsilon}
\newcommand{\ii}{\mathrm{i}\mkern1mu}
\newcommand{\bu}{{\bf u}}
\newcommand{\bx}{{\bf x}}

\newcommand{\be}{{\bf e}}
\newcommand{\bC}{{\bf C}}

\newcommand{\beps}{{\boldsymbol \varepsilon}}
\newcommand{\btau}{{\boldsymbol \tau}}

\newcommand{\bnabla}{{\boldsymbol \nabla}}
\newcommand{\bxi}{{\boldsymbol \xi}}

%%%%%%%%%%%%%%%%%%%%%

%%% Scalar Product & norms
\newcommand{\norm}[1]{\left\vert\kern-0.25ex\left\vert #1 \right\vert\kern-0.25ex\right\vert}
\newcommand{\normNS}[1]{\vert\kern-0.25ex\vert #1 \vert\kern-0.25ex\vert}
%%%
\newcommand*{\transp}{^{\mkern-1.5mu\mathsf{T}}}
\newcommand{\di}{\,\mathrm{d}}

%%%Colors
\definecolor{BleuSombre}{rgb}{0,0,0.6}
\definecolor{RougeSombre}{rgb}{0.8,0,0}
\definecolor{VertSombre}{rgb}{0,0.6,0}

\usepackage[plainpages=false,colorlinks,linkcolor=BleuSombre, citecolor=RougeSombre,urlcolor=VertSombre,breaklinks]{hyperref} %Must be the last loaded package (except for package cleveref, if loaded)
\usepackage[capitalize]{cleveref} %Must be the last loaded package

\allowdisplaybreaks

\title[Inverse problem for Love waves in a layered, elastic half-space]{Inverse problem for Love waves\\in a layered, elastic half-space}
 
\author[M. V. de Hoop]{Maarten V. de Hoop}
	\address{Simons Chair in Computational and Applied Mathematics and Earth Science, Rice University, Houston, TX 77005}
	\email{\href{mdehoop@rice.edu}{mdehoop@rice.edu}}

\author[J. Garnier]{Josselin Garnier}
	\address{CMAP, CNRS, {\'E}cole polytechnique, Institut Polytechnique de Paris, 91120 Pa\-lai\-seau, France}
	\email{\href{josselin.garnier@polytechnique.edu}{josselin.garnier@polytechnique.edu}}

\author[A. Iantchenko]{Alexei Iantchenko}
	\address{Department of Materials and Applied Mathematics, Faculty of Technology and Society, Malm{\"o} University, Malm{\"o}, Sweden}
	\email{\href{ai@mau.se}{ai@mau.se}}

\author[J. Ricaud]{Julien Ricaud${}^\ddagger$}\thanks{${}^\ddagger$Corresponding author.}
	\address{CMAP, CNRS, {\'E}cole polytechnique, Institut Polytechnique de Paris, 91120 Pa\-lai\-seau, France}
	\email[Corresponding author]{\href{julien.ricaud@polytechnique.edu}{julien.ricaud@polytechnique.edu}}

\subjclass[2020]{74J25, 74J15, 86A15, 86A22, 35R30}

\begin{document}

\begin{abstract}
    In this paper we study Love waves in a layered, elastic half-space. 
    We first address the direct problem and we characterize the existence of Love waves through the dispersion relation. 
    We then address the inverse problem and we show how to recover the parameters of the elastic medium from the empirical knowledge of the frequency--wavenumber couples of the Love waves.
\end{abstract}

\maketitle
\tableofcontents

\section{Introduction}\label{Section_introduction}
The paper is motivated by applications in seismology. Surface wave tomography has been used for a long time in global seismology to image crustal and upper mantle structures. It consists in extracting the dispersion curves of the surface waves (i.e., their frequency-dependent velocities). From those curves the three-dimensional map of the parameters of the elastic medium can be deduced tomographically. This last step is the topic of our paper.

Surface wave tomography was first used with natural seismic events~\cite{TraWoo-96,EksTroLar-97,BosDzi-99}.
It has recently attracted attention because it was shown that it can be used with low-frequency seismic ambient noise~\cite{ShaCamSteRit-05,LinMosRit-08,NisMonKaw-09} or both types of data (ambient-noise and earthquakes)~\cite{KasElSBosMeiRosBelCriWei-18}.
Indeed, surface waves can be easily extracted from ambient noise signals~\cite{ShaCam-04,GarPap}, because they dominate the Green function between receivers located at the surface and because ambient seismic noise is mostly excited by superficial sources, such as oceanic microseisms, ocean swell, and atmospheric disturbances~\cite{RhiRom-04}.
Finally, the use of coda wave interferometry, i.e., the analysis of the cross correlations of the tails of seismographs generated by earthquakes and that correspond to multiply scattered waves, has recently opened new ways to extract the dispersion curves~\cite{deHGarSol-22}.

Most inversion methods assume high-frequency asymptotics~\cite{deHIanvdHZha-20a} while the recent applications using ambient noise provide rich low-frequency information. That is why we would like to investigate the inverse problem associated with surface waves in a general framework.
In this work, we analyze the inverse problem associated with Love waves for a time-independent, isotropic, stratified half-space, homogeneous in the $(x,y)$-plane. We show how to recover the parameters of the elastic medium from the empirical knowledge of the dispersion relation. That is, from the empirical knowledge of the frequency--wavenumber couples of the Love waves.

The discontinuity that we will assume on the media and our goal to obtain results for all frequencies yield us to use tools from complex analysis and from analytic perturbation theory~\cite{Kato}. Moreover, the discontinuity assumption makes standard formulae for Weyl's law unavailable to us: we establish them by direct computations and a careful analysis.

We consider the space $\R^2\times[0,+\infty)$ and assume that the relevant quantities are constant on layers of the form $\R^2\times[H_j,H_{j+1})$. More precisely, we consider a medium composed of $n+1$~layers, $n\geq1$, such that the shear modulus $\mu>0$ and the density $\rho>0$ of the medium are constant inside each layer\footnote{with the convention, for $n=1$, that $\llbracket 2, 1 \rrbracket=\emptyset$.}:
\begin{equation}
    (\mu(z), \rho(z)) =
    \left\{
        \begin{aligned}
             &(\mu_1, \rho_1)\,, \quad &&\textrm{if }\!\!\!\! &0 &\leq z < H_2\,, \\
            &(\mu_j, \rho_j)\,, \quad &&\textrm{if }\!\!\!\! &H_j &\leq z < H_{j+1}\,, \quad \forall\, j \in \llbracket 2, n \rrbracket , \\
            &(\mu_{n+1}, \rho_{n+1})\,, \quad &&\textrm{if }\!\!\!\! &H_{n+1} &\leq z < +\infty\,,
        \end{aligned}
    \right.
\end{equation}
where we recall that $\llbracket p, n \rrbracket = [p, n]\cap\Z$.
Or, more concisely with $H_1:=0$ and $H_{n+2}:=+\infty$,
\begin{equation}
    \forall\, j \in \llbracket 1, n+1 \rrbracket\,, \quad (\mu(z), \rho(z)) = (\mu_j, \rho_j) \qquad \textrm{on }\, [H_j, H_{j+1})\,.
\end{equation}

Within this setup, we are interested in Love waves. That is, in frequency--wavenumber couples $(\omega,k)$ for which there exists $L^2((0,+\infty))$-solutions $\phi$ to the boundary value problem
\begin{equation}
\label{Problem_equations_intro}
    \left\{
    \begin{aligned}
        &-\left( \mu \phi' \right)'\!(z) + \left(\mu(z) k^2 - \rho(z) \omega^2 \right) \phi(z) = 0\,, \quad \text{on } [0,+\infty)\,, \\
        &\phi'(0) = 0\,,
    \end{aligned}
    \right.
\end{equation}
with continuity conditions resulting from the continuity of the displacement and of the shear and normal stress components:
$\phi \in \mathcal{C}([0,+\infty))$ and $\mu \phi' \in \mathcal{C}([0,+\infty))$.
Without loss of generality we restrict ourselves to $\phi$ real-valued: $\phi\in L^2((0,+\infty);\R)$.
See Appendix~\ref{Derivation_Love_waves_problem} for the derivation of this problem, as well as the continuity conditions, from the laws of physics.

Since $\mu$ and $\rho$ are positive, we define
\begin{equation}
        C(z) := \sqrt{\mu(z) / \rho(z)}>0 \quad \text{on } [0,+\infty) \quad \text{and} \quad C_j := \sqrt{\mu_j / \rho_j} \quad \text{for } j \in \llbracket 1, n+1 \rrbracket\,.
\end{equation}
We furthermore define
\begin{equation}
    C_\infty := C_{n+1} = \lim_{+\infty} C = C(z)\,, \, \, \forall\, z \geq H_{n+1}\,, \quad \text{ and } \quad C_0 := \min\limits_{[0,+\infty)} C\,.
\end{equation}

We emphasize that we do not assume a priori that $C$ is non-decreasing. The only assumption made on the values of $C$ is that $C_0 < C_\infty$ as, otherwise, there cannot be Love waves (see Lemma~\ref{Alt_Def_kell_nPlus1_layers}).

On each layer (indexed by $j$), $\mu=\mu_j$ and $\rho=\rho_j$ being positive constants, the eigenvalue equation becomes $\phi'' = ( k^2 - \omega^2 / C_j^2 ) \phi$. Consequently, for $j\in\llbracket 0, n+1 \rrbracket\cup\{\infty\}$, we define
\begin{equation}\label{Def_nu}
    \nu_j \equiv \nu_j(\omega, k) := C_j^{-1} \sqrt{C_j^2 k^2 - \omega^2} = \omega \sqrt{k^2 \omega^{-2} - C_j^{-2}} \quad \textrm{ with } \Im \nu_j\leq 0\,.
\end{equation}
On each layer the solutions are either of the form
\begin{equation}
    A_{j,+} e^{+\nu_j z} + A_{j,-} e^{-\nu_j z}
\end{equation}
or affine. The requirement of the solution being $L^2$ imposes that on the last layer (which has parameters $\nu_\infty=\nu_{n+1}$, $A_{\infty,+}=A_{n+1,+}$, and $A_{\infty,-}=A_{n+1,-}$) the solution is of the former form with $\nu_\infty>0$ and $A_{\infty,+}=0$. This means that for a Love wave $\phi$ to exist at~$(\omega,k)$, it must verify that $k$ is bounded away from zero by $k > \omega / C_\infty \geq0$ and that $\phi$ must vanish (exponentially) at infinity.

Finally, we define, for each layer $j\in\llbracket 1, n+1 \rrbracket$, its thickness
\begin{equation}
    T_j := H_{j+1} - H_j \in (0,+\infty]\,,
\end{equation}
as well as, for $j\in \llbracket 0, n+1 \rrbracket\cup\{+\infty\}$, the parameters independent of $\omega$
\begin{equation}\label{Def_nu_bar}
    \bar{\nu}_j \equiv \bar{\nu}_j(y) := \frac{\nu_j(\omega, \omega y)}{\omega} = \sqrt{y^2 - C_j^{-2}} \quad \textrm{ with } \Im \bar{\nu}_j\leq 0\,.
\end{equation}

\bigskip

\textbf{Main results.}
The boundary condition at $z=0$, the $L^2$-restriction, and the continuity conditions determine, for each $\omega>0$, the finite set of values of $k$ for which a Love wave exists at the parameters $(\omega, k)$. Summarizing the above, we consider the problem of finding $0\not\equiv\phi\equiv\phi_{\omega,k}\in L^2((0,+\infty))$ such that
\begin{equation}\label{Problem_equations_new}
    \left\{
    \begin{aligned}
        &-\frac{\di}{\di z}\left( \mu \frac{\di}{\di z} \phi \right) = \mu \omega^2 \left( 1/C^2 - k^2/\omega^2 \right) \phi\,, \quad \text{on } [0,+\infty)\,, \qquad k/\omega > 1/C_\infty\,,\\
        &\phi \in \mathcal{C}([0,+\infty)) \textrm{ with } \lim\limits_{+\infty} \phi = 0\,, \quad \textrm{ and } \quad \mu \phi' \in \mathcal{C}([0,+\infty)) \textrm{ with } \phi'(0) = 0\,.
    \end{aligned}
    \right.
\end{equation}
In the rest of the paper, we will say that ``a Love wave \emph{exists at}~$(\omega, k)$'', whenever there exists an $L^2$-solution~$\phi_{\omega,k}\not\equiv0$ to~\eqref{Problem_equations_new} for the couple~$(\omega, k)$.

The goal of this paper is to recover the profiles of the shear modulus $\mu>0$ and the density $\rho>0$ of the medium, or at least their ratio, as well as the values $H_j$'s, from the experimental knowledge of the couples $(\omega, k)$ at which a Love wave exists.

Looking at these $k$'s as functions of $\omega$, we will show that they form branches $\omega\mapsto k_\ell(\omega)$, $\ell\geq1$, and our first main result is the following.
\begin{theorem}[Regularity and monotonicity of the branches $k_\ell$]\label{Thm_monotonicity_nPlus1_layers_intro}
    Let $n\geq1$. For any $\ell\geq1$, there exists $\omega_\ell\geq0$ such that the function
    \begin{align*}
        (\omega_\ell, +\infty) &\to (1/C_\infty, 1/C_0) \\
        \omega &\mapsto k_\ell(\omega)/\omega
    \end{align*}
     is analytic, bijective, and increasing.
\end{theorem}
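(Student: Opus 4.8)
The plan is to set up the dispersion relation as the vanishing of an analytic function and then exploit monotonicity in the rescaled variable $y = k/\omega$. First I would build the transfer-matrix (or propagator) solution layer by layer: on the last layer the $L^2$-condition forces $\phi(z) = A_\infty e^{-\nu_\infty z}$ with $\nu_\infty > 0$ (so $y > 1/C_\infty$), and then I would propagate this solution downward through each layer $j = n, n-1, \dots, 1$ using the matching conditions $\phi \in \mathcal{C}$, $\mu\phi' \in \mathcal{C}$. This produces $\phi(z;\omega,k)$ together with $\phi'(0;\omega,k)$ as explicit expressions in terms of $\cosh$, $\sinh$ (or affine pieces when $\nu_j = 0$) of the quantities $\nu_j T_j = \omega\,\bar\nu_j(y)\,T_j$ and of the ratios $\mu_j \nu_j$. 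A Love wave exists at $(\omega,k)$ exactly when the boundary condition $\phi'(0) = 0$ holds, i.e. when a certain function $F(\omega, k) = 0$; crucially, homogeneity lets me write $F(\omega,k) = G(\omega, y)$ with $y = k/\omega$, where $G$ depends on $\omega$ only through the products $\omega\bar\nu_j(y) T_j$. Because each $\bar\nu_j(y)$ is analytic in $y$ on the relevant interval and $\omega \mapsto \omega\bar\nu_j(y) T_j$ is analytic, $G$ is real-analytic on $(0,+\infty) \times (1/C_\infty, 1/C_0)$, and the analytic implicit function theorem will give analyticity of each branch once monotonicity (hence $\partial_\omega G \neq 0$ or $\partial_y G \neq 0$ along the branch) is established.

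The heart of the argument is monotonicity, and here I would pass to the variational/Sturm–Liouville picture rather than fight the transfer matrix directly. For fixed $y \in (1/C_\infty, 1/C_0)$, rescale $z$ and consider the operator associated with the quadratic form $\int \mu(|\phi'|^2 + \omega^2 V_y \phi^2)$ with $V_y(z) = y^2 - 1/C(z)^2$; on the last layer $V_y > 0$ and on the region where $C$ is near its minimum $V_y < 0$. Love waves at this $y$ correspond to $\omega$ such that the Neumann problem has the eigenvalue $0$, i.e. to zeros of a spectral shift / to crossings of eigenvalue curves. The key monotonicity input is that the potential term $\omega^2 V_y$ is, for $V_y$ with a negative part, strictly decreasing in $\omega$ on the negative part and the effective depth of the ``well'' controls the count: as $\omega$ increases the number of negative/zero eigenvalues is nondecreasing and the $\ell$-th crossing $\omega_\ell$ is simple, which yields that along the $\ell$-th branch $y = y_\ell(\omega)$ one has $\partial_\omega$ of the relevant eigenvalue and $\partial_y$ of it having opposite definite signs (via Feynman–Hellmann: $\partial_\omega$ brings down $2\omega\int \mu V_y\phi^2 < 0$ on the branch since the eigenfunction must charge the well, while $\partial_y$ brings down $2\omega^2 y\int\mu\phi^2 > 0$). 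Implicit differentiation then gives $y_\ell'(\omega) > 0$: the map $\omega \mapsto k_\ell(\omega)/\omega$ is strictly increasing. I would phrase $\omega_\ell$ as the infimum of $\omega$ for which the $\ell$-th branch is defined (possibly $0$ when $\ell = 1$ and the well is deep enough, positive otherwise, matching a threshold / Weyl-type condition).

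For surjectivity onto the full interval $(1/C_\infty, 1/C_0)$ I would analyze the two endpoints. As $\omega \downarrow \omega_\ell$ the branch value $y_\ell(\omega)$ tends to $1/C_\infty$: this is the threshold where $\nu_\infty \to 0$, the wave delocalizes into the bottom half-space, and the dispersion relation degenerates — I would check this by a careful expansion of $G(\omega,y)$ near $y = 1/C_\infty$ (the $\sqrt{y^2 - 1/C_\infty^2}$ branch point is exactly what forces $y_\ell \to 1/C_\infty^+$). At the other end, $\omega \to +\infty$: here the high-frequency behavior concentrates the eigenfunction near the minimum of $C$, the quantity $\omega\bar\nu_0(y)$ with $\bar\nu_0 = \sqrt{y^2 - 1/C_0^2}$ must stay bounded for a solution to persist, forcing $y_\ell(\omega) \to 1/C_0^-$; a matching asymptotic analysis of $G$ (or of the Weyl-type counting function, which the introduction flags as requiring direct computation due to the discontinuities) supplies the rate. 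Together with continuity and strict monotonicity, the intermediate value theorem gives bijectivity onto $(1/C_\infty, 1/C_0)$. I expect the main obstacle to be making the endpoint limits fully rigorous — in particular controlling the branch uniformly near $y = 1/C_\infty$ where the propagator's $\nu_\infty$-dependence is non-smooth, and proving the high-frequency limit $y_\ell(\omega)\to 1/C_0$ without a priori monotonicity of $C$, since then the ``well'' at the minimum of $C$ need not be connected and one must argue that the deepest well eventually dominates the count.
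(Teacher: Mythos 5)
Your core monotonicity step is essentially the paper's: the paper also treats $k_\ell(\omega)^2$ as a generalized eigenvalue of $-\partial_z\mu\partial_z-\omega^2\rho$ relative to the weight $\mu$ and differentiates the Rayleigh-quotient identity (a Feynman--Hellmann computation equivalent to yours, with the same strict inequality coming from $\normNS{\sqrt{\mu}\,\phi'}_2^2>0$), and your sign bookkeeping for $\partial_\omega$ and $\partial_y$ is correct. But there are two genuine gaps. First, your regularity argument is incomplete: whether you run the analytic implicit function theorem on the dispersion function $G$ or invoke perturbation theory for the eigenvalue family, you need to know that for each fixed $\omega$ the eigenvalues are \emph{simple} and \emph{isolated (finite in number)}. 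You assert simplicity of the crossings but prove neither; the paper devotes Corollary~\ref{simplicity_k} (simplicity, extracted from $\rank \mathbb{M}_n \geq 2n-1$ in the determinant derivation) and Proposition~\ref{Lemma_finite_number_nPlus1_layers} (finiteness, via a holomorphic extension of the dispersion function to a slit domain and the Identity Theorem) to exactly this, before Proposition~\ref{Prop_regularity_nPlus1_layers} can apply Kato--Rellich. Without isolatedness the perturbation-theoretic route does not get off the ground, and without a direct computation of $\partial_y G$ along the zero set the IFT route is circular (you use monotonicity to get non-degeneracy and differentiability to get monotonicity).

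Second, and more seriously, the surjectivity endpoint $k_\ell(\omega)/\omega \to 1/C_0$ as $\omega\to+\infty$ is not proved — you flag it as ``the main obstacle'' and offer only the heuristic that the deepest well dominates. This is precisely where the paper works hardest: it shows (Proposition~\ref{Prop_infinitely_many_zeros_close_to_1overC0}, via Lemma~\ref{Lemma_zeros_ftilde_any_y_nPlus1_layers} for $n\geq3$ and separate arguments for $n=1,2$) that for every $y\in[1/\widetilde{C}_2,1/C_0)$ the function $\omega\mapsto\tilde{f}_n(\omega,y)$ has a sequence of zeros diverging to $+\infty$, by a careful analysis of the product of transfer matrices, and then combines this with the no-crossing property to force every limit $L_\ell^+$ to equal $1/C_0$. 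Even the paper can only do this for $n\geq3$ under the extra hypothesis $C_0<\widetilde{C}_2$ (carried explicitly in Theorem~\ref{Thm_monotonicity_nPlus1_layers}), which confirms that the non-connected-well scenario you worry about is not a routine matter. You also do not address why a branch cannot terminate at a finite $\omega$ (the paper's $\omega_\ell^+=+\infty$ step, handled by continuity of $\tilde{f}_n$ and maximality of the interval of definition). As written, the proposal establishes increasingness on the interval of existence but not analyticity, not bijectivity, and not the global structure of the branches.
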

The precise definition of $k_\ell$'s will be given later.
Graphically, this can be seen in the numerical simulations in Figure~\ref{Fig_simu_1_6Plus1_layers_intro}, where each colored curve corresponds to one $\ell$ and the $\omega_\ell$'s are the values of $\omega$ at which the curve ``starts'' (with value $1/C_\infty$).

The rest of our main results are concerned with recovering the parameters of the medium. We first have the following immediate consequence of Theorem~\ref{Thm_monotonicity_nPlus1_layers_intro}.
\begin{corollary}[Recovering $C_0$ and $C_\infty$]\label{Cor_recover_C0_Cinfty_nPlus1_layers}
    Let $n\geq1$. With the notations of Theorem~\ref{Thm_monotonicity_nPlus1_layers_intro}, for all $\ell\geq 1$, we have
    \begin{equation}
        \frac{1}{C_0} = \sup\limits_{\omega>\omega_\ell} \frac{k_\ell(\omega)}{\omega} = \lim\limits_{\omega\to+\infty} \frac{k_\ell(\omega)}{\omega} \quad \textrm{ and } \quad \frac{1}{C_\infty} = \inf\limits_{\omega>\omega_\ell} \frac{k_\ell(\omega)}{\omega} = \lim\limits_{\omega\to\omega_\ell} \frac{k_\ell(\omega)}{\omega}\,.
    \end{equation} 
\end{corollary}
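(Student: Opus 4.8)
The plan is to deduce the corollary directly from Theorem~\ref{Thm_monotonicity_nPlus1_layers_intro}, using only the elementary fact that a monotone function on an interval admits one-sided limits at the endpoints of that interval, equal respectively to the infimum and the supremum of its range. Fix $\ell\geq1$ and write $f_\ell(\omega):=k_\ell(\omega)/\omega$; by Theorem~\ref{Thm_monotonicity_nPlus1_layers_intro} this is an increasing bijection from $(\omega_\ell,+\infty)$ onto the open interval $(1/C_\infty,1/C_0)$.

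First I would note that, since $f_\ell$ is increasing, $\sup_{\omega>\omega_\ell} f_\ell(\omega)=\lim_{\omega\to+\infty} f_\ell(\omega)$ and $\inf_{\omega>\omega_\ell} f_\ell(\omega)=\lim_{\omega\to\omega_\ell^+} f_\ell(\omega)$, both limits existing precisely by monotonicity and boundedness of the range. Next, since $f_\ell$ is onto $(1/C_\infty,1/C_0)$, the supremum of its range is $1/C_0$ and the infimum is $1/C_\infty$. Combining the two observations yields the four equalities in the statement. As the argument is valid for every $\ell\geq1$ and the values $1/C_0$, $1/C_\infty$ do not depend on $\ell$, the recovery formulas follow as stated.

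I do not anticipate any real obstacle: the corollary is essentially a restatement of the surjectivity and monotonicity parts of Theorem~\ref{Thm_monotonicity_nPlus1_layers_intro}. The only point worth spelling out is that the extrema are suprema and infima rather than maxima and minima, so that the limiting values $1/C_0$ and $1/C_\infty$ are approached but never attained on $(\omega_\ell,+\infty)$ — a direct consequence of bijectivity onto an \emph{open} interval. One might additionally emphasize the practical reading: a single observed branch $k_\ell$, followed over all frequencies above its cutoff $\omega_\ell$, already determines both limiting speeds $C_0$ and $C_\infty$.
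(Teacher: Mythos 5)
Your proposal is correct and follows exactly the route the paper intends: the paper states the corollary as an ``immediate consequence'' of Theorem~\ref{Thm_monotonicity_nPlus1_layers_intro} without further proof, and your argument --- monotonicity gives sup/inf as one-sided limits, surjectivity onto the open interval $(1/C_\infty,1/C_0)$ identifies those limits --- is precisely the intended justification. Nothing is missing.
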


Our second main result concerns Weyl's law and is a complete result for $n=1,2$ but a partial one for $n\geq3$, in which case we conjecture the complete result based on a formal application of Weyl's law.
These results are concerned with the asymptotics, for any $\omega>0$ and $y\in(\omega/C_\infty, \omega/C_0)$, of the number $N(\omega, y)$ of branches $k_\ell(\omega)/\omega$ that are above or equal to $y$.
\begin{definition}\label{Def_number_branches_above_nPlus1_layers}
    Let $n\geq1$. Let $\omega>0$ and $y\in(1/C_\infty, 1/C_0)$. Define   
    \begin{equation} 
        N(\omega, y) := \# \left\{ \ell\geq1 \,:\, \frac{k_\ell(\omega)}{\omega} \geq y \right\} = \max \left\{ \ell\geq1 \,:\, \frac{k_\ell(\omega)}{\omega} \geq y > \frac{k_{\ell+1}(\omega)}{\omega} \right\}.
    \end{equation}
\end{definition}
In this definition, we take the convention $k_\ell(\omega)=-\infty$ if $k_\ell$ is undefined at $\omega$. Note that, due to the monotonicity of the $\frac{k_\ell(\omega)}{\omega}$'s (Theorem~\ref{Thm_monotonicity_nPlus1_layers_intro}), $\omega\mapsto N(\omega, y)$ is nondecreasing for any fixed $y\in(1/C_\infty, 1/C_0)$.

In order to state concisely our result and conjecture, we reorder the $C_j$'s as well as the associated parameters.
\begin{definition}\label{Def_ordered_Cs_nPlus1_layers}
    Let $n\geq1$. Define $\{\widetilde{C}_j\}_{1\leq j\leq n+1}$ as the nondecreasing reordering of the sequence $\{C_j\}_{1\leq j\leq n+1}$.
\end{definition}    
There exists a permutation $\sigma$ of $\llbracket 1, n+1 \rrbracket$ s.t.\ $\widetilde{C}_j = C_{\sigma(j)}$, and we define the sequences $\{\tilde{\nu}_j\}_{1\leq j\leq n+1}$ and $\{\widetilde{T}_j\}_{1\leq j\leq n+1}$ by $\tilde{\nu}_j = \bar{\nu}_{\sigma(j)}$ and $\widetilde{T}_j = T_{\sigma(j)}$ for $j\in\llbracket 1, n+1 \rrbracket$.

Notice that $C_{n+1} = C_\infty > \widetilde{C}_1=C_0$ for $n\geq1$ and $\widetilde{C}_1 = C_1$ for $n=1$.

We are now able to state our main result on Weyl's law in our setting (for which we recall that standard formulae are not available since $\mu$ and $\rho$ are discontinuous).
\begin{proposition}\label{Prop_asymptotics_N}
    Let $n=1$. Then, for $y\in[1/C_\infty, 1/C_0)$, as $\omega$ goes to $+\infty$, we have
    \begin{equation}
        N(\omega, y) \sim \frac{\omega}{\pi} |\tilde{\nu}_1(y)| \widetilde{T}_1\,.
    \end{equation}   
    Let $n=2$. Then, for $y\in[1/C_\infty, 1/C_0)$, as $\omega$ goes to $+\infty$, we have
    \begin{equation}
        \left\{
        \begin{aligned}
            &N(\omega, y) \sim \frac{\omega}{\pi} |\tilde{\nu}_1(y)| \widetilde{T}_1\,, \qquad &\text{if } y\in[1/\widetilde{C}_2, 1/C_0)\,, \\
            &N(\omega, y) \sim \frac{\omega}{\pi} \left( |\tilde{\nu}_1(y)| \widetilde{T}_1 + |\tilde{\nu}_2(y)| \widetilde{T}_2 \right), \qquad &\text{if } y\in[1/C_\infty, 1/\widetilde{C}_2)\,.
        \end{aligned}
        \right.
    \end{equation}
    Let $n\geq3$ and assume $C_0 < \widetilde{C}_2$. Then, for $y\in[1/\widetilde{C}_2, 1/C_0)$, as $\omega$ goes to $+\infty$, we have
    \begin{equation}
        N(\omega, y) \sim \frac{\omega}{\pi} |\tilde{\nu}_1(y)| \widetilde{T}_1\,.
    \end{equation}
\end{proposition}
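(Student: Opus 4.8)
The plan is to recast $N(\omega,y)$ (\cref{Def_number_branches_above_nPlus1_layers}), which is defined from the branches of \cref{Thm_monotonicity_nPlus1_layers_intro}, as an eigenvalue-counting function and then to run a Dirichlet--Neumann bracketing over the layers. Dividing the first line of \eqref{Problem_equations_new} by $\mu$ and moving the $\omega^2$-term to the left, $(\omega,\omega y')$ carries a Love wave if and only if $-(\omega y')^2$ is an eigenvalue of the self-adjoint operator
\begin{equation*}
    \mathcal B_\omega\;:=\;-\frac1\mu\,\frac{\di}{\di z}\!\left(\mu\,\frac{\di}{\di z}\,\cdot\,\right)-\frac{\omega^2}{C^2}
\end{equation*}
on $L^2\big((0,+\infty);\mu\,\di z\big)$ with a Neumann condition at $0$, the continuity conditions $\phi\in\mathcal C$ and $\mu\phi'\in\mathcal C$ being automatically encoded in the form and operator domains. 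Since $C\to C_\infty$ at infinity, the essential spectrum of $\mathcal B_\omega$ is $[-\omega^2/C_\infty^2,+\infty)$, and the quadratic form $\int\mu|\phi'|^2-\omega^2\int\rho|\phi|^2$ (with $\rho\le\mu/C_0^2$) shows $\mathcal B_\omega\ge-\omega^2/C_0^2$; hence the Love wavenumbers at a fixed $\omega$ are in bijection with the eigenvalues of $\mathcal B_\omega$ in $[-\omega^2/C_0^2,-\omega^2/C_\infty^2)$, which are finitely many and simple (Sturm--Liouville). Writing $\mathsf N(A,E):=\#\{\lambda\in\operatorname{spec}A\cap(-\infty,E]\}$ and using that, by \cref{Thm_monotonicity_nPlus1_layers_intro}, the branches are monotone --- hence pairwise disjoint and consistently ordered --- one gets $N(\omega,y)=\mathsf N(\mathcal B_\omega,-\omega^2y^2)$ for $y\in(1/C_\infty,1/C_0)$; the endpoint $y=1/C_\infty$ is then handled by a limiting argument.

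Imposing Dirichlet, resp.\ Neumann, conditions at all interfaces $H_2,\dots,H_{n+1}$ decouples $\mathcal B_\omega$ into $\bigoplus_{j=1}^{n+1}\mathcal B_{\omega,j}^{D}$, resp.\ $\bigoplus_{j=1}^{n+1}\mathcal B_{\omega,j}^{N}$, and the min--max principle yields
\begin{equation*}
    \sum_{j=1}^{n+1}\mathsf N\big(\mathcal B_{\omega,j}^{D},-\omega^2y^2\big)\ \le\ N(\omega,y)\ \le\ \sum_{j=1}^{n+1}\mathsf N\big(\mathcal B_{\omega,j}^{N},-\omega^2y^2\big),
\end{equation*}
the two sides differing by at most $n=O(1)$. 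On the semi-infinite layer $\mathcal B_{\omega,n+1}^{\bullet}$ has spectrum $[-\omega^2/C_\infty^2,+\infty)$ and contributes $0$ because $y>1/C_\infty$; on a finite layer $j$ it equals $-\partial_z^2-\omega^2/C_j^2$ on an interval of length $T_j$, with eigenvalues $(m\pi/T_j)^2-\omega^2/C_j^2$ for $m\ge1$ (Dirichlet), resp.\ $m\ge0$ (Neumann), so the number of them in $(-\infty,-\omega^2y^2]$ is $0$ when $C_j\ge1/y$ and equals $\frac{\omega T_j}{\pi}\sqrt{C_j^{-2}-y^2}+O(1)=\frac{\omega}{\pi}|\bar\nu_j(y)|\,T_j+O(1)$ when $C_j<1/y$ (see \eqref{Def_nu_bar}). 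Summing and passing to the reordering of \cref{Def_ordered_Cs_nPlus1_layers} gives, for every $n\ge1$ and every $y\in[1/C_\infty,1/C_0)$,
\begin{equation*}
    N(\omega,y)=\frac{\omega}{\pi}\sum_{i\,:\,\widetilde C_i<1/y}|\tilde\nu_i(y)|\,\widetilde T_i+O(1)\qquad(\omega\to+\infty),
\end{equation*}
which is exactly the stated asymptotics: for $y\ge1/\widetilde C_2$ only the slowest layer ($i=1$, speed $C_0$) satisfies $\widetilde C_i<1/y$ --- this settles $n=1$, the first alternative of $n=2$, and the case $n\ge3$ --- whereas for $n=2$ with $y\in[1/C_\infty,1/\widetilde C_2)$ exactly the two slowest layers ($i=1,2$) do, producing the two-term formula. (The same computation in fact produces the conjectured formula for all $n$.)

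The genuinely delicate step is the identification $N(\omega,y)=\mathsf N(\mathcal B_\omega,-\omega^2y^2)$ of the first paragraph --- that the branches $k_\ell$ exhaust the Love wavenumbers, never collide, and are indexed so that ``$k_\ell(\omega)/\omega\ge y$'' selects precisely the eigenvalues $\le-\omega^2y^2$ --- together with the behaviour at the essential-spectrum threshold $y=1/C_\infty$; this rests on \cref{Thm_monotonicity_nPlus1_layers_intro} and on the simplicity of half-line Sturm--Liouville eigenvalues. A more hands-on alternative counts directly the solutions of the secular (dispersion) equation: for $n=1$ it reduces to $\tan\!\big(\omega|\bar\nu_1(y')|T_1\big)=g(y')$ with $g(y')=\frac{\mu_2}{\mu_1}\,\bar\nu_2(y')/|\bar\nu_1(y')|$ positive and increasing in $y'$, so, since the left-hand side sweeps $\frac{\omega}{\pi}|\bar\nu_1(y)|T_1+O(1)$ full monotone branches of $\tan$ as $y'$ runs over $[y,1/C_0)$ and $g$ is monotone, the number of solutions is $\frac{\omega}{\pi}|\bar\nu_1(y)|T_1+O(1)$; for $n=2$ one gets instead a relation $P(\tan\theta_1,\tan\theta_2)=0$ with $\theta_i=\omega|\bar\nu_{\sigma(i)}(y')|\,\widetilde T_i$. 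Counting the zeros of such a multi-frequency secular equation --- burdened, for $n\ge3$, by further slowly varying coefficients coming from the intervening exponentially decaying layers --- is what blocks a direct proof beyond $n=2$, and is the reason the general statement is only conjectured.
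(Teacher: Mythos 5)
Your argument is correct, but it is not the paper's: the paper converts $N(\omega,y)$, via the monotone branch structure of Theorem~\ref{Thm_monotonicity_nPlus1_layers}, into the number of zeros of the secular function $\omega'\mapsto\tilde f_n(\omega',y)$ on $(0,\omega]$ and then counts those zeros by hand (one $\tan$-equation for $n=1$ in Section~\ref{Section_asymptotics_N_for_n_equal_1}, a two-frequency case analysis for $n=2$ in Proposition~\ref{Prop_zeros_ftilde_any_y_2Plus1_layers}, and Lemma~\ref{Lemma_zeros_ftilde_any_y_nPlus1_layers} for the partial $n\geq3$ result), whereas you work at fixed $\omega$ in the spatial variable and run Dirichlet--Neumann bracketing at the interfaces. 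Your key identification $N(\omega,y)=\mathsf N(\mathcal B_\omega,-\omega^2y^2)$ for $y>1/C_\infty$ needs only the fixed-$\omega$ statements (Lemma~\ref{Alt_Def_kell_nPlus1_layers}, Proposition~\ref{nPlus1_layers_equivalence_kell_Love_waves}, Corollary~\ref{simplicity_k}), not the branch monotonicity, and the layerwise eigenvalue counts and the vanishing contribution of the half-line piece are as you say; the only places where you are terse are the threshold value $y=1/C_\infty$ (where the decoupled half-line operator's essential spectrum sits exactly at the counting level, so one should count eigenvalues strictly below $-\omega^2/C_\infty^2$ and absorb the at most one zero $k_\ell(\omega)=\omega/C_\infty$ into the $O(1)$) and the routine verification that the transmission conditions of~\eqref{Problem_equations_new} are exactly the operator domain of the form realization of $\mathcal B_\omega$ on $L^2(\mu\,\di z)$. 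What each approach buys: the paper's zero-counting yields finer, non-asymptotic information (interlacing and location of the zeros of $\tilde f_n(\cdot,y)$, which is reused both in the proof of Theorem~\ref{Thm_monotonicity_nPlus1_layers} via Proposition~\ref{Prop_infinitely_many_zeros_close_to_1overC0} and in the inversion procedure of Section~\ref{Section_2Plus1_layers} to order the layers), at the price of an intractable multi-frequency secular equation for $n\geq3$; your bracketing gives the Weyl asymptotics with a uniform $O(1)$ error for every $n$ and every $y\in[1/C_\infty,1/C_0)$, i.e., it would in fact establish the paper's Conjecture, not just Proposition~\ref{Prop_asymptotics_N} --- which also makes the closing sentence of your write-up (attributing the conjectural status to the difficulty of the secular equation) inconsistent with your own main argument, and is all the more reason to write out the two terse steps above in full.
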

In the case $n\geq3$, we conjecture the following natural extension to the whole interval $[1/C_\infty, 1/C_0)$ and without the assumption $C_0 < \widetilde{C}_2$ (that is, allowing several of the $C_j$'s to be equal to $C_0:=\min_j C_j$).
\begin{conjecture}
    Let $n\geq3$ and $y\in[1/C_\infty, 1/C_0)$. Then, as $\omega$ goes to $+\infty$ we have
    \begin{equation}
        N(\omega, y) \sim \frac{\omega}{\pi} \sum_{p=1}^{j} |\tilde{\nu}_p(y)| \widetilde{T}_p\,, \qquad \text{if } y\in[1/\widetilde{C}_{j+1}, 1/\widetilde{C}_j)\,.
    \end{equation}
\end{conjecture}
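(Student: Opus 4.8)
The plan is to recast the problem, for each fixed $\omega$, as an eigenvalue-counting problem for a one-dimensional Schrödinger-type operator with piecewise-constant coefficients, and then to apply a Dirichlet--Neumann bracketing at the interfaces. On the weighted space $L^2((0,+\infty);\mu\di z)$ I would introduce the self-adjoint operator $\mathcal{A}_\omega$, formally $-\tfrac1\mu\partial_z(\mu\partial_z\,\cdot)-\omega^2 C^{-2}$ with a Neumann condition at $z=0$, defined through the closed, bounded-below quadratic form $q_\omega(\phi)=\int_0^{+\infty}\mu|\phi'|^2\di z-\omega^2\int_0^{+\infty}\rho|\phi|^2\di z$ with form domain $H^1((0,+\infty))$. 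Since the form domain is exactly $H^1$, an element of the operator domain automatically inherits the continuity of $\phi$ and of $\mu\phi'$ across the $H_j$'s, the decay at $+\infty$, and $\phi'(0)=0$; hence $-k^2$ is an eigenvalue of $\mathcal{A}_\omega$ precisely when a Love wave exists at $(\omega,k)$. One checks $\mathcal{A}_\omega\geq-\omega^2/C_0^2$ and, since only the last (semi-infinite) layer matters at infinity, $\sigma_{\mathrm{ess}}(\mathcal{A}_\omega)=[-\omega^2/C_\infty^2,+\infty)$; moreover the perturbation away from the last layer is compactly supported, so $\mathcal{A}_\omega$ has finitely many (simple) eigenvalues in $[-\omega^2/C_0^2,-\omega^2/C_\infty^2)$, which are exactly the $-k_\ell(\omega)^2$. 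Consequently, for $y\in[1/C_\infty,1/C_0)$,
\[
N(\omega,y)=\#\big\{\text{eigenvalues of }\mathcal{A}_\omega\text{ that are}\leq-y^2\omega^2\big\}.
\]

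With $h:=1/\omega$, the rescaled operator $h^2\mathcal{A}_\omega=-h^2\tfrac1\mu\partial_z(\mu\partial_z\,\cdot)-C^{-2}$ is semiclassical and $N(\omega,y)$ is its number of eigenvalues $\leq-y^2$. I would then bracket at the $n$ finite interfaces $H_2<\dots<H_{n+1}$, decomposing $(0,+\infty)$ into the finite layers $(H_j,H_{j+1})$ and the half-line $(H_{n+1},+\infty)$. On the half-line the decoupled operator is $-h^2\partial_z^2-C_\infty^{-2}$, with spectrum $[-C_\infty^{-2},+\infty)$ and no eigenvalues, so (as $-y^2\leq-C_\infty^{-2}$) it contributes $0$ for both boundary conditions. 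On the $j$-th finite layer the decoupled operator is unitarily equivalent to $-h^2\partial_z^2-C_j^{-2}$ on an interval of length $T_j$ with Dirichlet (resp.\ Neumann) conditions, whose eigenvalues are $(\pi h m/T_j)^2-C_j^{-2}$ for $m\geq1$ (resp.\ $m\geq0$); counting those $\leq-y^2$ gives $O(1)$ if $C_j\geq1/y$, and $\frac{T_j}{\pi h}\sqrt{C_j^{-2}-y^2}+O(1)=\frac{\omega}{\pi}T_j|\bar\nu_j(y)|+O(1)$ if $C_j<1/y$ (the Dirichlet and Neumann counts differing by $O(1)$). Min-max then yields $\sum_j N_j^{\mathrm{Dir}}\leq N(\omega,y)\leq\sum_j N_j^{\mathrm{Neu}}$, and as there are only finitely many layers the $O(1)$ errors sum to an $O(1)$. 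Finally, for $y\in[1/\widetilde C_{j+1},1/\widetilde C_j)$ one has $1/y\in(\widetilde C_j,\widetilde C_{j+1}]$, so $\{p:\widetilde C_p<1/y\}=\{1,\dots,j\}$; rewriting the layer sum through the permutation $\sigma$ of Definition~\ref{Def_ordered_Cs_nPlus1_layers} gives $N(\omega,y)=\frac{\omega}{\pi}\sum_{p=1}^{j}|\tilde\nu_p(y)|\widetilde T_p+O(1)$, and since this main term tends to $+\infty$ (there is at least the layer $p=1$, as $\widetilde C_1=C_0<1/y$), this is the claimed equivalent. The case in which $C_0$ is attained by several layers is automatically included, the lowest intervals $[1/\widetilde C_{j+1},1/\widetilde C_j)$ corresponding to those $j$ being empty.

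The step I expect to be the main obstacle — and the likely reason for the conjectural status — is making the count genuinely rigorous in the multi-oscillatory-layer regime. Within the elementary, dispersion-relation framework used for $n=1,2$, the obstruction is that the $(n+1)$-layer dispersion determinant is a mixture of $\cos$ and $\sin$ factors over the layers with $C_j<1/y$ and of $\cosh,\sinh$ factors over those with $C_j\geq1/y$, and the interference among several oscillatory layers makes a direct sign-change count delicate. The operator-theoretic route above sidesteps this, but then one must carefully justify the identification of $N(\omega,y)$ with the eigenvalue count of $\mathcal{A}_\omega$ (consistency with the later definition of the $k_\ell$'s, simplicity, completeness of the family of branches) and handle the endpoint $y=1/C_\infty$, where $N(\omega,1/C_\infty)$ counts all eigenvalues below the essential spectrum (the bracketing still applies there, since the semi-infinite layer contributes nothing). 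Both points are not hard in principle but must be handled with the discontinuities of $\mu,\rho$ in mind, for which the usual smooth-coefficient Weyl machinery does not apply.
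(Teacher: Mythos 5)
The statement you are addressing is left as a \emph{Conjecture} in the paper: there is no proof to compare against. The authors establish Proposition~\ref{Prop_asymptotics_N} only for $n=1,2$ and, for $n\geq3$, only on $[1/\widetilde{C}_2,1/C_0)$ under the extra hypothesis $C_0<\widetilde{C}_2$ (Lemma~\ref{Lemma_zeros_ftilde_any_y_nPlus1_layers}), in every case by elementary analysis of the dispersion relation: they write $\tilde f_n(\cdot,y)$ as $a(\omega)\cos[\omega|\bar\nu_k(y)|T_k]+b(\omega)\sin[\omega|\bar\nu_k(y)|T_k]$ with controlled signs of $a,b$, which is exactly what fails when two or more layers are oscillatory at level $y$ — the obstruction you correctly identify. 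Your operator-theoretic route is therefore genuinely different, and it is well suited to the problem: Dirichlet--Neumann bracketing at $H_2,\dots,H_{n+1}$ is insensitive to the discontinuities of $\mu,\rho$ precisely because the decoupling is performed \emph{at} the discontinuities, each decoupled piece has constant coefficients with explicit spectrum, and the interference between oscillatory layers is absorbed into an $O(1)$ error over finitely many layers. The identification of $N(\omega,y)$ with the eigenvalue count of $\mathcal{A}_\omega$ is also sound up to an innocuous additive $1$ (Definition~\ref{Def_kell_nPlus1_layers} admits the zero $k_\ell(\omega)=\omega/C_\infty$, which is not an eigenvalue), and the reduction to the reordered sum over $\{p:\widetilde{C}_p<1/y\}=\llbracket1,j\rrbracket$ is correct. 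Written out carefully, this would prove the conjecture and, as you note, remove the hypothesis $C_0<\widetilde{C}_2$; it would also subsume the paper's $n=1,2$ computations.

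The one step that is wrong as written is the endpoint $y=1/C_\infty$, which the conjecture includes. There the Neumann upper bound is vacuous: $-\omega^2/C_\infty^2$ is the bottom of the essential spectrum of the decoupled half-line piece, so that piece contributes \emph{infinitely many} min--max values $\leq-\omega^2/C_\infty^2$; ``the semi-infinite layer contributes nothing'' is true of its eigenvalues, but min--max compares min--max values, not eigenvalue counts, at the bottom of the essential spectrum. The fix is short but must be made: show that $-\omega^2/C_\infty^2$ is not an eigenvalue of $\mathcal{A}_\omega$ (an $L^2$ eigenfunction there would be affine, hence zero, on the last layer, hence zero everywhere by ODE uniqueness), so that $N(\omega,1/C_\infty)$ equals $\lim_{y\searrow1/C_\infty}N(\omega,y)$ up to an additive $1$; then apply the bracketing for each fixed $y>1/C_\infty$ and let $y\searrow1/C_\infty$ at fixed $\omega$, using that the right-hand bounds are continuous in $y$. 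With that supplement, and a justification of $\sigma_{\mathrm{ess}}(\mathcal{A}_\omega)=[-\omega^2/C_\infty^2,+\infty)$ and of the form-domain identification in the presence of the weight $\mu$, the argument is complete.
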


Under the assumption that $C_{n+1}$ is the largest value taken by the function~$C$, i.e., $C_j < C_\infty = C_{n+1}$ for all $\llbracket 1, n \rrbracket$, and that the~$C_j$'s are pairwise distinct, these asymptotics allow to fully determine $C$ as well as the~$T_j$'s (hence the~$H_j$'s). If we only assume that the~$C_j$'s are pairwise distinct, then all the values $C_j<C_\infty$ can be recovered, as well as the associated~$T_j$'s. Finally, if the ``pairwise distinct'' assumption is lifted, one can still recover the values~$C_j < C_\infty$ but only the sums of the thicknesses of the layers sharing the value~$C_j$. 

Indeed, the values $C_j$ can be extracted from empirical data (dispersion curves of surface Love waves can be obtained from earthquakes signals or ambient noise signals as discussed in the introduction). They are the horizontal lines where the ``density'' of branches of frequency--wavenumber couples of the Love waves diverges as the frequency goes to infinity ---see Appendix~\ref{Appendix_simulations} for simulated versions of such data. Then, evaluating the number $N$ at each $1/C_j$ when the frequency diverges yields the values of the $T_j$'s (hence of the $H_j$'s).
Finally, for $n=1$ and assuming that $\rho_1$ is known (hence $\mu_1=\rho_1 C_1^2$ too), we additionally determine $\mu_2$ and $\rho_2$.
In practice, least-squares or Bayesian inversion can be applied to noisy or perturbed dispersion curves to estimate the medium parameters in a robust way and to quantify the uncertainty of the estimation~\cite{MosTar-95,ShaRit-02,Tarantola-05}. Our work gives solid foundations to this approach by proving the existence and uniqueness of the least-squares minimum or Bayesian maximum a posteriori.

As a perspective of this work, the authors hope to address similar questions for Rayleigh waves. This could allow to recover all the Lam{\'e} and density parameters of the elastic medium.

\bigskip

\textbf{Organisation of the paper.}
We derive in Section~\ref{Section_derivation_k_ell} the dispersion relation defining (up to a constraint) the existence of Love waves. In Section~\ref{Section_monotonicity_existence_LoveWaves}, we prove Theorem~\ref{Thm_monotonicity_nPlus1_layers}, a detailed version of Theorem~\ref{Thm_monotonicity_nPlus1_layers_intro}. Doing so, we also prove in this section the case $n\geq3$ of Proposition~\ref{Prop_asymptotics_N}, see Lemma~\ref{Lemma_zeros_ftilde_any_y_nPlus1_layers}.

Because we are able to obtain stronger results for the cases of a simple ($n=1$) and of a double ($n=2$) square well, we then focus on these cases. Namely, in Section~\ref{Section_1plus1_layers} we study the simple square well for which all computations can be done explicitly. A direct proof (by implicit function theorem) of smoothness is given, during which we also obtain the explicit formulae of the $\omega_\ell$'s (see Proposition~\ref{Proposition_1plus1layer}), and we show that all the parameters of the medium can be recovered. Moreover, the proof of Weyl's law (Proposition~\ref{Prop_asymptotics_N}) in this case is completed at the end of this section (see Subsection~\ref{Section_asymptotics_N_for_n_equal_1}).
In Section~\ref{Section_2Plus1_layers}, we focus on the case of a double square well for which we prove a stronger version of Proposition~\ref{Prop_asymptotics_N} (see Propositions~\ref{Prop_zeros_ftilde_any_y_2Plus1_layers}--\ref{Prop_asymptotics_N_for_n_equal_2}).

Appendix~\ref{Appendix_simulations} presents numerical simulations. In Appendix~\ref{Derivation_Love_waves_problem} we derive the linear, elastic equation, the continuity conditions, and the boundary condition in~\eqref{Problem_equations_new}. Appendix~\ref{Appendix_nplus1_layers} gives two proofs that we postponed for the readability of the paper. Finally, Appendix~\ref{Appendix_1plus1_layers} presents two additional results for the simple square well.

\section{Characterization of Love waves: dispersion relation and first results}\label{Section_derivation_k_ell}
In this section, we characterize Love waves for the settings that we are considering. This characterization relies on the dispersion relation, which in our context was established in the literature as early as the celebrated work~\cite{Haskell-53} by Haskell, based on Thomson's work~\cite{Thomson-50} describing for the first time the transfer matrix method. Even though this relation is well-known, we detail here its derivation for several reasons. First, for the convenience of the reader and because Haskell's paper~\cite{Haskell-53} being focused on Rayleigh waves (like the one by Thomson), it gives little details on the computations in the case of Love waves. Second, because our derivation is slightly different: it is not per se based on the transfer matrix method even though these matrices appear in our work up to a simple transformation. Third, and more importantly for our results, because our derivation gives as a direct by-product the simplicity of the $k_\ell$’s, see Corollary~\ref{simplicity_k}, which is a key property in some of our later proofs.

\smallskip

Using the form of the solutions on each layer (see the introduction), the boundary condition at $z=0$, the $L^2$-restriction, and the fact that $\nu_{n+1}>0$, we obtain the form of a solution $\phi$:
\[
    \phi(z) =
    \left\{
        \begin{aligned}
            &2 \alpha_1(\omega) \cosh[\nu_1(\omega) z]\,,\\
            &\alpha_j(\omega) e^{-\nu_j(\omega) z} + \beta_j(\omega) e^{+\nu_j(\omega) z}\,,\\
            &\alpha_{n+1}(\omega) e^{-\nu_{n+1}(\omega) z}\,,
        \end{aligned}
        \qquad
        \begin{aligned}
            \textrm{if }\!\!\!\! &&0 &\leq z < H_2\,, \\
            \textrm{if }\!\!\!\! &&H_j &\leq z < H_{j+1}\,, \quad \forall\, j \in \llbracket 2, n \rrbracket\,, \\
            \textrm{if }\!\!\!\! &&H_{n+1} &\leq z < +\infty\,.
        \end{aligned}
    \right.
\]

For this introductory presentation, we assume $k(\omega) / \omega \neq C_j^{-1}$ for all $j$, i.e., $\nu_j(\omega, k)\neq0$, but the remaining cases are treated in Proposition \ref{Prop_recursive_formula_Dn} below.
The frequency--wavenumber couples of the Love waves are the
pairs $(\omega,k)$ for which non-trivial solutions $\phi$ exist.

Omitting the dependencies in $\omega$ for shortness, the $2n$ continuity conditions at the boundaries $\{H_j\}_{2\leq j\leq n+1}$ yield
\[
    \left\{
        \begin{aligned}
             2 \alpha_1 \cosh[ \nu_1 T_1 ] &= \beta_2 e^{+\nu_2 H_2} + \alpha_2 e^{-\nu_2 H_2}\,,\\
             2 \mu_1 \nu_1 \alpha_1 \sinh[ \nu_1 T_1 ] &= \mu_2 \nu_2 \left( \beta_2 e^{+\nu_2 H_2} - \alpha_2 e^{-\nu_2 H_2} \right),\\
             \beta_{j-1} e^{+\nu_{j-1} H_j} + \alpha_{j-1} e^{-\nu_{j-1} H_j} &= \beta_j e^{+\nu_j H_j} + \alpha_j e^{-\nu_j H_j}\,, &\forall\, j \in \llbracket 3, n \rrbracket\,,\\
             \mu_{j-1} \nu_{j-1} \left( \beta_{j-1} e^{+\nu_{j-1} H_j} - \alpha_{j-1} e^{-\nu_{j-1} H_j} \right) &= \mu_j \nu_j \left( \beta_j e^{+\nu_j H_j} - \alpha_j e^{-\nu_j H_j} \right), &\forall\, j \in \llbracket 3, n \rrbracket\,,\\
             \beta_n e^{+\nu_n H_{n+1}} + \alpha_n e^{-\nu_n H_{n+1}} &= \alpha_{n+1} e^{-\nu_{n+1} H_{n+1}}\,,\\
             \mu_n \nu_n \left( \beta_n e^{+\nu_n H_{n+1}} - \alpha_n e^{-\nu_n H_{n+1}} \right) &= - \mu_{n+1} \nu_{n+1} \alpha_{n+1} e^{-\nu_{n+1} H_{n+1}}\,.
        \end{aligned}
    \right.
\]
Denoting $A_j := \mu_j \nu_j$, non-trivial solutions $\phi$ exist if and only if this linear system  has non-zero solutions, which happens if and only if the determinant
\[
    \makebox[\textwidth]{
    \resizebox{0.98\paperwidth}{!}{
        $\begin{vmatrix}
            2 \cosh[ \nu_1 T_1 ] & - e^{-\nu_2 H_2} & -e^{+\nu_2 H_2} & 0 & 0 & \cdots & \cdots & 0 & 0 & 0 \\
            2 A_1 \sinh[ \nu_1 T_1 ] & +A_2 e^{-\nu_2 H_2} & -A_2 e^{+\nu_2 H_2} & 0 & 0 & \cdots & \cdots & 0 & 0 & 0 \\
            0 & +e^{-\nu_2 H_3} & +e^{+\nu_2 H_3} & -e^{-\nu_3 H_3} & -e^{+\nu_3 H_3} & 0 & \cdots & 0 & 0 & 0 \\
            0 & -A_2 e^{-\nu_2 H_3} & +A_2 e^{+\nu_2 H_3} & +A_3 e^{-\nu_3 H_3} & -A_3 e^{+\nu_3 H_3} & 0 & \cdots & 0 & 0 & 0 \\
            0 & 0 & 0 & \hphantom{\ddots\ddots}\ddots & \hphantom{\ddots\ddots}\ddots & \ddots\hphantom{\ddots\ddots\ddots} &  & \vdots & \vdots & \vdots \\
            \vdots & \vdots & \vdots &  & \hphantom{\ddots\ddots}\ddots & \ddots\hphantom{\ddots\ddots\ddots} & \ddots\hphantom{\ddots\ddots\ddots\ddots} & 0 & 0 & 0 \\
            0 & 0 & 0 & \cdots & 0 & +e^{-\nu_{n-1} H_n} & +e^{+\nu_{n-1} H_n} & -e^{-\nu_n H_n} & -e^{+\nu_n H_n} & 0 \\
            0 & 0 & 0 & \cdots & 0 & -A_{n-1} e^{-\nu_{n-1} H_n} & +A_{n-1} e^{+\nu_{n-1} H_n} & +A_n e^{-\nu_n H_n} & -A_n e^{+\nu_n H_n} & 0 \\
            0 & 0 & 0 & \cdots & \cdots & 0 & 0 & +e^{-\nu_n H_{n+1}} & +e^{+\nu_n H_{n+1}} & -e^{-\nu_{n+1} H_{n+1}} \\
            0 & 0 & 0 & \cdots & \cdots & 0 & 0 & -A_n e^{-\nu_n H_{n+1}} & +A_n e^{+\nu_n H_{n+1}} & +A_{n+1} e^{-\nu_{n+1} H_{n+1}}
        \end{vmatrix},$
    }
    }
\]
denoted by $D_n$, is zero. Note that this determinant appears for instance in~\cite[(7)--(8)]{Knopoff-64}, even though under a slightly different form. For clarity, we can write it as
\begin{equation}\label{Formula_D_n}
    D_n = \det \mathbb{M}_n \qquad \text{with} \qquad \mathbb{M}_n :=
    \begin{pNiceMatrix}[margin=5pt]
        L_1^r & R_2 & \mathbb{O}_2 & \mathbb{O}_2 & \mathbb{O}_2 & 0 \\
        0 & L_2 & R_3 & \Ddots & \mathbb{O}_2 & \Vdots \\
        & \mathbb{O}_2 & \Ddots & \Ddots & \mathbb{O}_2 & \\
        \Vdots & \mathbb{O}_2 & \Ddots & L_{n-1} & R_n & 0 \\
        0 & \mathbb{O}_2 & \mathbb{O}_2 & \mathbb{O}_2 & L_n & R_{n+1}^l
    \end{pNiceMatrix},
\end{equation}
where
\begin{align*}
    &\hphantom{
        \left\{
        \vphantom{
            \begin{aligned}
                \begin{pmatrix}
                    +e^{-\nu_j H_{j+1}} & +e^{+\nu_j H_{j+1}} \\
                    -\mu_j \nu_j e^{-\nu_j H_{j+1}} & +\mu_j \nu_j e^{+\nu_j H_{j+1}}
                \end{pmatrix}\\
                \begin{pmatrix}
                    1 \\
                    0
                \end{pmatrix}
            \end{aligned}
        }
        \right.
    }
    \mathbb{O}_2:=
    \begin{pmatrix}
        0 & 0 \\
        0 & 0
    \end{pmatrix},\quad
    L_1^r:= 2 
    \begin{pmatrix}
        \cosh[ \nu_1 T_1 ] \\
        \mu_1 \nu_1 \sinh[ \nu_1 T_1 ]
    \end{pmatrix},
    \intertext{and}
    \forall\, j\geq2,\,
    &\left\{
    \begin{aligned}
        L_j&:=
        \begin{pmatrix}
            +e^{-\nu_j H_{j+1}} & +e^{+\nu_j H_{j+1}} \\
            -\mu_j \nu_j e^{-\nu_j H_{j+1}} & +\mu_j \nu_j e^{+\nu_j H_{j+1}}
        \end{pmatrix}, \\
        R_j&:=
        \begin{pmatrix}
            -e^{-\nu_j H_j} & -e^{+\nu_j H_j} \\
            +\mu_j \nu_j e^{-\nu_j H_j} & -\mu_j \nu_j e^{+\nu_j H_j}
        \end{pmatrix},\\
        L_j^l&:= L_j 
        \begin{pmatrix}
            1 \\
            0
        \end{pmatrix},\quad
        L_j^r:= L_j 
        \begin{pmatrix}
            0 \\
            1
        \end{pmatrix},\quad
        R_j^l:= R_j 
        \begin{pmatrix}
            1 \\
            0
        \end{pmatrix},
        \quad\textrm{ and }\quad
        R_j^r:= R_j 
        \begin{pmatrix}
            0 \\
            1
        \end{pmatrix}.
    \end{aligned}
    \right.
\end{align*}

The first important remark is that the submatrix $\widetilde{\mathbb{M}}_n$ of $\mathbb{M}_n$, where we remove the first row and the last column, is a block (upper) triangular matrix with the blocks on the diagonal being $2 \mu_1 \nu_1 \sinh[ \nu_1 T_1 ]$ and the $L_j$'s. Since $\det L_j = 2 \mu_j \nu_j$, for $j\geq2$, we have
\begin{equation}
    \det \widetilde{\mathbb{M}}_n = 2^n \sinh[ \nu_1 T_1 ] \prod_{j=1}^n \mu_j \nu_j\,.
\end{equation}
Therefore, if the $\nu_j$'s are non-zero, then $\det \widetilde{\mathbb{M}}_n\neq0$ and $\rank \mathbb{M}_n \geq 2n-1$. Actually, we prove this to also hold when some $\nu_j$'s are zero and we show in the following proposition that $D_n$ can be computed recursively.
\begin{proposition}\label{Prop_recursive_formula_Dn}
    Let $n\in\N\setminus\{0\}$ and $D_n=\det \mathbb{M}_n$ be defined in~\eqref{Formula_D_n}. Then,
    \[
        \rank \mathbb{M}_n \geq 2n-1
    \]
    and, if $\nu_{j_i}(\omega)=0$ for $i\in\llbracket1, m\rrbracket$, $\nu_{k_i}(\omega)\neq0$ for $i\in\llbracket1,n-m\rrbracket$, and $\nu_{n+1}(\omega)>0$, then
    \begin{equation}\label{Formula_det_any_n}
        \frac{ (-1)^m e^{\nu_{n+1} H_{n+1}} }{ 2^{n-m} \prod\limits_{i=1}^{n-m} \mu_{k_i} \nu_{k_i} } \frac{D_n}{\prod\limits_{i=1}^m \mu_{j_i}} = \mu_{n+1} \nu_{n+1} P_n + Q_n \in\R\,,
    \end{equation}
    where the $P_n$'s and $Q_n$'s are defined recursively by $P_0=1$, $Q_0=0$, and 
    \begin{equation}\label{Formula_Pn_Qn}
        \begin{pmatrix}P_m \\ Q_m\end{pmatrix} =
        M_m
        \begin{pmatrix}P_{m-1} \\ Q_{m-1}\end{pmatrix} \quad \text{for all } m\in\llbracket1,n\rrbracket\,,
    \end{equation}
    where
    \begin{equation}\label{Def_Mm}
        M_m:=
        \left\{
        \begin{aligned}
            &\begin{pmatrix}
                \cosh[\nu_m T_m] & \sinh[\nu_m T_m] / (\mu_m \nu_m) \\
                \mu_m \nu_m \sinh[\nu_m T_m] & \cosh[\nu_m T_m]
            \end{pmatrix} &\text{if } \nu_m\neq0\,, \\
            &\begin{pmatrix}
                1 & T_m/\mu_m \\
                0 & 1
            \end{pmatrix} &\text{if } \nu_m=0\,.
        \end{aligned}
        \right.
    \end{equation}
\end{proposition}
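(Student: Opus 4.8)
The plan is to reduce $D_n=\det\mathbb{M}_n$ to a single $2\times 2$ determinant assembled from the transfer matrices $M_j$, by first trivialising the block structure of $\mathbb{M}_n$ through right multiplication by an explicit invertible matrix. The computational core is the identity
\[
    L_j R_j^{-1}=-M_j \qquad\text{for all } j\in\llbracket 2,n\rrbracket ,
\]
which says that, up to sign, $M_j$ is the transfer matrix across layer $j$ for the pair $(\phi,\mu\phi')$; I would check it by a short $2\times 2$ multiplication, separately when $\nu_j\ne0$ and when $\nu_j=0$ (with $L_j,R_j,M_j$ read off from the affine solution $\gamma_j+\delta_j z$). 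Alongside it one records that $\det R_j=\det L_j=2\mu_j\nu_j$ (resp.\ $\mu_j$ when $\nu_j=0$), so each $R_j$ is invertible; that $\det M_j=1$ always; and the two boundary identities $L_1^r=2M_1\binom{1}{0}$ and $R_{n+1}^l=e^{-\nu_{n+1}H_{n+1}}\binom{-1}{\mu_{n+1}\nu_{n+1}}$.

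Next I would right multiply $\mathbb{M}_n$ by $\mathcal{E}=\operatorname{diag}\bigl(1,R_2^{-1},\dots,R_n^{-1},1\bigr)$, with the scalars $1$ acting on the width-one column blocks of layers $1$ and $n+1$: in the block row attached to the interface $H_{j+1}$ this replaces $R_{j+1}$ by $I_2$ and $L_j$ by $-M_j$, while $L_1^r$ and $R_{n+1}^l$ are left intact. A sequence of block row operations using the resulting superdiagonal identity blocks then removes all intermediate blocks and, after reordering the column blocks as $\bigl(\text{layers }1,n{+}1;\ \text{layer }2;\ \dots;\ \text{layer }n\bigr)$, brings the matrix to block lower triangular form with diagonal blocks
\[
    \bigl(L_1^r \,\big|\, (M_nM_{n-1}\cdots M_2)^{-1}R_{n+1}^l\bigr),\quad -M_2,\ -M_3,\ \dots,\ -M_n .
\]
Since $\det(-M_j)=\det M_j=1$ and $\det\mathcal{E}=\prod_{j=2}^n(\det R_j)^{-1}$, comparing determinants gives
\[
    D_n=\Bigl(\textstyle\prod_{j=2}^n\det R_j\Bigr)\,\det\bigl(L_1^r \,\big|\, (M_n\cdots M_2)^{-1}R_{n+1}^l\bigr).
\]

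It remains to evaluate the last determinant. Put $N=(M_n\cdots M_2)^{-1}$; the recursion defining $P_m,Q_m$ says $\binom{P_n}{Q_n}=M_nM_{n-1}\cdots M_1\binom{1}{0}$ (as $\binom{P_0}{Q_0}=\binom{1}{0}$), so $L_1^r=2M_1\binom{1}{0}=2N\binom{P_n}{Q_n}$. Factoring $N$ out of both columns and using $\det N=\prod_{j=2}^n(\det M_j)^{-1}=1$,
\[
    \det\bigl(L_1^r \,\big|\, N R_{n+1}^l\bigr)=2\det\bigl(\tbinom{P_n}{Q_n}\,\big|\,R_{n+1}^l\bigr)=2e^{-\nu_{n+1}H_{n+1}}\bigl(\mu_{n+1}\nu_{n+1}P_n+Q_n\bigr),
\]
whence $\mu_{n+1}\nu_{n+1}P_n+Q_n=e^{\nu_{n+1}H_{n+1}}D_n\big/\bigl(2\prod_{j=2}^n\det R_j\bigr)$; substituting $\det R_j=2\mu_j\nu_j$ (resp.\ $\mu_j$) recovers exactly the power of $2$, the product of the $\mu_{k_i}\nu_{k_i}$'s and the product of the $\mu_{j_i}$'s in the statement, and the right-hand side is real because every $M_j$ has real entries even when $\nu_j$ is purely imaginary (then $\cosh(\nu_jT_j)=\cos(|\nu_j|T_j)$, $\mu_j\nu_j\sinh(\nu_jT_j)\in\R$, and so on). The rank estimate is a byproduct of the same reduction: the block lower triangular form gives
\[
    \rank\mathbb{M}_n=\rank\bigl(L_1^r \,\big|\, N R_{n+1}^l\bigr)+\sum_{j=2}^n\rank(-M_j)\ge 1+2(n-1)=2n-1 ,
\]
since $L_1^r\ne0$ (the entries $2\cosh(\nu_1T_1)$ and $2\mu_1\nu_1\sinh(\nu_1T_1)$ cannot vanish together, and $L_1^r=\binom{2}{0}$ when $\nu_1=0$). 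One can also see $\dim\ker\mathbb{M}_n\le1$ directly: if $\mathbb{M}_n v=0$ then $v$ is determined by its first entry $\alpha_1$, because the layer-$1$ data $\binom{2\alpha_1}{0}$ propagates uniquely across each successive layer through the invertible maps encoded in the blocks of $\mathbb{M}_n$.

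The step I expect to be the main obstacle is the careful treatment of the degenerate layers $\nu_j=0$ with $j\in\llbracket 2,n\rrbracket$: there the exponential ansatz on layer $j$ must be replaced by the affine one, the blocks $L_j,R_j$ recomputed, and one must verify that $L_jR_j^{-1}=-M_j$ and $\det M_j=1$ persist with $M_j$ the affine transfer matrix, while $\det R_j$ drops from $2\mu_j\nu_j$ to $\mu_j$. This mismatch between the generic and the affine blocks is exactly what produces the factor $2^{n-m}$ (instead of $2^n$) and the mixed product $\bigl(\prod_i\mu_{k_i}\nu_{k_i}\bigr)\bigl(\prod_i\mu_{j_i}\bigr)$ in the normalisation and, together with the sign conventions chosen for the affine blocks, the overall sign. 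A minor, purely clerical point is to check that the column-block permutation used in the reduction has even sign --- it moves one column past an even number of columns --- so it contributes nothing to the final constant.
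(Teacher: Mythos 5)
Your proposal is correct in substance but follows a genuinely different route from the paper. The paper proves the determinant formula by induction on $n$: it expands $D_{n+1}$ along its last columns, exploits the fact that the two columns carrying $\nu_{n+1}$ coincide up to the substitution $\nu_{n+1}\mapsto-\nu_{n+1}$ to express $\bar D_{n+1}$ through $\bar D_n(\pm\nu_{n+1})$, and then treats each degenerate layer $\nu_j=0$ by a separate cofactor expansion showing that the formula is the limit $\nu_j\to0$ of the generic one. You instead perform a direct, non-inductive block factorization: right-multiplying $\mathbb{M}_n$ by $\operatorname{diag}(1,R_2^{-1},\dots,R_n^{-1},1)$ and using the identity $L_jR_j^{-1}=-M_j$ (which I have checked, and which holds verbatim for the affine blocks as well, since $L_j=W_j(H_{j+1})$, $R_j=-W_j(H_j)$ with $W_j$ the fundamental solution matrix and $W_j(H_{j+1})W_j(H_j)^{-1}=M_j$ in both regimes) reduces $D_n$, up to $\prod_{j=2}^n\det R_j$ and an even permutation sign, to the $2\times2$ determinant $\det\bigl(M_n\cdots M_2L_1^r\,\big|\,R_{n+1}^l\bigr)=2e^{-\nu_{n+1}H_{n+1}}(\mu_{n+1}\nu_{n+1}P_n+Q_n)$. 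This buys a uniform treatment of generic and degenerate layers, makes the transfer matrices appear structurally rather than through bookkeeping, and yields the rank bound as a free byproduct of the same triangularization; the paper's induction, by contrast, requires no matrix inversions and keeps every step at the level of explicit scalar identities, at the price of a long limiting computation for each $\nu_j=0$.

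Two details deserve attention. First, for a degenerate layer one has $\det R_j=\det L_j=-\mu_j$, not $+\mu_j$ as you wrote: this minus sign is exactly the origin of the factor $(-1)^m$ in \eqref{Formula_det_any_n}, so it must be tracked rather than absorbed into unspecified ``sign conventions''. Second, your final constant is $D_n=2\bigl(\prod_{j=2}^n\det R_j\bigr)e^{-\nu_{n+1}H_{n+1}}(\mu_{n+1}\nu_{n+1}P_n+Q_n)$, in which layer $1$ contributes only the factor $2$ and no $\mu_1\nu_1$; this agrees with the paper's own normalization $\widetilde D_n$ (product over $j\in\llbracket2,n\rrbracket$) and with the base case $e^{\nu_2H_2}D_1/2=\mu_2\nu_2P_1+Q_1$, so when you claim to ``recover exactly'' the constants of \eqref{Formula_det_any_n} you should read the index ranges there as running over $\llbracket2,n\rrbracket$; the literal ranges displayed in the proposition would insert a spurious $\mu_1\nu_1$.
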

Here, we used the convention $\prod_{j=k_1}^{k_2} a_j = 1$ if $k_1>k_2$.

This proposition leads us to define $f_n:(0,+\infty)\times\R\to\C$ by
\begin{equation}\label{Def_f_n}
    f_n := \mu_{n+1} \nu_{n+1} P_n + Q_n = \mu_\infty \nu_\infty P_n + Q_n\,,
\end{equation}
where we recall that $\nu_\infty=\nu_{n+1}$, since $C_\infty:=C_{n+1}$, and where we define $\mu_\infty:=\mu_{n+1}$, so that the dispersion relation for Love waves reads
\begin{equation}\label{Dispersion_relation_n}
    f_n(\omega, k) = 0\,.
\end{equation}

As explained, a Love wave existing at $(\omega,k)$ is equivalent to $D_n=0$ for this pair ---i.e., $(\omega,k)$ solves the dispersion relation $f_n(\omega, k) = 0$--- under the constraint $k > \omega/C_\infty$:
\begin{equation}\label{nPlus1_layers_relation_defining_Love_waves_preversion}
    f_n(\omega, k) = 0 \quad \text{ and } \quad k > \omega / C_\infty\,.
\end{equation}

\begin{remark*}
	Our strategy to derive the dispersion relation is different but somewhat related to the transfer matrix method, also known as propagator matrix method, which is well-known in geophysics~\cite{Thomson-50,Haskell-53,GilBac-66,Kennett-83,BucBen-96,Kennett-83-Edition2009}.
	Our matrices $M_m$ are, indeed, closely related to the transfer matrices, derived by Haskell~\cite{Haskell-53} in our context:
	\[
		a_m :=
		\begin{pmatrix}
			\cos[k r_{\beta_m} T_m] & i\frac{\sin[k r_{\beta_m} T_m]}{\mu_m r_{\beta_m}} \\
			i \mu_m r_{\beta_m} \sin[k r_{\beta_m} T_m] & \cos[k r_{\beta_m} T_m]
		\end{pmatrix}
	\]
	(we follow Haskell and Thomson notations ``$a_m$'' for the transfer matrices, which are nowadays often denoted $T_m$ in the literature). Indeed, our matrices $M_m$ defined by~\eqref{Def_Mm} in Proposition~\ref{Prop_recursive_formula_Dn} are, up to a simple transformation, exactly the transfer matrices $a_m$:
	\begin{equation}\label{relation_M_with_transfer_matrix}
		M_m = \begin{pmatrix} 1 & 0 \\ 0 & i k \end{pmatrix} a_m \begin{pmatrix} 1 & 0 \\ 0 & (i k)^{-1} \end{pmatrix}.
	\end{equation}
	
	Moreover, and of course, our dispersion relation is equivalent to that obtained by the transfer matrix method. Indeed, using that the $r_{\beta_m}$'s in Haskell's paper are related to our $\nu_j$'s by the relation $k r_{\beta_m} = i \nu_m$ when $\nu_m\in i\R_-$ and $k r_{\beta_m} = -i \nu_m$ when $\nu_m>0$, we~have
	\[
		a_m =
		\begin{pmatrix}
			\cosh[\nu_m T_m] & \frac{i k}{\mu_m \nu_m} \sinh[\nu_m T_m] \\
			\frac{\mu_m \nu_m}{i k} \sinh[\nu_m T_m] & \cosh[\nu_m T_m]
		\end{pmatrix}
	\]
	and consequently the relation $A_{21} = -\mu_n r_{\beta_n} A_{11}$ obtained by Haskell for Love waves through the transfer matrix method ---equation (9.9) in Haskell's paper---, can be written
	\[
		\mu_n \nu_n A_{11} \mp i k A_{21} = 0\,, \qquad \text{with ``+'' when } \nu_n>0\,.
	\]
	Remembering that for a Love wave to exist at $(\omega, k)$, the $\nu$ associated to the semi-infinite layer ($\nu_{n+1}$ in our paper but $\nu_n$ in Haskell's) must necessarily be positive, the identity determined by Haskell is therefore
	\[
		\mu_n \nu_n A_{11} + i k A_{21} = 0\,.
	\]
	Noticing now that Haskell labeled the layers from $1$ to $n$ while we labeled them from $1$ to $n+1$, this relation is the same identity as our dispersion relation~\eqref{Dispersion_relation_n}:
	\[
		\mu_{n+1} \nu_{n+1} P_n + Q_n = 0\,.
	\]
	Indeed, defining the $2\times2$ matrix $M := \prod_{m=1}^{n} M_m$ and using the relation~\eqref{relation_M_with_transfer_matrix} between $M_m$ and $a_m$, we have
	\[
		M = \begin{pmatrix} 1 & 0 \\ 0 & i k \end{pmatrix} A \begin{pmatrix} 1 & 0 \\ 0 & (i k)^{-1} \end{pmatrix} = \begin{pmatrix} A_{11} & (i k)^{-1} A_{12} \\ i k A_{21} & A_{22} \end{pmatrix},
	\]
	thence $P_n = M_{11} = A_{11}$ and $Q_n = M_{21} = i k A_{21}$ by~\eqref{Formula_Pn_Qn}.
\end{remark*}

Before turning to the proof of Proposition~\ref{Prop_recursive_formula_Dn}, let us continue with the definition of the $k_\ell(\omega)$'s appearing in Theorem~\ref{Thm_monotonicity_nPlus1_layers_intro}.
\begin{definition}[Definition of the $k_\ell$'s]\label{Def_kell_nPlus1_layers}
    Let $n\in\N\setminus\{0\}$ and $\omega>0$. The $k_\ell(\omega)$'s are the (decreasingly ordered) values $k\in\R$ for which $(\omega, k)$ solves the dispersion relation~\eqref{Dispersion_relation_n}.
\end{definition}
Notice that in Definition~\ref{Def_kell_nPlus1_layers}, we did not put a priori restrictions on $k\in\R$. This is because we actually have the folllowing.
\begin{lemma}\label{Alt_Def_kell_nPlus1_layers}
    Definition~\ref{Def_kell_nPlus1_layers} is equivalent to defining the $k_\ell(\omega)$'s as the (decreasingly ordered) values $k\in[\omega / C_\infty, \omega / C_0)$ for which $(\omega, k)$ solves the dispersion relation~\eqref{Dispersion_relation_n}.
\end{lemma}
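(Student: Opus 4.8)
The plan is to establish the nontrivial inclusion $\{k\in\R:f_n(\omega,k)=0\}\subseteq[\omega/C_\infty,\omega/C_0)$, the reverse one being immediate since it merely adds a constraint on $k$. As $f_n(\omega,\cdot)$ depends on $k$ only through the $\nu_j$'s of~\eqref{Def_nu}, hence through $k^2$, it is even in $k$, and since $k$ is a (horizontal) wavenumber we may restrict to $k\ge0$; so it suffices to show that $f_n(\omega,k)=0$ with $k\ge0$ forces $\omega/C_\infty\le k<\omega/C_0$. Two elementary facts about the matrices $M_m$ of~\eqref{Def_Mm} carry the argument: for every real $\omega>0$ and $k\in\R$, whatever the sign of $C_m^2k^2-\omega^2$, the entries of $M_m$ are real (when $\nu_m\in\ii\R_{<0}$ the hyperbolic functions become trigonometric ones) and $\det M_m=1$. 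Hence $(P_n,Q_n)\transp=M_n\cdots M_1\,(1,0)\transp$ is a real vector and $M_n\cdots M_1$ is invertible.

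Suppose first $0\le k<\omega/C_\infty$. Then $\nu_\infty=\nu_{n+1}=-\ii\eta$ with $\eta>0$, so, $P_n$ and $Q_n$ being real, $f_n=\mu_\infty\nu_\infty P_n+Q_n$ satisfies $\Re f_n=Q_n$ and $\Im f_n=-\mu_\infty\eta P_n$. Thus $f_n(\omega,k)=0$ would force $P_n=Q_n=0$, i.e.\ $M_n\cdots M_1\,(1,0)\transp=0$, contradicting the invertibility of $M_n\cdots M_1$. So $f_n$ has no zero on $[0,\omega/C_\infty)$.

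Suppose now $k\ge\omega/C_0$. Since $C_j\ge C_0$ for all $j$, every $\nu_j$ is real and $\ge0$; moreover $C_\infty>C_0$ gives $k\ge\omega/C_0>\omega/C_\infty$, hence $\nu_\infty>0$ strictly. Then each $M_m$, $1\le m\le n$, has nonnegative entries (strictly positive, away from the $\nu_m=0$ case, using $\cosh\ge1$ and $\cosh,\sinh,\mu_m,\nu_m,T_m>0$), so a one-line induction on~\eqref{Formula_Pn_Qn} gives $Q_m\ge0$ and $P_m\ge P_{m-1}$, whence $P_n\ge P_0=1$ and $Q_n\ge0$. Therefore $f_n=\mu_\infty\nu_\infty P_n+Q_n\ge\mu_\infty\nu_\infty P_n>0$, so $f_n(\omega,k)\neq0$. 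Combining the two cases proves the claim.

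I expect the first case to be the crux. The subtlety is that the dispersion relation $f_n=0$ was derived under the hypothesis $\nu_{n+1}>0$, yet $f_n$ remains perfectly well-defined for $k<\omega/C_\infty$; there its non-vanishing is not a sign phenomenon but a real/imaginary mismatch between the real vector $(P_n,Q_n)$ and the purely imaginary coefficient $\mu_\infty\nu_\infty$ --- the algebraic trace of the fact that no $L^2$-solution survives once the bottom half-space becomes propagative. (The same two bounds can also be read off the self-adjoint eigenvalue problem~\eqref{Problem_equations_new}, whose operator is bounded below by $k^2C_0^2$ and has essential spectrum $[k^2C_\infty^2,+\infty)$; but the algebraic route is self-contained and also accounts for the boundary zero at $k=\omega/C_\infty$.) The second case is just the familiar statement that the slowest Love mode is no faster than the slowest layer, here reduced to the total positivity of the transfer matrices.
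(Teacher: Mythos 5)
Your proof is correct and follows essentially the same route as the paper's: the case $k\ge\omega/C_0$ is excluded by the nonnegativity/monotonicity of $(P_m,Q_m)$ under the transfer matrices (giving $f_n\ge\mu_\infty\nu_\infty>0$), and the case $k<\omega/C_\infty$ by the real/imaginary mismatch forcing $P_n=Q_n=0$, contradicting $\det M_m=1$. Your explicit evenness remark to reduce to $k\ge0$ is a small tidying-up of a point the paper leaves implicit.
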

Note that $f_n$ is real valued on $(0,+\infty)\times[\omega / C_\infty, \omega / C_0)$.
\begin{proof}
    On the one hand, $(\omega, k)$ being a solution to~\eqref{Dispersion_relation_n} implies $\nu_0(\omega, k)\in i\R_-\setminus\{0\}$. Indeed, we would otherwise have $\nu_i(\omega, k)$ for $j\in\llbracket1,n\rrbracket$ and $\nu_{n+1}(\omega, k)>0$. We claim that it implies $P_m\geq1$ and $Q_m\geq0$ for any $m\in\llbracket1,n\rrbracket$. This is because the diagonal coefficients of $M_m$, in~\eqref{Formula_Pn_Qn}, are then greater or equal to $1$ while the antidiagonal ones are nonnegative. Hence, since $P_0=1$ and $Q_0=0$, a straightforward induction gives the claim. We therefore obtain the contradiction, to $(\omega, k)$ being a zero, that
    \[
        f_n(\omega, k) = \mu_{n+1} \nu_{n+1}(\omega, k) P_n(\omega, k) + Q_n(\omega, k) \geq \mu_{n+1} \nu_{n+1}(\omega, k) > 0\,.
    \]

    On the another hand, $(\omega, k)$ being a solution to~\eqref{Dispersion_relation_n} implies $\nu_{n+1}(\omega, k)\geq0$. Indeed, we would otherwise have $\nu_j(\omega, k)\in i\R_-\setminus\{0\}$ for $j\in\llbracket1, n+1\rrbracket$ and, consequently,
    \[
        i\R \ni \mu_{n+1} \nu_{n+1} P_n = -Q_n \in \R\,.
    \]
    because $P_n, Q_n\in\R$, since $P_0$ and $Q_0$ are real and $M_m$ has real coefficients (even when the $\nu_m$'s are purely imaginary).
    Thus, since $\mu_{n+1}\nu_{n+1}\neq0$, we obtain $P_n=Q_n=0$. However, the matrices $M_m$ are all invertible, since $\det M_m=1$, contradicting~\eqref{Formula_Pn_Qn}:
    \[
        \begin{pmatrix} 0 \\ 0 \end{pmatrix} = \begin{pmatrix} P_n \\ Q_n \end{pmatrix} = M_n \cdots  M_1 \begin{pmatrix} 1 \\ 0 \end{pmatrix}. \qedhere
    \]
\end{proof}

The following proposition establishes the relation between the $k_\ell$'s and the Love waves.
\begin{proposition}[Characterization of Love waves]\label{nPlus1_layers_equivalence_kell_Love_waves}
    Let $n\in\N\setminus\{0\}$ and the $k_\ell(\omega)$'s be as in Definition~\ref{Def_kell_nPlus1_layers}. Then,
    \[
        \{ (\omega, k) : \text{a Love wave exists at } (\omega, k) \} = \{ (\omega, k_\ell(\omega)) : k_\ell(\omega)\neq\omega/C_\infty \}_{\omega>0,\, \ell\geq1}\,.
    \]
\end{proposition}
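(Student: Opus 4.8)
The plan is to prove the set identity by showing that ``a Love wave exists at $(\omega,k)$'' is equivalent to the conjunction of $k>\omega/C_\infty$ (equivalently $\nu_{n+1}>0$) and $f_n(\omega,k)=0$, and then translating the resulting set through Definition~\ref{Def_kell_nPlus1_layers} and Lemma~\ref{Alt_Def_kell_nPlus1_layers}. In the regime $k>\omega/C_\infty$ one has $\nu_{n+1}=\nu_\infty>0$, so the only $L^2((H_{n+1},+\infty))$-solution on the semi-infinite layer is $\alpha_{n+1}e^{-\nu_{n+1}z}$; writing the general solution on each finite layer (exponential if $\nu_j\neq0$, affine if $\nu_j=0$) with the first-layer ansatz already encoding $\phi'(0)=0$, the $2n$ continuity conditions for $\phi$ and $\mu\phi'$ at $H_2,\dots,H_{n+1}$ form a homogeneous $2n\times2n$ linear system in the layer coefficients, and $\phi\not\equiv0$ holds iff this system has a non-zero solution, i.e.\ iff its determinant vanishes. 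When all $\nu_j$ ($j\leq n$) are non-zero this determinant is $D_n=\det\mathbb{M}_n$ of~\eqref{Formula_D_n}; the remaining cases are handled by Proposition~\ref{Prop_recursive_formula_Dn}, and in every case~\eqref{Formula_det_any_n} writes the determinant as $f_n(\omega,k)$ times a non-zero real constant (non-zero precisely because $\nu_{n+1}>0$). Hence, for $k>\omega/C_\infty$, a Love wave exists at $(\omega,k)$ iff $f_n(\omega,k)=0$ --- this is precisely~\eqref{nPlus1_layers_relation_defining_Love_waves_preversion}.

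Next I would rule out Love waves for $k\leq\omega/C_\infty$. If $k=\omega/C_\infty$ then $\nu_{n+1}=0$ and the last-layer solution is affine; if $k<\omega/C_\infty$ then $\nu_{n+1}$ is purely imaginary and non-zero and the last-layer solution is a non-trivial bounded oscillation $a\cos(\eta z)+b\sin(\eta z)$. In both cases square-integrability on $(H_{n+1},+\infty)$ forces $\phi$ to vanish identically on $[H_{n+1},+\infty)$, so $\phi(H_{n+1})=0$ and, since $\mu\phi'$ is continuous and $\mu$ is constant and positive on each layer, $\phi'(H_{n+1}^-)=0$ as well. An induction on the layers from right to left, using uniqueness for the constant-coefficient ODE on each layer together with the interface relations on $(\phi,\mu\phi')$, then yields $\phi\equiv0$ on $[0,+\infty)$. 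Hence no Love wave exists at $(\omega,k)$ when $k\leq\omega/C_\infty$.

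Putting these together, the set where Love waves exist equals $\{(\omega,k):f_n(\omega,k)=0,\ k>\omega/C_\infty\}$. For fixed $\omega$, the $k_\ell(\omega)$'s are by Definition~\ref{Def_kell_nPlus1_layers} exactly the real solutions of $f_n(\omega,\cdot)=0$, and by Lemma~\ref{Alt_Def_kell_nPlus1_layers} they all lie in $[\omega/C_\infty,\omega/C_0)$; intersecting with $\{k>\omega/C_\infty\}$ therefore removes precisely those $k_\ell(\omega)$ equal to $\omega/C_\infty$, which gives the claimed equality. (If one takes~\eqref{nPlus1_layers_relation_defining_Love_waves_preversion} as already established, the proof reduces to this last set-theoretic manipulation.)

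I expect the main obstacle to be the bookkeeping hidden in the first step: confirming that the linear algebra genuinely produces $\mathbb{M}_n$ with the correct rows for each degenerate layer $\nu_j=0$, and checking via~\eqref{Formula_det_any_n} that the scalar relating the determinant to $f_n$ is non-zero (so that the two vanish together). The second step is routine once one observes that an affine or bounded-oscillatory function fails to be $L^2$ on a half-line, and that under $\mu\phi'\in\mathcal{C}$ the vanishing of the Cauchy pair $(\phi,\phi')$ propagates across interfaces because $\mu$ is continuous and non-vanishing on each closed layer.
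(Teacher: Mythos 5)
Your proof is correct and takes essentially the same route as the paper: the paper's own proof of this proposition is exactly your final set-theoretic step, combining the equivalence~\eqref{nPlus1_layers_relation_defining_Love_waves_preversion} with Lemma~\ref{Alt_Def_kell_nPlus1_layers}. The material in your first two paragraphs (the determinant derivation of~\eqref{nPlus1_layers_relation_defining_Love_waves_preversion} and the exclusion of $k\leq\omega/C_\infty$ via the $L^2$ constraint on the semi-infinite layer) is sound, but the paper establishes it in the text preceding the proposition rather than inside its proof, so your parenthetical remark that the proof reduces to the last manipulation is precisely what the paper does.
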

\begin{proof}
    Using~\eqref{nPlus1_layers_relation_defining_Love_waves_preversion} and Lemma~\ref{Alt_Def_kell_nPlus1_layers}, we obtain
    \[
        \begin{multlined}[b][0.9\textwidth]
            \{ (\omega, k) : \text{a Love wave exists at } (\omega, k) \} \\
            \begin{aligned}[t]
                &= \{ (\omega, k)\in(0,+\infty)\times(\omega/C_\infty, +\infty) : f_n(\omega, k)=0 \} \\
                &= \{ (\omega, k)\in(0,+\infty)\times\R : \exists\,\ell\geq1\,, k=k_\ell(\omega)\neq\omega/C_\infty \}
            \end{aligned} \\
            =: \{ (\omega, k_\ell(\omega)) : k_\ell(\omega)\neq\omega/C_\infty \}_{\omega>0,\, \ell\geq1}\,.
        \end{multlined}
        \qedhere
    \]
\end{proof}

\begin{remark*}
    The reader can notice the small difference between the definition of the $k_\ell(\omega)$'s and the existence of a Love wave at~$(\omega, k)$: the former allows $k_\ell(\omega)=\omega/C_\infty$, while the latter excludes $(\omega, \omega/C_\infty)$. Even though there are $\omega$'s for which $(\omega, \omega/C_\infty)$ is a zero of~$f_n$, there are no Love waves at these couples. Nevertheless, we allow them in the definition of~$k_\ell$ as it will be useful.
\end{remark*}

Finally, the characterization in Proposition~\ref{nPlus1_layers_equivalence_kell_Love_waves} together with Lemma~\ref{Alt_Def_kell_nPlus1_layers} implies that a Love wave existing at $(\omega,k)$ is equivalent to
\begin{equation}\label{nPlus1_layers_relation_defining_Love_waves}
    f_n(\omega, k) = 0 \quad \text{ and } \quad \omega / C_\infty < k < \omega / C_0\,.
\end{equation}
\begin{remark*}
    In particular $\nu_{n+1}=\nu_\infty>0$ and $\nu_0\in i\R_-$.
    Moreover, if $\{C_j\}_{1\leq j \leq n+1}$ is a strictly increasing sequence, then $C_1=C_0$ hence $\nu_1\in i\R_-$.
    Finally, the lower bound means that if there is a $C_j \geq C_\infty$, then the knowledge of the frequency--wavenumber couples of the Love waves will not allow to recover this value $C_j$.
\end{remark*}

As an immediate corollary of Proposition~\ref{Prop_recursive_formula_Dn}, and a key property in some of our proofs, we obtain the \emph{simplicity} of the $k_\ell$'s.
\begin{corollary}[Simplicity of the $k_\ell$'s]\label{simplicity_k}
    Let $n\in\N\setminus\{0\}$. If a Love wave exists at $(\omega, k)$, i.e., there exists an $L^2$-solution~$\phi_{\omega,k}\not\equiv0$ to~\eqref{Problem_equations_new} for the couple~$(\omega, k)$, then there are no other Love waves at $(\omega, k)$ that are linearly independent of $\phi_{\omega,k}$.
\end{corollary}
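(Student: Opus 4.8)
The plan is to read this as a pure linear-algebra fact: the space of $L^2$-solutions of~\eqref{Problem_equations_new} at a fixed couple $(\omega, k)$ is linearly isomorphic to $\ker\mathbb{M}_n$, and the bound $\rank\mathbb{M}_n \geq 2n-1$ from Proposition~\ref{Prop_recursive_formula_Dn} then forces $\dim\ker\mathbb{M}_n\leq 1$.

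First I would record the shape of any candidate solution. On each layer $[H_j, H_{j+1})$ the equation reduces to the constant-coefficient ODE $\phi'' = (k^2 - \omega^2 C_j^{-2})\phi$, whose solution space is two-dimensional, spanned by $e^{\pm\nu_j z}$ when $\nu_j\neq 0$ and by $\{1, z\}$ when $\nu_j = 0$; the condition $\phi'(0)=0$ reduces the first layer to the line spanned by $\cosh[\nu_1 z]$ (a constant when $\nu_1 = 0$), and the $L^2$-condition at infinity, together with $\nu_{n+1}>0$ (which holds since a Love wave exists at $(\omega,k)$), reduces the last layer to the line spanned by $e^{-\nu_{n+1}z}$. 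Thus $\phi$ is encoded by the $2n$-tuple $c = (\alpha_1, \alpha_2, \beta_2, \dots, \alpha_n, \beta_n, \alpha_{n+1})$ of coefficients (with the obvious affine substitution on any layer where $\nu_j = 0$), and the linear map $c\mapsto\phi$ is injective: each layer is an interval of positive length on which the two basis functions are linearly independent, so $\phi\equiv 0$ on $[H_j, H_{j+1})$ forces the associated coefficients to vanish.

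Next I would observe that the continuity requirements $\phi\in\mathcal{C}([0,+\infty))$ and $\mu\phi'\in\mathcal{C}([0,+\infty))$ amount exactly to the $2n$ interface equations at $H_2, \dots, H_{n+1}$, i.e., to the homogeneous system $\mathbb{M}_n c = 0$ (with the affine blocks inserted where some $\nu_j$ vanishes, exactly as in Proposition~\ref{Prop_recursive_formula_Dn}). Conversely, any $c\in\ker\mathbb{M}_n$ produces, via this dictionary, an $L^2$-solution of~\eqref{Problem_equations_new}. Hence the space of Love waves at $(\omega, k)$ is linearly isomorphic to $\ker\mathbb{M}_n$, and $\dim\ker\mathbb{M}_n = 2n - \rank\mathbb{M}_n\leq 1$ by Proposition~\ref{Prop_recursive_formula_Dn}. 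Since by hypothesis there is a nonzero Love wave $\phi_{\omega,k}$ at $(\omega,k)$, this kernel is exactly one-dimensional, so every Love wave at $(\omega,k)$ is a scalar multiple of $\phi_{\omega,k}$ and, in particular, none is linearly independent of it.

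I do not expect a genuine obstacle: the only point requiring care is the bookkeeping in the degenerate configurations where some $\nu_j = 0$, in which the exponential basis of a layer must be replaced by the affine one and the corresponding columns of $\mathbb{M}_n$ change accordingly --- but this is already handled inside Proposition~\ref{Prop_recursive_formula_Dn}, so there is nothing new to prove. (In the generic case one even sees the rank bound directly from the invertibility of the submatrix $\widetilde{\mathbb{M}}_n$ obtained by deleting the first row and last column of $\mathbb{M}_n$, already noted before the statement of that proposition.)
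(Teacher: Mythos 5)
Your proposal is correct and follows exactly the route the paper intends: the paper derives Corollary~\ref{simplicity_k} as an immediate consequence of the rank bound $\rank\mathbb{M}_n\geq 2n-1$ in Proposition~\ref{Prop_recursive_formula_Dn}, via the identification of the space of Love waves at $(\omega,k)$ with $\ker\mathbb{M}_n$ that you spell out. The only difference is that you make explicit the dictionary between coefficient vectors and solutions (including the degenerate layers with $\nu_j=0$), which the paper leaves implicit.
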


We now turn to the proof (by induction) of Proposition~\ref{Prop_recursive_formula_Dn}. To that end, for each $n$ we consider $D_n$ as a function of $\nu_{n+1}$: $D_n\equiv D_n(\nu_{n+1})$, and we define
\begin{equation}
    \bar{D}_n\equiv\bar{D}_n(\nu_{n+1}) := e^{\nu_{n+1} H_{n+1}} D_n(\nu_{n+1})
    \quad \text{and} \quad \widetilde{D}_n\equiv\widetilde{D}_n(\nu_{n+1}):=\frac{e^{\nu_{n+1} H_{n+1}}}{2^n \prod\limits_{j=2}^n \mu_j \nu_j} D_n\,.
\end{equation}
 
One of the key points in the proof is that, for any $n\geq2$, the one-to-last and two-to-last columns of $\mathbb{M}_n$ are exactly the same up to replacing $\nu_n$ by $-\nu_n$, since ``$R_n^l(\nu_n)=R_n^r(-\nu_n)$'' (see the definitions in~\eqref{Formula_D_n}). Consequently, expanding the determinant of $D_{n+1}$ will make appear both $\widetilde{D}_n(\nu_{n+1})$ and $\widetilde{D}_n(-\nu_{n+1})$ which depend very simply on $\pm\nu_{n+1}$, namely, only through the two factors $\mu_{n+1} \nu_{n+1}$ appearing in~\eqref{Formula_det_any_n}.

\begin{proof}[Proof of Proposition~\ref{Prop_recursive_formula_Dn}]
    We start by the result on the rank of $\mathbb{M}_n$. As explained earlier, inspecting~\eqref{Formula_D_n}, we see that the submatrix $\widetilde{\mathbb{M}}_n$ obtained by removing from $\mathbb{M}_n$ the last column as well as either the first or the second row is a block (upper) triangular matrix with first diagonal element $\tilde{L}_1$ being either $2\mu_1 \nu_1 \sinh[ \nu_1 T_1 ]$ or $2\cosh[ \nu_1 T_1 ]$, and the $L_j$'s, $2\leq j \leq n$, on the $n-1$ other diagonal blocks. Therefore,
    \[
        \det \widetilde{\mathbb{M}}_n = \tilde{L}_1 \prod_{j=2}^n \det L_j \,.
    \]
    Moreover, on one hand, for $2\leq j \leq n$,
    \[
        L_j = 
        \left\{
        \begin{aligned}
            &\begin{pmatrix}
                +e^{-\nu_j H_{j+1}} & +e^{+\nu_j H_{j+1}} \\
                -\mu_j \nu_j e^{-\nu_j H_{j+1}} & +\mu_j \nu_j e^{+\nu_j H_{j+1}}
            \end{pmatrix} \qquad &\text{if } \nu_j\neq0 \\
            &\begin{pmatrix}
                H_{j+1} & 1 \\
                \mu_j & 0
            \end{pmatrix} \qquad &\text{otherwise,} 
        \end{aligned}
        \right.
    \]
    where the formula for $\nu_j=0$ is due to the boundary conditions combined with the fact that the $L^2$-solution is then linear on the $j$-th layer. Consequently, still for $2\leq j \leq n$, $\det L_j = 2\mu_j\nu_j$ if $\nu_j\neq0$ and $\det L_j = -\mu_j$ if $\nu_j=0$. In particular, $\det L_j\neq0$ for $2\leq j \leq n$.
    On another hand, at least one of the values $2\mu_1 \nu_1 \sinh[ \nu_1 T_1 ]$ and $2\cosh[ \nu_1 T_1 ]$ is non-zero. Hence, we choose $\widetilde{\mathbb{M}}_n$ (i.e., the row of $\mathbb{M}_n$ that we remove to form $\widetilde{\mathbb{M}}_n$) in such a way that the number $\tilde{L}_1$ is non-zero.
    We have therefore constructed a submatrix $\widetilde{\mathbb{M}}_n\in\C_{2n-1,2n-1}$ of $\mathbb{M}_n$ with $\det \widetilde{\mathbb{M}}_n = \tilde{L}_1 \prod_{j=2}^n \det L_j \neq0$, hence $\rank \mathbb{M}_n\geq2n-1$.
    
    We now turn to the result on the determinant $D_n$. First, the fact that, for $n$ fixed, the formulae are real-valued is due to the fact that for a Love wave to exist (for a given $n$ fixed), it must hold that $\nu_{n+1}\in\R$ and that $\nu_j\in\R\cup i\R$ for $j\leq n$.
    
    We start by assuming that all $\nu_j$'s are non-zero.
    For $n=1$, we have
    \[
        \mathbb{M}_1 =
        \begin{pmatrix}
            2 \cosh[ \nu_1 T_1 ] & - e^{-\nu_2 H_2} \\
            2 \mu_1 \nu_1 \sinh[ \nu_1 T_1 ] & \mu_2 \nu_2 e^{-\nu_2 H_2}
        \end{pmatrix},
    \]
    hence
    $e^{\nu_2 H_2} D_1 / 2 = \mu_2 \nu_2 \cosh[ \nu_1 T_1 ] + \mu_1 \nu_1 \sinh[ \nu_1 T_1 ]$ and the claim~\eqref{Formula_det_any_n} is verified.
    Assume now that~\eqref{Formula_det_any_n} holds for some $n\geq1$. Then, using again $A_j = \mu_j \nu_j$ for shortness, from
    \begingroup
        \renewcommand*{\arraystretch}{1.1}
        \[
            D_{n+1} = \text{\tiny
            $\arraycolsep=0.3\arraycolsep\ensuremath{
		\begin{vNiceArray}{ccccDcc}[margin,custom-line = {letter=D, tikz=dashed}]
			\Block{5-4}<\LARGE>{\mathbb{M}_n} & & & & 0 & 0 \\[-5pt]
			& & & & \vdots & \vdots \\
			& & & & 0 & 0 \\
			& & & & -e^{+\nu_{n+1} H_{n+1}} & 0 \\
			& & & & -A_{n+1}e^{+\nu_{n+1} H_{n+1}} & 0 \\
			\hdashline
			0 & \cdots & 0 & +e^{-\nu_{n+1} H_{n+2}} & +e^{+\nu_{n+1} H_{n+2}} & -e^{-\nu_{n+2} H_{n+2} \vphantom{H_{n+2}^2}} \\
			0 & \cdots & 0 & -A_{n+1}e^{-\nu_{n+1} H_{n+2}} & +A_{n+1}e^{+\nu_{n+1} H_{n+2}} & +A_{n+2}e^{-\nu_{n+2} H_{n+2}}
		\end{vNiceArray}
            }$
            }\,,
        \]
        we obtain
        \begin{align*}
            \bar{D}_{n+1} &=
            A_{n+2} \text{\tiny
            	$\arraycolsep=0.3\arraycolsep\ensuremath{
               	\begin{vNiceArray}{ccccDc}[margin,custom-line = {letter=D, tikz=dashed}]
				\Block{5-4}<\LARGE>{\mathbb{M}_n} & & & & 0 \\[-5pt]
				& & & & \vdots \\
				& & & & 0 \\
				& & & & -e^{+\nu_{n+1} H_{n+1}} \\
				& & & & -A_{n+1}e^{+\nu_{n+1} H_{n+1}} \\
				\hdashline
				0 & \cdots & 0 & +e^{-\nu_{n+1} H_{n+2}} & +e^{+\nu_{n+1} H_{n+2} \vphantom{H_{n+2}^2} }
			\end{vNiceArray}
			}$
                }
                + \text{\tiny
			$\arraycolsep=0.3\arraycolsep\ensuremath{
			\begin{vNiceArray}{ccccDc}[margin,custom-line = {letter=D, tikz=dashed}]
				\Block{5-4}<\LARGE>{\mathbb{M}_n} & & & & 0 \\[-5pt]
				& & & & \vdots \\
				& & & & 0 \\
				& & & & -e^{+\nu_{n+1} H_{n+1}} \\
				& & & & -A_{n+1}e^{+\nu_{n+1} H_{n+1}} \\
				\hdashline
				0 & \cdots & 0 & -A_{n+1}e^{-\nu_{n+1} H_{n+2}} & +A_{n+1}e^{+\nu_{n+1} H_{n+2} \vphantom{H_{n+2}^2} }
			\end{vNiceArray}
			}$
		} \\
            &=
            \begin{multlined}[t][0.9\textwidth]
                A_{n+2} \left( e^{+\nu_{n+1} H_{n+2}} D_n(\nu_{n+1}) - e^{-\nu_{n+1} H_{n+2}} D_n(-\nu_{n+1}) \right)\\
                + A_{n+1} \left( e^{+\nu_{n+1} H_{n+2}} D_n(\nu_{n+1}) + e^{-\nu_{n+1} H_{n+2}} D_n(-\nu_{n+1}) \right)
            \end{multlined} \\
            &=
            \begin{multlined}[t][0.9\textwidth]
                A_{n+2} \left( e^{+\nu_{n+1} T_{n+1}} \bar{D}_n(\nu_{n+1}) - e^{-\nu_{n+1} T_{n+1}} \bar{D}_n(-\nu_{n+1}) \right)\\
                + A_{n+1} \left( e^{+\nu_{n+1} T_{n+1}} \bar{D}_n(\nu_{n+1}) + e^{-\nu_{n+1} T_{n+1}} \bar{D}_n(-\nu_{n+1}) \right),
            \end{multlined}
        \end{align*}
        where we used, for the one-to-last equality, that the last column of~$\mathbb{M}_n$ is the same as the column in the top-right block up to changing $\nu_{n+1}$ into $-\nu_{n+1}$.
    \endgroup
    Consequently,
    \begin{align*}
        \widetilde{D}_{n+1} &= \frac{\bar{D}_{n+1}}{2^{n+1} \prod\limits_{j=2}^{n+1} \mu_j \nu_j} \\
        &\begin{multlined}[t][0.9\textwidth]
            = \frac{A_{n+2}}{A_{n+1}} \Biggl( \frac{e^{+\nu_{n+1} T_{n+1}}}{2} \frac{\bar{D}_n(\nu_{n+1})}{2^n \prod\limits_{j=2}^n A_j} - \frac{e^{-\nu_{n+1} T_{n+1}}}{2} \frac{\bar{D}_n(-\nu_{n+1})}{2^n \prod\limits_{j=2}^n A_j} \Biggr)\\
            + \frac{e^{+\nu_{n+1} T_{n+1}}}{2} \frac{\bar{D}_n(\nu_{n+1})}{2^n \prod\limits_{j=2}^n A_j} + \frac{e^{-\nu_{n+1} T_{n+1}}}{2} \frac{\bar{D}_n(-\nu_{n+1})}{2^n \prod\limits_{j=2}^n A_j}
        \end{multlined} \\
        &= \left( 1 + \frac{A_{n+2}}{A_{n+1}} \right) \frac{e^{+\nu_{n+1} T_{n+1}}}{2} \widetilde{D}_n(\nu_{n+1}) + \left( 1 - \frac{A_{n+2}}{A_{n+1}} \right) \frac{e^{-\nu_{n+1} T_{n+1}}}{2} \widetilde{D}_n(-\nu_{n+1})\,,
    \end{align*}
    and we can now use the induction assumption
    \[
        \widetilde{D}_n(\pm\nu_{n+1}) = \left( \pm A_{n+1} P_{n-1} + Q_{n-1} \right) \cosh[\nu_n T_n] + \left( A_n^2 P_{n-1} \pm A_{n+1} Q_{n-1} \right) \frac{\sinh[\nu_n T_n]}{A_n}
    \]
    to obtain
    \begin{align*}
        \widetilde{D}_{n+1}
        &\begin{multlined}[t][0.9\textwidth]
            = \left( 1 + \frac{A_{n+2}}{A_{n+1}} \right) \frac{e^{+\nu_{n+1} T_{n+1}}}{2} \left( A_{n+1} P_{n-1} + Q_{n-1} \right) \cosh[\nu_n T_n] \\
            + \left( 1 - \frac{A_{n+2}}{A_{n+1}} \right) \frac{e^{-\nu_{n+1} T_{n+1}}}{2} \left( - A_{n+1} P_{n-1} + Q_{n-1} \right) \cosh[\nu_n T_n] \\
            + \left( 1 + \frac{A_{n+2}}{A_{n+1}} \right) \frac{e^{+\nu_{n+1} T_{n+1}}}{2} \left( A_n^2 P_{n-1} + A_{n+1} Q_{n-1} \right) \frac{\sinh[\nu_n T_n]}{A_n} \\
            + \left( 1 - \frac{A_{n+2}}{A_{n+1}} \right) \frac{e^{-\nu_{n+1} T_{n+1}}}{2} \left( A_n^2 P_{n-1} - A_{n+1} Q_{n-1} \right) \frac{\sinh[\nu_n T_n]}{A_n}
        \end{multlined} \\
        &=\begin{aligned}[t]
            &\begin{multlined}[t][0.87\textwidth]
                \left[ A_{n+2} \left( P_{n-1} \cosh[\nu_n T_n] + Q_{n-1} \frac{\sinh[\nu_n T_n]}{A_n} \right) \right. \\
                \left.+ \left(A_n^2 P_{n-1} \frac{\sinh[\nu_n T_n]}{A_n} + Q_{n-1} \cosh[\nu_n T_n] \right) \right] \cosh[\nu_{n+1} T_{n+1}]
            \end{multlined}\\
            &\begin{multlined}[t][0.87\textwidth]
                + \left[ A_{n+1}^2 \left( P_{n-1} \cosh[\nu_n T_n] + Q_{n-1} \frac{\sinh[\nu_n T_n]}{A_n} \right)  \right. \\
                \left. + A_{n+2} \left( A_n^2 P_{n-1} \frac{\sinh[\nu_n T_n]}{A_n} + Q_{n-1} \cosh[\nu_n T_n] \right) \right] \frac{\sinh[\nu_{n+1} T_{n+1}]}{A_{n+1}}
            \end{multlined}
        \end{aligned} \\
        &=\begin{aligned}[t]
            &\left( A_{n+2} P_n + Q_n \right) \cosh[\nu_{n+1} T_{n+1}] + \left( A_{n+1}^2 P_n + A_{n+2} Q_n \right) \frac{\sinh[\nu_{n+1} T_{n+1}]}{A_{n+1}}\,.
        \end{aligned}
    \end{align*}
    This concludes the proof in the case $\prod_{j=2}^n \nu_j\neq0$, since it is exactly
    \[
        \frac{e^{\nu_{n+2} H_{n+2}}}{2^{n+1} \prod\limits_{j=2}^{n+1} \mu_j \nu_j} D_{n+1}=\widetilde{D}_{n+1} = \mu_{n+2} \nu_{n+2} P_{n+1} + Q_{n+1}\,,
    \]
    where we used the definitions of $P_{n+1}$ and $Q_{n+1}$ given in~\eqref{Formula_Pn_Qn}. Namely,
    \[
        \left\{
        \begin{aligned}
            P_{n+1} &= P_n \cosh[\nu_{n+1} T_{n+1}] + Q_n \frac{\sinh[\nu_{n+1} T_{n+1}]}{\mu_{n+1} \nu_{n+1}}\\
            Q_{n+1} &= \mu_{n+1} \nu_{n+1} P_n \sinh[\nu_{n+1} T_{n+1}] + Q_n \cosh[\nu_{n+1} T_{n+1}]\,.
        \end{aligned}
        \right.
    \]
    
    We now prove the case ``$\nu_j=0$''. Roughly speaking, we prove that~\eqref{Formula_det_any_n} in such limit case is nothing else than passing to the limit $\nu_j\to0$ in the formula for $\prod_{j=2}^n \nu_j\neq0$.
    
    The r.h.s.\ of~\eqref{Formula_det_any_n} passes to the limit since the formula of $M_m$ in~\eqref{Formula_Pn_Qn} at $\nu_m=0$ is indeed the limit $\nu_m\to0$ of its formula for $\nu_m\neq0$.
    For the l.h.s., we will expand the determinant $D_n$ according to columns where $\nu_j$ appears for both the general case and the case $\nu_j=0$. Before doing so, we treat apart the case of $\nu_1$ since it appears only in the first column. In the general case it appears ---see~\eqref{Formula_D_n}--- through
    \[
        L_1^r:= 2 
        \begin{pmatrix}
            \cosh[ \nu_1 T_2 ] \\
            \mu_1 \nu_1 \sinh[ \nu_1 T_2 ]
        \end{pmatrix},
    \]
    while in the case $\nu_1=0$ the solution is linear on the first layer thence, by the boundary conditions, $L_1^r$ is replaced by $\begin{pmatrix} 2\\0\end{pmatrix}$. Since the latter is nothing than the limit, when $\nu_1\to0$ of the former, we of course have the claim:
    \[
        \lim\limits_{\nu_1\to0} \frac{e^{\nu_{n+1} H_{n+1}}}{2^n \prod\limits_{j=2}^n \mu_j \nu_j} D_n(\nu_1) = \frac{e^{\nu_{n+1} H_{n+1}}}{2^n \prod\limits_{j=2}^n \mu_j \nu_j} D_n(\nu_1=0)\,.
    \]
    
    For $j>1$, we first notice that $\nu_j$ appears only at the $(2j-2)$-th and $(2j-1)$-th column of $D_n$. Thus, omitting the dependency in $n$, we define $\widetilde{D}_p$ as the \emph{minor} where we remove the $(2j-2)$-th column and the $p$-th row, and $\widetilde{D}_{p}^{q}=\widetilde{D}_{q}^{p}$ as the \emph{minor} where we remove the $(2j-2)$-th and $(2j-1)$-th columns and the $p$-th and $q$-th rows.
    Moreover, we note that if $\nu_j=0$, then the solution is linear on the $j$-th layer thence, by the boundary conditions, the determinant is given by~\eqref{Formula_D_n} but with
    \[
        L_j:=
        \begin{pmatrix}
            +e^{-\nu_j H_{j+1}} & +e^{+\nu_j H_{j+1}} \\
            -\mu_j \nu_j e^{-\nu_j H_{j+1}} & +\mu_j \nu_j e^{+\nu_j H_{j+1}}
        \end{pmatrix}
        \quad \textrm{ and } \quad
        R_j:=
        \begin{pmatrix}
            -e^{-\nu_j H_j} & -e^{+\nu_j H_j} \\
            +\mu_j \nu_j e^{-\nu_j H_j} & -\mu_j \nu_j e^{+\nu_j H_j}
        \end{pmatrix}
    \]
    replaced by
    \[
        L_j(\nu_j=0):=
        \begin{pmatrix}
            H_{j+1} & 1 \\
            \mu_j & 0
        \end{pmatrix}
        \quad \textrm{ and } \quad
        R_j(\nu_j=0):=
        \begin{pmatrix}
            -H_j & -1 \\
            -\mu_j & 0
        \end{pmatrix}.
    \]
    
    We first expand $D_n$ when $\nu_j\neq0$ according to the $(2j-2)$-th and $(2j-1)$-th columns and obtain
    \begin{align*}
        D_n &= -\left(-e^{-\nu_j H_j}\right) \widetilde{D}_{2j-3} + \mu_j \nu_j e^{-\nu_j H_j} \widetilde{D}_{2j-2} - e^{-\nu_j H_{j+1}} \widetilde{D}_{2j-1} + \left(-\mu_j \nu_j e^{-\nu_j H_{j+1}} \right) \widetilde{D}_{2j} \\
        &=\begin{multlined}[t][0.92\textwidth]
            e^{-\nu_j H_j} \left[ -\left(-\mu_j \nu_j e^{+\nu_j H_j} \right) \widetilde{D}_{2j-3}^{2j-2} + e^{+\nu_j H_{j+1}} \widetilde{D}_{2j-3}^{2j-1} - \mu_j \nu_j e^{+\nu_j H_{j+1}} \widetilde{D}_{2j-3}^{2j} \right] \\
            + \mu_j \nu_j e^{-\nu_j H_j} \left[ -\left(- e^{+\nu_j H_j}\right) \widetilde{D}_{2j-2}^{2j-3} + e^{+\nu_j H_{j+1}} \widetilde{D}_{2j-2}^{2j-1} - \mu_j \nu_j e^{+\nu_j H_{j+1}} \widetilde{D}_{2j-2}^{2j} \right] \\
            - e^{-\nu_j H_{j+1}} \left[ -\left(- e^{+\nu_j H_j}\right) \widetilde{D}_{2j-1}^{2j-3} + \left(-\mu_j \nu_j e^{+\nu_j H_j} \right) \widetilde{D}_{2j-1}^{2j-2} - \mu_j \nu_j e^{+\nu_j H_{j+1}} \widetilde{D}_{2j-1}^{2j} \right] \\
            - \mu_j \nu_j e^{-\nu_j H_{j+1}} \left[ -\left(- e^{+\nu_j H_j}\right) \widetilde{D}_{2j}^{2j-3} + \left(-\mu_j \nu_j e^{+\nu_j H_j} \right) \widetilde{D}_{2j}^{2j-2} - e^{+\nu_j H_{j+1}} \widetilde{D}_{2j}^{2j-1} \right]
        \end{multlined}\\
        &=\begin{multlined}[t][0.92\textwidth]
            2\mu_j \nu_j \widetilde{D}_{2j-3}^{2j-2} + 2 \mu_j \nu_j \widetilde{D}_{2j-1}^{2j} + 2 \mu_j \nu_j \cosh\left[ \nu_j T_j \right] \widetilde{D}_{2j-2}^{2j-1} - 2 \mu_j \nu_j \cosh\left[ \nu_j T_j \right] \widetilde{D}_{2j-3}^{2j} \\
            + 2\sinh\left[ \nu_j T_j \right] \widetilde{D}_{2j-3}^{2j-1} - 2 \mu_j^2 \nu_j^2 \sinh\left[ \nu_j T_j \right] \widetilde{D}_{2j-2}^{2j}\,.
        \end{multlined}
    \end{align*}
    Thus,
    \begin{align*}
        \frac{D_n}{2 \mu_j \nu_j}
        &=\begin{multlined}[t][0.85\textwidth]
            \widetilde{D}_{2j-3}^{2j-2} + \widetilde{D}_{2j-1}^{2j} + \cosh\left[ \nu_j T_j \right] \widetilde{D}_{2j-2}^{2j-1} - \cosh\left[ \nu_j T_j \right] \widetilde{D}_{2j-3}^{2j} \\
            + \frac{\sinh\left[ \nu_j T_j \right]}{\mu_j \nu_j} \widetilde{D}_{2j-3}^{2j-1} - \mu_j \nu_j \sinh\left[ \nu_j T_j \right] \widetilde{D}_{2j-2}^{2j}
        \end{multlined} \\
        &\underset{\nu_j\to0}{\longrightarrow}
        \widetilde{D}_{2j-3}^{2j-2} - \widetilde{D}_{2j-3}^{2j} + \widetilde{D}_{2j-2}^{2j-1} + \widetilde{D}_{2j-1}^{2j} + \frac{T_j}{\mu_j} \widetilde{D}_{2j-3}^{2j-1}\,.
    \end{align*}
    
    We now expand $D_n$ when $\nu_j=0$ according to $(2j-2)$-th and $(2j-1)$-th columns:
    \begin{align*}
        D_n(\nu_j=0)&= -\left(-H_j\right) \widetilde{D}_{2j-3} + \left(-\mu_j\right) \widetilde{D}_{2j-2} - H_{j+1} \widetilde{D}_{2j-1} + \mu_j \widetilde{D}_{2j} \\
        &=\begin{multlined}[t][0.8\textwidth]
            H_j \left[ -0\times \widetilde{D}_{2j-3}^{2j-2} + 1\times \widetilde{D}_{2j-3}^{2j-1} - 0\times \widetilde{D}_{2j-3}^{2j} \right] \\
            - \mu_j \left[ -(-1)\times \widetilde{D}_{2j-2}^{2j-3} + 1\times \widetilde{D}_{2j-2}^{2j-1} - 0\times \widetilde{D}_{2j-2}^{2j} \right] \\
            - H_{j+1} \left[ -(- 1) \widetilde{D}_{2j-1}^{2j-3} + 0\times \widetilde{D}_{2j-1}^{2j-2} - 0\times \widetilde{D}_{2j-1}^{2j} \right] \\
            + \mu_j \left[ -(-1) \widetilde{D}_{2j}^{2j-3} + 0\times \widetilde{D}_{2j}^{2j-2} - 1\times \widetilde{D}_{2j}^{2j-1} \right]
        \end{multlined}\\
        &= - T_j \widetilde{D}_{2j-1}^{2j-3} - \mu_j \left[ \widetilde{D}_{2j-3}^{2j-2} + \widetilde{D}_{2j-2}^{2j-1} - \widetilde{D}_{2j-3}^{2j} + \widetilde{D}_{2j-1}^{2j} \right].
    \end{align*}
    Thus, this concludes the proof of~\eqref{Formula_det_any_n} since we have
    \[
        -\frac{D_n(\nu_j=0)}{\mu_j} = \widetilde{D}_{2j-3}^{2j-2} - \widetilde{D}_{2j-3}^{2j} + \widetilde{D}_{2j-2}^{2j-1} + \widetilde{D}_{2j-1}^{2j} + \frac{T_j}{\mu_j} \widetilde{D}_{2j-3}^{2j-1} = \lim\limits_{\nu_j\to0} \frac{D_n}{2 \mu_j \nu_j}\,. \qedhere
    \]
\end{proof}

To conclude this section, and because they will be useful, we define the following functions and give some of their properties, the proofs of which are postponed to Appendix~\ref{Appendix_nplus1_layers}.
\begin{definition}\label{Def_f_tilde_P_tilde_Q_tilde_nPlus1_layers}
    Let $n\in\N\setminus\{0\}$. Let $P_m, Q_m, f_n:(0,+\infty)\times[\omega / C_\infty, \omega / C_0)\to\R$, $m\in\llbracket0,n\rrbracket$, be respectively defined by the formulae in Proposition~\ref{Prop_recursive_formula_Dn} and in~\eqref{Def_f_n}, and $\bar{\nu}_\infty$ be defined in~\eqref{Def_nu_bar}. Define on $[0,+\infty)\times[1/C_\infty, 1/C_0)$ the three (real-valued) functions $\tilde{P}_m$, $\tilde{Q}_m$, $m\in\llbracket0,n\rrbracket$, and $\tilde{f}_n$ by
    \begin{align}
        &\left\{
        \begin{aligned}
            &\tilde{P}_m(\omega, y) = P_m(\omega, \omega y) \hphantom{\omega^{-1}} &&\text{ on } (0,+\infty)\times[1/C_\infty, 1/C_0)\,, \\
            &\tilde{P}_m(0, y) = 1 \hphantom{\omega^{-1}} &&\text{ for } y\in[1/C_\infty, 1/C_0)\,,
        \end{aligned}
        \right.
        \label{Def_P_tilde_nPlus1_layers} \\
        &\left\{
        \begin{aligned}
            &\tilde{Q}_m(\omega, y) = \omega^{-1} Q_m(\omega, \omega y) &&\text{ on } (0,+\infty)\times[1/C_\infty, 1/C_0)\,, \\
            &\tilde{Q}_m(0, y) = 0 &&\text{ for } y\in[1/C_\infty, 1/C_0)\,,
        \end{aligned}
        \right.
        \label{Def_Q_tilde_nPlus1_layers}
        \intertext{and}
        &\left\{
        \begin{aligned}
            &\tilde{f}_n(\omega, y) = \omega^{-1} f_n(\omega, \omega y) &&\text{ on } (0,+\infty)\times[1/C_\infty, 1/C_0)\,, \\
            &\tilde{f}_n(0, y) = \mu_\infty \bar{\nu}_\infty(y) &&\text{ for } y\in[1/C_\infty, 1/C_0)\,,
        \end{aligned}
        \right.
        \label{Def_f_tilde_nPlus1_layers}
    \end{align}
    i.e.,
    \begin{equation}\label{Alt_Def_f_tilde_nPlus1_layers}
        \tilde{f}_n(\omega, y) = \mu_\infty \bar{\nu}_\infty(y) \tilde{P}_n(\omega,y) + \tilde{Q}_n(\omega,y)\,.
    \end{equation}
\end{definition}

\begin{lemma}\label{Continuity_tilde_functions_nPlus1_layers}
    The functions (of two variables) $\tilde{P}_m$, $\tilde{Q}_m$, $m\in\llbracket0,n\rrbracket$, and $\tilde{f}_n$ are continuous.
\end{lemma}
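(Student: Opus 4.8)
The plan is to reduce everything to $\tilde P_m$ and $\tilde Q_m$ and then to diagonalise away the $\omega$‑scaling. By \eqref{Alt_Def_f_tilde_nPlus1_layers}, $\tilde f_n=\mu_\infty\bar\nu_\infty\tilde P_n+\tilde Q_n$, and on $[1/C_\infty,1/C_0)$ the function $\bar\nu_\infty(y)=\sqrt{y^2-C_\infty^{-2}}$ is real, nonnegative and continuous; so it is enough to prove that each $\tilde P_m$, $\tilde Q_m$ ($m\in\llbracket0,n\rrbracket$) is continuous, the cases $m=0$ being trivial since $\tilde P_0\equiv1$, $\tilde Q_0\equiv0$. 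For $\omega>0$ one has $\nu_j(\omega,\omega y)=\omega\,\bar\nu_j(y)$ by \eqref{Def_nu_bar}, and I would conjugate the transfer matrices of \eqref{Def_Mm}: one checks that
\[
    M_j(\omega,\omega y)=D_\omega\,\mathcal M_j(\omega,y)\,D_\omega^{-1},\qquad D_\omega:=\begin{pmatrix}1&0\\0&\omega\end{pmatrix},
\]
with
\[
    \mathcal M_j(\omega,y):=\begin{pmatrix}\cosh[\omega\bar\nu_j T_j] & (\mu_j\bar\nu_j)^{-1}\sinh[\omega\bar\nu_j T_j]\\ \mu_j\bar\nu_j\sinh[\omega\bar\nu_j T_j] & \cosh[\omega\bar\nu_j T_j]\end{pmatrix}
\]
(the off-diagonal entries being read at $\bar\nu_j=0$, i.e.\ at $y=1/C_j$, as their limits, which after conjugation by $D_\omega$ reproduce exactly the $\nu_j=0$ case of \eqref{Def_Mm}, so the piecewise definition causes no jump). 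Since the conjugations telescope, $M_m(\omega,\omega y)\cdots M_1(\omega,\omega y)=D_\omega(\mathcal M_m\cdots\mathcal M_1)D_\omega^{-1}$, and reading the first column off \eqref{Formula_Pn_Qn} gives, for $\omega>0$, $\tilde P_m(\omega,y)=(\mathcal M_m\cdots\mathcal M_1)_{11}$ and $\tilde Q_m(\omega,y)=(\mathcal M_m\cdots\mathcal M_1)_{21}$ — the factor $\omega^{-1}$ in the definition \eqref{Def_Q_tilde_nPlus1_layers} of $\tilde Q_m$ being exactly absorbed by the conjugation.

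The second step is to observe that the entries of $\mathcal M_j$ are in fact entire in $(\omega,y)$. Writing $\cosh w=g(w^2)$ and $w^{-1}\sinh w=h(w^2)$ with $g(u)=\sum_{k\geq0}u^k/(2k)!$ and $h(u)=\sum_{k\geq0}u^k/(2k+1)!$ entire, and using $\bar\nu_j(y)^2=y^2-C_j^{-2}$, the three entries become $g(\omega^2T_j^2(y^2-C_j^{-2}))$, $\omega\mu_j^{-1}T_j\,h(\omega^2T_j^2(y^2-C_j^{-2}))$ and $\mu_j\omega T_j(y^2-C_j^{-2})\,h(\omega^2T_j^2(y^2-C_j^{-2}))$, i.e.\ polynomials in $(\omega,y)$ composed with entire functions. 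A finite product of matrices with entire entries has entire entries, so $\tilde P_m$ and $\tilde Q_m$ — a priori defined only for $\omega>0$ — are restrictions of real-analytic functions on $\R^2$, and in particular continuous on $[0,+\infty)\times[1/C_\infty,1/C_0)$. Evaluating at $\omega=0$, where $\mathcal M_j(0,y)=\Id$ for every $j$, these extensions take the values $(\mathcal M_m\cdots\mathcal M_1)_{11}(0,y)=1$ and $(\mathcal M_m\cdots\mathcal M_1)_{21}(0,y)=0$, which are precisely the values prescribed in \eqref{Def_P_tilde_nPlus1_layers}--\eqref{Def_Q_tilde_nPlus1_layers}; consequently $\tilde f_n=\mu_\infty\bar\nu_\infty\tilde P_n+\tilde Q_n$ is continuous on $[0,+\infty)\times[1/C_\infty,1/C_0)$ and equals $\mu_\infty\bar\nu_\infty(y)$ at $\omega=0$, in accordance with \eqref{Def_f_tilde_nPlus1_layers}.

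The step I expect to need the most care is the behaviour at $\omega=0$ combined with the rescaling $\tilde Q_m=\omega^{-1}Q_m(\omega,\omega\,\cdot\,)$: one must be sure that the $\omega^{-1}$ is genuinely cancelled and does not create a singularity along $\{\omega=0\}$. The conjugation by $D_\omega$ is exactly what makes this transparent; alternatively, one can argue directly that $Q_m(\omega,\omega y)=O(\omega^2)$ as $\omega\to0$, uniformly on compact subsets of $[1/C_\infty,1/C_0)$, because every monomial contributing to the $(2,1)$-entry of the product $M_m(\omega,\omega y)\cdots M_1(\omega,\omega y)$ contains at least one off-diagonal factor $\mu_j\nu_j\sinh[\nu_j T_j]=O(\omega^2)$. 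A point worth stressing, to pre-empt a natural objection, is that although $\bar\nu_j$ itself has a square-root kink at $y=1/C_j$ and is merely continuous there, this is irrelevant: $\omega$ and $\bar\nu_j$ enter all the relevant quantities only through $\bar\nu_j^2=y^2-C_j^{-2}$, which is a polynomial in $y$, so no loss of smoothness occurs. Everything else is a routine check that finite sums and products of continuous (indeed real-analytic) functions are continuous.
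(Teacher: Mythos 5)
Your proof is correct. It starts from the same rescaled transfer-matrix recursion as the paper --- your $\mathcal M_j=D_\omega^{-1}M_j(\omega,\omega\,\cdot)D_\omega$ is precisely the paper's matrix $\bar M_j$ from~\eqref{Def_M_bar} --- but the way you conclude is genuinely different. The paper proves continuity on $(0,+\infty)\times[1/C_\infty,1/C_0)$ by induction from the continuity of the $\bar M_m$'s there, and then treats the line $\omega=0$ separately by an inductive estimate bounding $\normNS{(\tilde P_m-1,\tilde Q_m)}_\infty$ in terms of $\normNS{(\tilde P_{m-1}-1,\tilde Q_{m-1})}_\infty$ and $\normNS{\bar M_m-I_2}_\infty$ near $(0,y_0)$; this extra step is genuinely needed in their setup, since (as they remark) the $\bar M_m$'s themselves fail to be continuous at $(0,1/C_m)$. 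You instead write every entry of $\mathcal M_j$ as a polynomial in $(\omega,y)$ times $g$ or $h$ evaluated at $\omega^2T_j^2(y^2-C_j^{-2})$, where $g(w^2)=\cosh w$ and $h(w^2)=w^{-1}\sinh w$ are entire, so that $\tilde P_m$ and $\tilde Q_m$ are restrictions of real-analytic functions of $(\omega,y)$; this disposes of both exceptional sets ($\omega=0$ and $y=1/C_j$) in one stroke and yields real-analyticity rather than mere continuity. (The same device --- holomorphy of $z\mapsto\cosh[\sqrt z]$ and $z\mapsto\sinh[\sqrt z]/\sqrt z$ --- is used by the paper in the proof of Proposition~\ref{Lemma_finite_number_nPlus1_layers}, but not for this lemma.) Your explicit check that the $(1,2)$ entry of $\mathcal M_j$ at $y=1/C_j$ must be read as the limit $\omega T_j/\mu_j$, so that conjugation by $D_\omega$ reproduces exactly the $\nu_j=0$ case of~\eqref{Def_Mm}, is the correct convention for the recursion to hold at those points, and is a spot where your argument is, if anything, more careful than the displayed definition of $\bar M_m$ in the paper.
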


\begin{lemma}\label{Equivalence_zeros_f_n_and_tilde_f_n}
    $\ker \tilde{f}_n = \{(0,1/C_\infty)\} \cup \left\{ (\omega,y): (\omega,\omega y) \in \ker f_n \right\}$.
\end{lemma}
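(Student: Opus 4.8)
The plan is to verify the set equality by splitting according to the two regimes in the definition~\eqref{Def_f_tilde_nPlus1_layers} of $\tilde{f}_n$: the open region $\omega>0$ and the slice $\omega=0$. On the former the statement is essentially a restatement of the scaling $\tilde{f}_n(\omega,y)=\omega^{-1}f_n(\omega,\omega y)$, while on the latter it is an elementary computation with $\bar{\nu}_\infty$.

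First I would handle $\omega>0$. There $\tilde{f}_n(\omega,y)=\omega^{-1}f_n(\omega,\omega y)$ with $\omega^{-1}\neq0$, so $\tilde{f}_n(\omega,y)=0$ if and only if $f_n(\omega,\omega y)=0$, i.e.\ $(\omega,\omega y)\in\ker f_n$. I would also note, using Lemma~\ref{Alt_Def_kell_nPlus1_layers}, that any zero $(\omega,k)$ of $f_n$ satisfies $k\in[\omega/C_\infty,\omega/C_0)$, so that writing $k=\omega y$ gives $y\in[1/C_\infty,1/C_0)$; this guarantees that every pair in $\{(\omega,y):(\omega,\omega y)\in\ker f_n\}$ actually lies in the domain $[0,+\infty)\times[1/C_\infty,1/C_0)$ of $\tilde{f}_n$. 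Since $f_n$ is defined only for $\omega>0$, this set contains no point with $\omega=0$, and by the displayed equivalence it is exactly the part of $\ker\tilde{f}_n$ lying in $\{\omega>0\}$.

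Then I would treat the slice $\omega=0$. Here $\tilde{f}_n(0,y)=\mu_\infty\bar{\nu}_\infty(y)=\mu_\infty\sqrt{y^2-C_\infty^{-2}}$ by~\eqref{Def_f_tilde_nPlus1_layers} and~\eqref{Def_nu_bar}; on the range $y\in[1/C_\infty,1/C_0)$ the quantity $y^2-C_\infty^{-2}$ is nonnegative, hence $\bar{\nu}_\infty(y)\geq0$ with equality precisely when $y=1/C_\infty$, and since $\mu_\infty>0$ the only zero on this slice is $(0,1/C_\infty)$. Combining the two cases yields $\ker\tilde{f}_n=\{(0,1/C_\infty)\}\cup\{(\omega,y):(\omega,\omega y)\in\ker f_n\}$, which is the claim. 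There is no genuine obstacle: the only point requiring a little care is separating the two defining regimes of $\tilde{f}_n$ and recalling that $f_n$ is undefined at $\omega=0$, so the rescaled kernel contributes nothing there, the boundary point $(0,1/C_\infty)$ being picked up solely from the extension~\eqref{Def_f_tilde_nPlus1_layers}.
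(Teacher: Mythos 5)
Your proof is correct and follows essentially the same route as the paper's: split the kernel according to $\omega>0$ (where the identity $\tilde{f}_n(\omega,y)=\omega^{-1}f_n(\omega,\omega y)$ together with Lemma~\ref{Alt_Def_kell_nPlus1_layers} gives the equivalence) and $\omega=0$ (where $\tilde{f}_n(0,y)=\mu_\infty\bar{\nu}_\infty(y)$ vanishes only at $y=1/C_\infty$). No substantive difference from the paper's argument.
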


\section{Regularity and monotonicity of branches of wavenumbers}\label{Section_monotonicity_existence_LoveWaves}
The aim here is to prove regularity in $\omega$ of the branches $k_\ell$ ---and of the associated functions $\phi_{\ell,\omega}$--- and the monotonicity of the functions $\omega\mapsto k_\ell(\omega)/\omega$. Namely, the goal is to prove the following result.
\begin{theorem}\label{Thm_monotonicity_nPlus1_layers}
    Let $n\in\N\setminus\{0\}$ and the $k_\ell$'s be as in Definition~\ref{Def_kell_nPlus1_layers}. Assume moreover $C_0<\widetilde{C}_2$ if $n\geq3$.
    Then, for any $\ell\geq1$ there exists $\omega_\ell\geq0$ such that the function
    \begin{align*}
        (\omega_\ell, +\infty) &\to (1/C_\infty, 1/C_0) \\
        \omega &\mapsto k_\ell(\omega)/\omega
    \end{align*}
     is analytic, bijective, and increasing, and that $k_\ell(\omega_\ell)=\omega_\ell/C_\infty$ if $\omega_\ell>0$.
    
    Moreover, a Love wave exists at~$(\omega, k)$ if and only if
    \[
        (\omega, k) \in \{(\omega, k_\ell(\omega)) : \omega>\omega_\ell \}_{\ell\geq1}\,.
    \]
\end{theorem}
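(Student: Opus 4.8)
\emph{Proof proposal.} The plan is to work throughout with the rescaled dispersion function $\tilde f_n(\omega,y)=\omega^{-1}f_n(\omega,\omega y)$ of Definition~\ref{Def_f_tilde_P_tilde_Q_tilde_nPlus1_layers}, whose zeros in $[1/C_\infty,1/C_0)$ are, by Lemma~\ref{Alt_Def_kell_nPlus1_layers} together with \eqref{Alt_Def_f_tilde_nPlus1_layers}, exactly the values $k_\ell(\omega)/\omega$. First I would observe that $\tilde f_n$ is real-analytic on $(0,+\infty)\times(1/C_\infty,1/C_0)$: by \eqref{Formula_Pn_Qn}--\eqref{Def_Mm} the functions $\tilde P_m,\tilde Q_m$ are built from $\cosh[\nu_j T_j]$, $\sinh[\nu_j T_j]/(\mu_j\nu_j)$ and $\mu_j\nu_j\sinh[\nu_j T_j]$, each an entire function of $\nu_j^2=\omega^2(y^2-C_j^{-2})$ --- a polynomial in $(\omega,y)$ --- hence entire, while $\bar\nu_\infty(y)=\sqrt{y^2-C_\infty^{-2}}$ is real-analytic for $y>1/C_\infty$. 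Second, I would record the meaning of $(P_n,Q_n)$: by \eqref{relation_M_with_transfer_matrix} and the computation in the Remark, $M_m$ is precisely the matrix transporting $(\phi,\mu\phi')$ across the $m$-th layer, so, denoting by $\phi(\cdot\,;\omega,k)$ the solution of the ODE in \eqref{Problem_equations_new} with $\phi(0)=1$, $\phi'(0)=0$ continued through all the layers, one has $(P_n,Q_n)=(\phi(H_{n+1}),\mu_\infty\phi'(H_{n+1}))$ and therefore $f_n=\mu_\infty\bigl(\phi'(H_{n+1})+\nu_\infty\phi(H_{n+1})\bigr)$; writing $\phi=\alpha_{n+1}e^{-\nu_\infty z}+\beta_{n+1}e^{+\nu_\infty z}$ on the last layer this becomes $f_n=2\mu_\infty\nu_\infty e^{\nu_\infty H_{n+1}}\beta_{n+1}$, so the dispersion relation \eqref{Dispersion_relation_n} is the vanishing of the coefficient $\beta_{n+1}$ of the growing exponential.

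The core of the argument consists of two Wronskian identities. Fix a zero $(\omega_0,k_0)$ with $\omega_0/C_\infty<k_0<\omega_0/C_0$, let $\phi$ be the associated Love wave and set $\psi=\partial_k\phi$ or $\psi=\partial_\omega\phi$. Writing $L\phi=-(\mu\phi')'+(\mu k^2-\rho\omega^2)\phi$ one has $L\phi=0$, $L(\partial_k\phi)=-2\mu k\phi$, $L(\partial_\omega\phi)=2\rho\omega\phi$; since $\phi(0)\equiv1$ and $\phi'(0)\equiv0$ are independent of $(\omega,k)$, the boundary term at $z=0$ in Green's identity $\int_0^{+\infty}(\phi L\psi-\psi L\phi)=-[\mu(\phi\psi'-\psi\phi')]_0^{+\infty}$ vanishes; at $z=+\infty$, although $\psi$ itself grows like $e^{+\nu_\infty z}$ at a zero, the combination $\phi\psi'-\psi\phi'$ converges to a finite constant (all other contributions decaying), and one obtains
\[
    \mu_\infty\nu_\infty\alpha_{n+1}\,\partial_k\beta_{n+1}=k\int_0^{+\infty}\mu\phi^2>0,\qquad \mu_\infty\nu_\infty\alpha_{n+1}\,\partial_\omega\beta_{n+1}=-\omega\int_0^{+\infty}\rho\phi^2<0.
\]
In particular $\alpha_{n+1}\neq0$ (else $\phi(H_{n+1})=\phi'(H_{n+1})=0$ forces $\phi\equiv0$ by ODE uniqueness), whence $\partial_k f_n=2\mu_\infty\nu_\infty e^{\nu_\infty H_{n+1}}\partial_k\beta_{n+1}\neq0$: every zero of $\tilde f_n$ in the open strip is simple, and the analytic implicit function theorem yields near each of them a unique real-analytic branch $\omega\mapsto Y(\omega)=k(\omega)/\omega$. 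Differentiating $\beta_{n+1}(\omega,k(\omega))=0$ gives $k'(\omega)=\omega\int\rho\phi^2/(k\int\mu\phi^2)$, hence
\[
    \omega^2 k\Bigl(\int_0^{+\infty}\mu\phi^2\Bigr)\frac{\di}{\di\omega}\Bigl(\frac{k}{\omega}\Bigr)=\int_0^{+\infty}(\rho\omega^2-\mu k^2)\phi^2 .
\]
Here is the one point that genuinely matters: the a priori sign-indefinite right-hand side is, after multiplying the ODE $-(\mu\phi')'=\mu\omega^2(C^{-2}-k^2\omega^{-2})\phi$ by $\phi$ and integrating by parts over $[0,+\infty)$ (boundary terms vanish by $\phi'(0)=0$ and the exponential decay of $\phi,\phi'$), equal to the manifestly positive Dirichlet integral $\int_0^{+\infty}\mu(\phi')^2$, which is $>0$ since an $L^2$ solution cannot be constant. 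This is precisely what lets us dispense with any monotonicity assumption on $C$, and it shows $\frac{\di}{\di\omega}(k(\omega)/\omega)>0$ along each branch.

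It remains to globalise. For fixed $\omega$, $\tilde f_n(\omega,\cdot)$ is real-analytic on $(1/C_\infty,1/C_0)$ and strictly positive near $1/C_0$ --- indeed for $y\geq1/C_0$ all $\bar\nu_j$ are real and $\geq0$, so an induction on \eqref{Formula_Pn_Qn} gives $\tilde P_m\geq1$, $\tilde Q_m\geq0$, hence $\tilde f_n\geq\mu_\infty\bar\nu_\infty(y)>0$ using $C_0<C_\infty$ --- so it has finitely many zeros there, all simple, which I order decreasingly as $Y_1(\omega)>Y_2(\omega)>\cdots$ and set $k_\ell(\omega):=\omega Y_\ell(\omega)$, matching Definition~\ref{Def_kell_nPlus1_layers}. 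Simple zeros cannot merge, and no zero can cross $y=1/C_0$; since the branches are increasing, a zero can only \emph{enter} the strip through $y=1/C_\infty$ as $\omega$ grows, so the number of zeros is nondecreasing in $\omega$ and each $Y_\ell$ extends to a real-analytic increasing function on a maximal interval $(\omega_\ell,+\infty)$, $\omega_\ell\geq0$. Monotonicity and the lower bound give $\lim_{\omega\downarrow\omega_\ell}Y_\ell=:L\geq1/C_\infty$; if $\omega_\ell>0$ then $L=1/C_\infty$ (otherwise the branch stays in the open strip and continues past $\omega_\ell$), i.e.\ $k_\ell(\omega_\ell)=\omega_\ell/C_\infty$, and if $\omega_\ell=0$ then $\tilde f_n(0,\cdot)=\mu_\infty\bar\nu_\infty$ forces $Y_\ell(\omega)\to1/C_\infty$ as $\omega\to0^+$. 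At the other end, the Weyl-type count of Proposition~\ref{Prop_asymptotics_N} --- for $n\geq3$ via Lemma~\ref{Lemma_zeros_ftilde_any_y_nPlus1_layers} under the hypothesis $C_0<\widetilde{C}_2$, which is exactly what makes the relevant interval $[1/\widetilde{C}_2,1/C_0)$ nonempty --- gives $N(\omega,y)\to+\infty$ for every $y<1/C_0$, so for large $\omega$ at least $\ell$ zeros lie above any such $y$, whence $Y_\ell(\omega)\to1/C_0$ as $\omega\to+\infty$. Thus $Y_\ell:(\omega_\ell,+\infty)\to(1/C_\infty,1/C_0)$ is an increasing analytic bijection, and the final description of the Love-wave couples follows from Proposition~\ref{nPlus1_layers_equivalence_kell_Love_waves}, since $k_\ell(\omega)=\omega/C_\infty$ happens only at $\omega=\omega_\ell$ (when $\omega_\ell>0$).

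\emph{Main obstacle.} I expect the delicate part to be the bookkeeping in the Wronskian identities at $z=+\infty$ --- isolating the surviving constant cross-term $2\nu_\infty\alpha_{n+1}\partial\beta_{n+1}$ among contributions that individually blow up, and carrying along the degenerate layers with $\nu_j=0$ --- together with the globalisation step: organising the level set $\{\tilde f_n=0\}$ into exactly the branches $Y_\ell$ with the stated endpoint behaviour, which leans on simplicity of the zeros \emph{everywhere} in the strip and on the divergence of $N(\omega,y)$ as $\omega\to+\infty$ near $y=1/C_0$; the latter is precisely why $C_0<\widetilde{C}_2$ is assumed when $n\geq3$.
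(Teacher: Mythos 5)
Your proof is correct in outline and, at its core, runs parallel to the paper's: analytic branches obtained from simplicity of the zeros plus an implicit-function argument, strict monotonicity of $k_\ell(\omega)/\omega$ from the Feynman--Hellmann-type formula $k'(\omega)=\omega\int_0^{+\infty}\rho\phi^2\,/\,(k\int_0^{+\infty}\mu\phi^2)$ whose final positivity rests on the identity $\int_0^{+\infty}(\rho\omega^2-\mu k^2)\phi^2=\int_0^{+\infty}\mu(\phi')^2>0$ --- exactly the paper's inequality~\eqref{Proof_prop_monotonicity_nplus1_ineq_norm_eigenequation} --- and globalisation by endpoint analysis plus the divergence of the zero count near $1/C_0$. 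The genuinely different ingredient is your Wronskian/Green's-identity computation of $\partial_k\beta_{n+1}$ and $\partial_\omega\beta_{n+1}$ (the identification $(P_n,Q_n)=(\phi(H_{n+1}),\mu\phi'(H_{n+1}))$ and $f_n=2\mu_\infty\nu_\infty e^{\nu_\infty H_{n+1}}\beta_{n+1}$ is correct, and the surviving boundary term at infinity is indeed $2\mu_\infty\nu_\infty\alpha_{n+1}\partial\beta_{n+1}$ with $\alpha_{n+1}\neq0$ by ODE uniqueness). This single computation delivers both the simplicity of the zeros, which the paper derives from the rank bound $\rank\mathbb{M}_n\geq2n-1$ of Proposition~\ref{Prop_recursive_formula_Dn} (Corollary~\ref{simplicity_k}), and the analyticity of the branches, which the paper gets from Kato's analytic perturbation theory (Proposition~\ref{Prop_regularity_nPlus1_layers}); it is more self-contained and yields the derivative formulas directly.

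There are two points where your argument is genuinely under-justified. First, the finiteness of the zeros of $\tilde f_n(\omega,\cdot)$ at fixed $\omega$, which the paper isolates as Proposition~\ref{Lemma_finite_number_nPlus1_layers}. Your observation that $\tilde f_n(\omega,\cdot)$ is real-analytic on $(1/C_\infty,1/C_0)$ because the transfer-matrix entries are entire in $\nu_j^2$ is correct and neatly disposes of the interior points $1/C_j$, and positivity of $\tilde f_n$ at $y=1/C_0$ rules out accumulation at the right endpoint; but real-analyticity on the open interval does not exclude zeros accumulating at $y=1/C_\infty$, where $\bar\nu_\infty$ is not analytic. This must be ruled out, e.g.\ by noting that such accumulation would force the entire function $\mu_\infty^2(y^2-C_\infty^{-2})\tilde P_n^2-\tilde Q_n^2$ to vanish identically, which is impossible by comparing the parities of the orders of vanishing of the two terms at $y=1/C_\infty$ (the paper instead shows non-vanishing along a complex ray). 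Second, at the right end of the globalisation you invoke Proposition~\ref{Prop_asymptotics_N}; as stated this is circular, since the count $N(\omega,y)$ of Definition~\ref{Def_number_branches_above_nPlus1_layers} presupposes the branch structure being established. What you may use is the fixed-$y$ statement (Proposition~\ref{Prop_infinitely_many_zeros_close_to_1overC0}, via Lemma~\ref{Lemma_zeros_ftilde_any_y_nPlus1_layers} when $n\geq3$), and you then still need the paper's short contradiction argument --- a horizontal band $[L_p^+,L]$ meeting $[1/\widetilde{C}_2,1/C_0)$ that would be free of zeros for all large $\omega$ --- to convert ``arbitrarily large $\omega$-zeros at each level $y$'' into ``no branch can have $\sup_\omega k_\ell(\omega)/\omega<1/C_0$''. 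Both repairs are routine, so I would call the proposal correct modulo these two fixes.
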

The strategy is to first prove that the $k_\ell$'s are in finite number for any fixed $\omega$. Then, to deduce from it their regularity by analytic perturbation theory and finally to prove that the derivative of $k_\ell(\omega)/\omega$ is positive.

We believe the restriction $C_0<\widetilde{C}_2$ if $n\geq3$ to be purely technical and, in any case, it is only needed for the proof of Lemma~\ref{Lemma_zeros_ftilde_any_y_nPlus1_layers}.

First, we have that at any fixed $\omega$ the $k_\ell(\omega)$'s are in finite number.
\begin{proposition}\label{Lemma_finite_number_nPlus1_layers}
    Let $n\in\N\setminus\{0\}$, $\omega>0$, and the $k_\ell(\omega)$'s be as in Definition~\ref{Def_kell_nPlus1_layers}. Then, there is a finite number of $k_\ell(\omega)$'s.
\end{proposition}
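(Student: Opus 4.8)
The plan is to fix $\omega>0$ and reduce, via Lemma~\ref{Alt_Def_kell_nPlus1_layers}, the statement to the claim that $k\mapsto f_n(\omega,k)$ has only finitely many zeros in the bounded interval $[\omega/C_\infty,\omega/C_0)$. The tool is the identity theorem for real-analytic functions of one variable: once we know that $f_n(\omega,\cdot)$ is real-analytic and not identically zero on an open interval, its zeros there are isolated, hence finite on every compact subinterval. So the work lies in establishing analyticity on a large enough interval, ruling out the identically-zero case, and controlling the two endpoints $\omega/C_\infty$ and $\omega/C_0$, near which analyticity may fail.

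First I would observe that, for $\omega$ fixed, the entries of each matrix $M_m$, $1\leq m\leq n$, are \emph{entire} functions of $k$: $\cosh[\nu_m T_m]$, $\sinh[\nu_m T_m]/(\mu_m\nu_m)$ and $\mu_m\nu_m\sinh[\nu_m T_m]$ are power series in $\nu_m^2=k^2-\omega^2/C_m^2$, which is a polynomial in $k$, and at $\nu_m=0$ these series reduce precisely to the entries of the $\nu_m=0$ form of $M_m$ in~\eqref{Def_Mm}; in particular no case distinction is needed. Hence $P_n(\omega,\cdot)$ and $Q_n(\omega,\cdot)$ are entire in $k$. Since $\nu_\infty=\nu_{n+1}=C_\infty^{-1}\sqrt{C_\infty^2 k^2-\omega^2}$ is real-analytic and positive on $(\omega/C_\infty,+\infty)$, the function $f_n(\omega,\cdot)=\mu_\infty\nu_\infty P_n+Q_n$ is real-analytic on $(\omega/C_\infty,+\infty)$. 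It is moreover strictly positive on $(\omega/C_0,+\infty)$: there $\nu_j>0$ for every $j\in\llbracket1,n+1\rrbracket$, so the same induction as in the proof of Lemma~\ref{Alt_Def_kell_nPlus1_layers} gives $P_n\geq1$ and $Q_n\geq0$, whence $f_n\geq\mu_\infty\nu_\infty>0$. In particular $f_n(\omega,\cdot)\not\equiv0$, so its zeros are isolated in $(\omega/C_\infty,+\infty)$; thus they are finite in every compact subinterval, and none accumulate at the interior point $\omega/C_0$.

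There remains the left endpoint $\omega/C_\infty$, the genuinely delicate point. Setting $t:=k-\omega/C_\infty$ and factoring $\nu_\infty=\sqrt{t}\,\psi(k)$ with $\psi$ real-analytic and positive near $k=\omega/C_\infty$, one gets $f_n(\omega,k)=g(k)+\sqrt{t}\,h(k)$ with $g:=Q_n(\omega,\cdot)$ and $h:=\mu_\infty\psi\,P_n(\omega,\cdot)$ both real-analytic near $\omega/C_\infty$. Since the $M_m$ are invertible ($\det M_m=1$), $(P_n,Q_n)$ never vanishes, so $g$ and $h$ are not both identically zero near $\omega/C_\infty$; writing $g(k)=t^a g_1(k)$ and $h(k)=t^b h_1(k)$ with $a,b$ nonnegative integers and $g_1,h_1$ real-analytic and nonzero at $\omega/C_\infty$ (omitting an identically-vanishing term), and noting that $a$ is an integer while $b+\tfrac12$ is not, we obtain $f_n(\omega,k)=t^{\min(a,\,b+1/2)}\bigl(c+o(1)\bigr)$ with $c\neq0$ as $t\to0^+$. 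Hence $f_n(\omega,k)\neq0$ for all small $t>0$, so no zeros accumulate at $\omega/C_\infty$ either.

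Combining: for $\delta>0$ small, $f_n(\omega,\cdot)$ has finitely many zeros in the compact interval $[\omega/C_\infty+\delta,\omega/C_0]$ (a compact subinterval of the analyticity region), none in $(\omega/C_\infty,\omega/C_\infty+\delta)$, and at most one at $k=\omega/C_\infty$; so there are finitely many zeros in $[\omega/C_\infty,\omega/C_0)$, and by Lemma~\ref{Alt_Def_kell_nPlus1_layers} finitely many $k_\ell(\omega)$'s. The main obstacle is precisely the square-root branch point of $f_n$ at $\omega/C_\infty$: away from it the argument is routine real-analytic zero-counting, whereas there one must exploit the mismatch between the integer vanishing order of $Q_n$ and the half-integer order of $\sqrt{t}\,P_n$ to preclude an accumulation of zeros.
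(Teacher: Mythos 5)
Your proof is correct, but it takes a genuinely different route from the paper. The paper argues by contradiction in the complex domain: it supposes the zeros of $k\mapsto f_n(\omega,k)$ accumulate at some $\omega k_\star$, carefully chooses $C_\pm$ so that a fixed determination of each $\bar\nu_j$ makes $y\mapsto f_n(\omega,\omega y)$ holomorphic on a complex neighbourhood $\Omega$ of $k_\star$ (the point being that the dependence is only through $\bar\nu_j^2$), invokes the Identity Theorem to get $f_n(\omega,\omega\,\cdot)\equiv0$ on $\Omega$, and finally refutes this by an asymptotic evaluation along the complex curve $z^2=C_\infty^{-2}+i\omega^{-2}\eta^2$, $\eta\to+\infty$, which requires computing the constant $K_n$. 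You instead stay on the real line: you use the same underlying observation (entries of $M_m$ are entire in $\nu_m^2$, hence $P_n,Q_n$ are entire in $k$, with no case distinction at $\nu_m=0$) to get real-analyticity of $f_n(\omega,\cdot)$ on $(\omega/C_\infty,+\infty)$, you rule out identical vanishing for free by reusing the positivity argument $P_n\geq1$, $Q_n\geq0$ on $(\omega/C_0,+\infty)$ from Lemma~\ref{Alt_Def_kell_nPlus1_layers}, and you treat the square-root branch point at $k=\omega/C_\infty$ by a Puiseux-type leading-order argument, where the invertibility of the $M_m$'s guarantees $(P_n,Q_n)\neq(0,0)$ and the mismatch between the integer order of $Q_n$ and the half-integer order of $\nu_\infty P_n$ forces a nonzero leading coefficient. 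Your version is shorter and more elementary (no holomorphic extension, no computation of $K_n$), and it handles the possible accumulation at the endpoint $\omega/C_\infty$ more explicitly than the paper does; the paper's complex-analytic detour buys a uniform treatment of all possible accumulation points, including those at interior branch points $1/C_j$, without any expansion in fractional powers. Both arguments are sound.
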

The idea of the proof is to extend holomorphically $f_n$
to (part of) the complex plane and to prove that one of the $k_\ell$ being an accumulation points of the kernel of $y\mapsto f_n(\omega, y)$ would yield a contradiction on this extension.
\begin{proof}
    We fix $\omega>0$ and, for shortness, we omit the dependence in $\omega$ of the $k_\ell$'s in this proof.
    The $k_\ell$'s being bounded (Lemma~\ref{Alt_Def_kell_nPlus1_layers}), them being in finite number is equivalent to them being isolated.
    
    Assume on the contrary that there exists a $k_\ell$, denoted $\omega k_\star$, that is an accumulation point of the kernel of $y\mapsto f_n(\omega, y)$, i.e., of the set $\{k_\ell\}_{j\geq1}$. Since the $k_\ell$'s are simple (Corollary~\ref{simplicity_k}) ---that is, $\ell\neq\ell' \Rightarrow k_\ell\neq k_{\ell'}$---, the accumulation point $\omega k_\star$ is in particular simple\footnote{Finite multiplicity would actually be enough.} and there exists a subsequence (denoted the same) such that $k_\ell\to \omega k_\star$ as $\ell\to+\infty$ but $k_\ell\neq \omega k_\star$ for all $\ell$.
    
    We now define $C_-<C_+$ such that $\{k_\ell\}_{j\geq1} \cap [\omega/C_+, \omega/C_-]$ also admits $\omega k_\star$ as an accumulation point, that $1/C_j \not\in (1/C_+, 1/C_-)$ for all $j$, and that $C_0<C_-<C_+<C_\infty$. To define them properly, we distinguish two cases: either $k_\star \neq 1/C_j$ for all $j$ or $k_\star = 1/C_j$ for some $j$, in which case we have $C_\infty\neq C_j\neq C_0$ by~\eqref{nPlus1_layers_relation_defining_Love_waves}. See Figure~\ref{Fig_Def_Cminus_Cplus_nplus1} for a sketch of the definition. On the one hand, when $k_\star \neq 1/C_j$ for all $j$, then the required properties are satisfied by $C_- := k_\star - \epsilon < C_+ := k_\star + \epsilon$ for $\epsilon>0$ small enough. On the other hand, when there exists a $C_j$ such that $k_\star= 1/C_j$, we distinguish two subcases. Either $\{k_\ell\}_{j\geq1} \cap [\omega k_\star, \omega k_\star + 1)$ admits $\omega k_\star$ as an accumulation point, in which case we define $C_+ := C_j = 1 / k_\star$ and $C_-$ as the largest $C_j$'s such that $k_\star< 1/C_j$, with the special case that, if this results in $C_-=C_0$ ---hence $k_\star < 1/C_0$---, then we replace the value of $C_-$ by $C_-=2/(k_\star + 1/C_0)$ in order to ensure $C_0<C_-<C_+<C_\infty$. Or $\omega k_\star$ is not an accumulation point of $\{k_\ell\}_{j\geq1} \cap [\omega k_\star, \omega k_\star + 1)$, then it is one of $\{k_\ell\}_{j\geq1} \cap (\omega k_\star - 1, \omega k_\star]$ and we define $C_- := C_j = 1 / k_\star$ and $C_+$ as the smallest $C_j$'s such that $k_\star> 1/C_j$, with the special case that, if this results in $C_+=C_\infty$ ---hence $k_\star > 1/C_\infty$---, then we replace the value of $C_+$ by $C_+=2/(k_\star + 1/C_\infty)$ in order to ensure $C_0<C_-<C_+<C_\infty$.
    \begin{figure}[!htp]
        \centering
        \includegraphics[width=\textwidth]{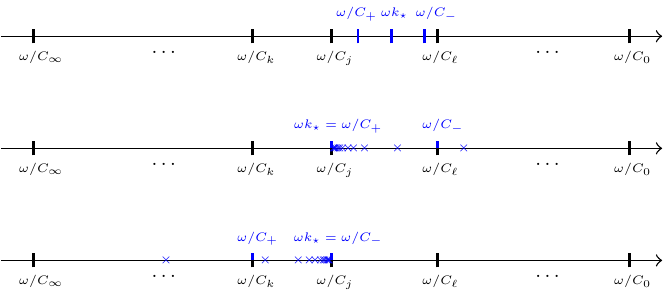}
        \caption{Sketch of the definition of $C_\pm$. Top: $k_\star \neq 1/C_j$ for all $j$.\\Center \& bottom: $k_\star = 1/C_j$ for some $j$, the two subcases.}
        \label{Fig_Def_Cminus_Cplus_nplus1}
    \end{figure}
    
    Next, we consider $f_n$ restricted to the interval $[\omega/C_+, \omega/C_-]$, which is a subset of $(\omega / C_\infty, \omega / C_0)$ due to the careful definitions of $C_\pm$. That is, working with $k_\ell/\omega$ instead of~$k_\ell$ itself, we consider the function $h_n: [1/C_+, 1/C_-] \to \R$ defined by $h_n(y) := f_n(\omega, \omega y)$. Recalling the definition~\eqref{Def_nu_bar} of $\bar\nu_j$, every $\nu_j$ appearing in $h_n$ is of the form $\nu_j(\omega, \omega y)=\omega\bar\nu_j(y)$.
    The fact that we restrict $h_n$ to $[1/C_+, 1/C_-]$ implies that each $\bar\nu_j$ is either real or purely imaginary on the whole interval $[1/C_+, 1/C_-]$.
    
    We now define $\Omega\subset\C$ such that $\C\setminus\R\subset\Omega$, that there exists a neighborhood of $k_\star$ inside~$\Omega$, and that the extension of $h_n$ (hence keeping the definitions of the $\bar\nu_j$'s fixed to their definition on $[1/C_+, 1/C_-]$) to $\Omega$ is holomorphic. We choose
    \[
        \Omega:=\C\setminus((-\infty, 1/C_+ - \delta]\cup[ 1/C_- + \delta, +\infty)) = (\C\setminus\R) \cup(1/C_+ - \delta, 1/C_- + \delta)\,,
    \]
    with $\delta=0$ if there are no $j$'s such that $k_\star=1/C_j$ and with $0<\delta<1/C_+-1/C_\infty$ otherwise.
    The key point for our argument is that when $k_\star=1/C_j$, even though the corresponding ``fixed'' $\bar\nu_j$ is not holomorphic on any complex (open) neighborhood of $k_\star$ (because $\bar\nu_j$ is, on $\Omega$, either the function $\sqrt{\cdot - C_j^{-2}}$ or the function $-i\sqrt{C_j^{-2} - \cdot}$), the function $h_n$ on the other hand is holomorphic on $\Omega$ in both cases as a sum-product of the ``fixed'' functions $\bar\nu_\infty$, $\cosh[\omega\bar\nu_j T_j]$, $\sinh[\omega\bar\nu_j T_j]/(\omega\bar\nu_j)$, and $\omega\bar\nu_j \sinh[\omega\bar\nu_j T_j]$, which are all holomorphic on $\Omega$: for $\bar\nu_\infty$, it is because the square root function is holomorphic on $\C\setminus(-\infty,0]$ and because $1/C_+ - \delta>1/C_\infty$; for $\cosh[\omega\bar\nu_j T_j]$ and $\sinh[\omega\bar\nu_j T_j]/(\omega\bar\nu_j)$ because of the well-known properties that $z\mapsto \cosh[\sqrt{z}]$ and $z\mapsto \sinh[\sqrt{z}] / \sqrt{z}$ are holomorphic on $\C$; and for $\omega\bar\nu_j \sinh[\omega\bar\nu_j T_j] = (\omega\bar\nu_j)^2 \sinh[\omega\bar\nu_j T_j]/(\omega\bar\nu_j)$ as the product of two holomorphic functions on $\C$.
    
    Given that $\Omega$ is a non-empty, connected, open subset of the complex plane, that $h_n$ is holomorphic on $\Omega$, and that the set $\{z\in\Omega: h_n(z)=0\}$ contains an accumulation point ---namely, $k_\star$---, the Identity Theorem implies that $h_n\equiv0$ on $\Omega$.
    
    We are now left with proving that $h_n$ cannot be trivial everywhere on~$\Omega$. Take $z\in\C$ such that $z^2=C_\infty^{-2} + i\omega^{-2}\eta^2$, $\eta>0$. Then, $z\in\Omega$ and
    \[
        \nu_j(\omega, \omega z) = \omega \bar\nu_j(z) = \sqrt{i\eta^2} = \eta e^{i\frac{\pi}{4}}
    \]
    for the $j$'s such that $C_j=C_\infty$, while for the other $j$'s we have
    \[
        \nu_j(\omega, \omega z) = \omega \bar\nu_j(z) =
        \left\{
            \begin{aligned}
                &\omega \sqrt{z^2-C_j^{-2}}\\
                \text{or}&\\
                -i&\omega \sqrt{C_j^{-2}-z^2}
            \end{aligned}
        \right.
        =
        \left\{
            \begin{aligned}
                &\eta \sqrt{+i + \frac{C_\infty^{-2}-C_j^{-2}}{(\eta/\omega)^2} }\\
                \text{or}&\\
                -i&\eta \sqrt{-i - \frac{C_\infty^{-2}-C_j^{-2}}{(\eta/\omega)^2}}
            \end{aligned}
        \right.
        \underset{\eta\to+\infty}{\sim}
        \left\{
            \begin{aligned}
                +&\eta e^{i\frac{\pi}{4}}\\
                \text{or}&\\
                -&\eta e^{i\frac{\pi}{4}}
            \end{aligned}
        \right..
    \]
    In particular, defining $\epsilon_j\in\{\pm1\}$ such that $\nu_j(\omega, \omega z) = \omega \bar\nu_j(z) \sim \epsilon_j \eta e^{i\frac{\pi}{4}}$, we have
    \begin{align*}
        |\cosh[ \omega \bar\nu_j(z) T_j ]| \sim \left| \cosh\!\left[ \epsilon_j \eta T_j e^{i\frac{\pi}{4}} \right] \right| &= \left| \cos[ \eta T_j/\sqrt{2} ] \cosh[ \eta T_j/\sqrt{2} ] +i \sin[ \eta T_j/\sqrt{2} ] \sinh[ \eta T_j/\sqrt{2} ] \right| \\
        &= \sqrt{ \cosh^2[ \eta T_j/\sqrt{2} ] - \sin^2[ \eta T_j/\sqrt{2} ] } > 0
    \end{align*}
    and, as $\eta\to+\infty$,
    \[
        \omega \bar\nu_j(z) \tanh[ \omega \bar\nu_j(z) T_j ] \sim +\eta e^{i\frac{\pi}{4}}\,.
    \]
    Therefore, for these $z$, and when $\eta\to+\infty$, we have the contradiction to $h_n(z)=0$ that
    \[
        \frac{h_n(z)}{ \prod\limits_{j=1}^n \cosh[ \omega \bar\nu_j(z) T_j ] } \sim K_n \eta e^{i\frac{\pi}{4}} \neq 0\,,
    \]
    where
    \begin{align*}
        K_n := \eta^{-1} e^{-i\frac{\pi}{4}} \begin{pmatrix} \mu_{n+1} \eta e^{i\frac{\pi}{4}} \\ 1 \end{pmatrix} \cdot \left(\left[
        \prod\limits_{m=1}^n
        \begin{pmatrix}
            1 & (\mu_m \eta e^{i\frac{\pi}{4}})^{-1} \\
            \mu_m \eta e^{i\frac{\pi}{4}} & 1
        \end{pmatrix} \right]
        \begin{pmatrix} 1 \\ 0 \end{pmatrix} \right)
    \end{align*}
    is a positive constant depending only on the $\mu_j$'s, $j\in\llbracket1,n+1\rrbracket$. In particular, it is independent of $\eta$ and of $e^{i\frac{\pi}{4}}$, since an easy induction gives
    \[
        \prod\limits_{m=1}^n \begin{pmatrix}
            1 & (\mu_m \eta e^{i\frac{\pi}{4}})^{-1} \\
            \mu_m \eta e^{i\frac{\pi}{4}} & 1
        \end{pmatrix}
        =
        \begin{pmatrix}
            K_n^{11} & K_n^{12} (\eta e^{i\frac{\pi}{4}})^{-1} \\
            K_n^{21} \eta e^{i\frac{\pi}{4}} & K_n^{22}
        \end{pmatrix},
    \]
    where the $K_n^{ij}$'s are independent of $\eta$ and of $e^{i\frac{\pi}{4}}$.
\end{proof}
\begin{remark*}
    For instance, for $n=2$, $K_2 = \mu_1 + \mu_2 + \mu_3 + \mu_1 \mu_3 / \mu_2 > 0$.
\end{remark*}

We now state our regularity result.
\begin{proposition}\label{Prop_regularity_nPlus1_layers}
    Let $n\in\N\setminus\{0\}$. The branches $\omega \mapsto (k(\omega),\phi(\omega))$ satisfying~\eqref{Problem_equations_new} exist on an open interval, and both components $k$ and $\phi$ are analytic. Moreover, the branches of eigenvalues do not cross.
\end{proposition}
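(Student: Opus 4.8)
The plan is to recast \eqref{Problem_equations_new}, with $\omega$ as parameter and $-k^2$ as spectral parameter, as a self-adjoint eigenvalue problem, and then to invoke analytic perturbation theory~\cite{Kato} together with the simplicity of Love waves (Corollary~\ref{simplicity_k}). On the weighted space $\mathcal H:=L^2((0,+\infty);\mu\di z)$ I would introduce, for $\omega>0$, the self-adjoint operator $B(\omega)$ associated with the closed, semibounded form $\phi\mapsto\int_0^{+\infty}\mu|\phi'|^2\di z-\omega^2\int_0^{+\infty}\rho|\phi|^2\di z$ with form domain $H^1((0,+\infty))$; concretely $B(\omega)\phi=-\mu^{-1}(\mu\phi')'-(\omega^2/C^2)\phi$, with $\phi'(0)=0$ and the transmission conditions ``$\phi,\mu\phi'\in\mathcal C$'' encoded in the operator domain rather than imposed by hand. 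By construction, $\phi\not\equiv0$ solves \eqref{Problem_equations_new} at $(\omega,k)$ if and only if $-k^2$ is an eigenvalue of $B(\omega)$ with eigenfunction $\phi$. Since $(\mu,C)\equiv(\mu_\infty,C_\infty)$ on $[H_{n+1},+\infty)$, a standard decomposition argument yields $\sigma_{\mathrm{ess}}(B(\omega))=[-\omega^2/C_\infty^2,+\infty)$, so that the relevant eigenvalues are exactly those lying strictly below $-\omega^2/C_\infty^2$ ---that is, $k>\omega/C_\infty$, which is the constraint in \eqref{nPlus1_layers_relation_defining_Love_waves}. Finally, $B(\omega)=B_0-\omega^2W$ with $B_0:=B(0)\geq0$ fixed and $W$ the bounded positive operator of multiplication by $1/C^2$, so $\{B(\omega)\}$ is a self-adjoint holomorphic family of type~(A) with $\omega$-independent operator domain $D(B_0)$.

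Next I would fix $\omega_0>0$ and a Love wave at $(\omega_0,k_0)$. By \eqref{nPlus1_layers_relation_defining_Love_waves}, $\omega_0/C_\infty<k_0<\omega_0/C_0$, so $-k_0^2$ is an eigenvalue of $B(\omega_0)$ sitting a positive distance below $\sigma_{\mathrm{ess}}(B(\omega_0))$, and it is simple by Corollary~\ref{simplicity_k}. As $\omega\mapsto-\omega^2/C_\infty^2=\inf\sigma_{\mathrm{ess}}(B(\omega))$ is continuous and $\omega\mapsto B(\omega)$ is holomorphic, for $\omega$ near $\omega_0$ the spectral projection $P(\omega)$ of $B(\omega)$ onto the part of the spectrum near $-k_0^2$ has constant rank one and is analytic; Rellich's theorem~\cite{Kato} then provides an analytic branch $\omega\mapsto\lambda(\omega)$ of simple eigenvalues with $\lambda(\omega_0)=-k_0^2$ together with an analytic branch of eigenfunctions $\omega\mapsto\phi(\omega)\in D(B_0)$. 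Since $\lambda(\omega)=-k(\omega)^2<-\omega^2/C_\infty^2<0$ near $\omega_0$, the function $k(\omega):=(-\lambda(\omega))^{1/2}>\omega/C_\infty$ is analytic; fixing a normalisation (e.g.\ $\norm{\phi(\omega)}_{\mathcal H}=1$ and $\phi(\omega,0)>0$, legitimate because, $\phi'(0)=0$ holding, $\phi(\omega,0)=0$ would force $\phi(\omega)\equiv0$) makes $\omega\mapsto\phi(\omega)$ analytic, while $D(B_0)\subset\mathcal C([0,+\infty))$ and the exponential decay of sub-threshold eigenfunctions give $\phi(\omega)\in\mathcal C([0,+\infty))$ with $\lim_{+\infty}\phi(\omega)=0$. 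Thus through every Love wave point passes an analytic branch $\omega\mapsto(k(\omega),\phi(\omega))$ of solutions of \eqref{Problem_equations_new}; continuing it maximally along the real line, its domain $I$ is an open interval (at a finite endpoint one has $k(\omega)\to\omega/C_\infty$, the eigenvalue merging into the essential spectrum, which is not attained as an eigenvalue; and by Proposition~\ref{Lemma_finite_number_nPlus1_layers} the continuation meets no other obstruction). The precise value of the left endpoint is determined later in Theorem~\ref{Thm_monotonicity_nPlus1_layers}.

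For the non-crossing, suppose two such branches $\omega\mapsto(k_a(\omega),\phi_a(\omega))$ and $\omega\mapsto(k_b(\omega),\phi_b(\omega))$ satisfied $k_a(\omega_1)=k_b(\omega_1)=:k_1$ at some common point $\omega_1$. Then $-k_1^2$ is a simple eigenvalue of $B(\omega_1)$ (Corollary~\ref{simplicity_k}), hence by the local uniqueness of the analytic continuation of a simple isolated eigenvalue (Rellich, \cite{Kato}) the branches $-k_a(\cdot)^2$ and $-k_b(\cdot)^2$ coincide near $\omega_1$; the identity theorem on their connected common domain then gives $k_a\equiv k_b$ (and $\phi_a\equiv\phi_b$ up to the fixed normalisation), so the two branches are one and the same. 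Therefore distinct branches have disjoint graphs, i.e.\ the branches of eigenvalues do not cross.

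The step I expect to be the main obstacle is not the perturbation theory itself but the operator-theoretic groundwork made necessary by the discontinuity of $\mu$ and $\rho$: pinning down the correct self-adjoint realisation (with ``$\phi,\mu\phi'\in\mathcal C$'' sitting inside $D(B_0)$) and establishing $\sigma_{\mathrm{ess}}(B(\omega))=[-\omega^2/C_\infty^2,+\infty)$, so that Rellich's and Kato's results apply to an \emph{isolated} eigenvalue whose distance to the essential spectrum itself drifts with $\omega$. Once this is in place, simplicity (Corollary~\ref{simplicity_k}) does the rest, excluding branch points and forcing non-crossing.
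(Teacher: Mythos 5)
Your proposal is correct and follows essentially the same route as the paper: the paper's (sketched) proof also recasts \eqref{Problem_equations_new} as a generalized eigenvalue problem $Tu=\lambda Au$ with $T=-\partial_z\mu\partial_z-\omega^2\rho$, $A=-\mu$, reduces it via \cite[Ch.~7 \S6]{Kato} to a standard self-adjoint family (your weighted space $L^2(\mu\di z)$ is the concrete realisation of that reduction), and then invokes simplicity (Corollary~\ref{simplicity_k}), isolation (Proposition~\ref{Lemma_finite_number_nPlus1_layers}), and Kato--Rellich to get analytic, non-crossing branches. You merely supply details the paper leaves implicit (essential spectrum, type~(A) structure, normalisation of the eigenfunctions), all of which are sound.
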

For this result, we use the fact that, for the $k_\ell$'s restricted to $(\omega/C_\infty, \omega/C_0)$, $k_\ell^2$ are (pseudo-)eigenvalues of the problem in~\eqref{Problem_equations_intro} and we apply analytic perturbation theory. This theory being nowadays standard, for shortness we only give a sketch of the prove.
\begin{proof}[Sketch of the proof]
    First, by Proposition~\ref{nPlus1_layers_equivalence_kell_Love_waves}, for any fixed $\omega$ we identify 
    \[
        \{ k(\omega) : \exists\, \phi_\omega\in L^2\,, \, (k(\omega),\phi_\omega) \text{ satisfies~\eqref{Problem_equations_new}} \} = \{k_\ell(\omega) : k_\ell(\omega)\neq \omega/C_\infty \}_{\ell\geq1}\,.
    \]
    Consequently, for the rest of this sketch we write $(k_\ell(\omega),\phi_{\ell,\omega})$ instead of $(k(\omega),\phi(\omega))$.
    
    A point to be careful about is that we do not have a (proper) eigenvalue problem, but a generalized one, due to the function $\mu$ multiplying the (pseudo-)eigenvalue $k_\ell^2$. However, writing~\eqref{Problem_equations_new} as $T u=\lambda A u$ with $T= -\partial_z \mu \partial_z - \omega^2 \rho$, $\lambda = k_\ell^2$, and $A = -\mu$, we can apply the arguments of~\cite[Chapter 7 §6]{Kato} to bring ourselves back to the standard analytic perturbation theory.

    Now, by Corollary~\ref{simplicity_k} and Proposition~\ref{Lemma_finite_number_nPlus1_layers}, the eigenvalues are simple and isolated. Therefore, by analytic perturbation theory (see, e.g., \cite[Theorem XII.8 (Kato--Rellich theorem)]{ReeSim4} or the first sections of~\cite[Chapter 7]{Kato}), the branches $\omega \mapsto (k_\ell(\omega)^2,\phi_{\ell,\omega})$ exist on an open interval, both components $\omega \mapsto k_\ell(\omega)^2$ and $\omega \mapsto \phi_{\ell,\omega}$ are analytic, and $k_\ell(\omega)^2$ are simple and isolated. In particular, we have no crossing of branches of eigenvalues.
    
    We then conclude by noticing that, since $k_\ell(\omega) > \omega/C_\infty>0$ by Lemma~\ref{Alt_Def_kell_nPlus1_layers}, the analyticity of $k_\ell^2>0$ implies the one of $k_\ell=\sqrt{k_\ell^2}$.
\end{proof}

With all these results, we can now prove Theorem~\ref{Thm_monotonicity_nPlus1_layers}.
\begin{proof}[Proof of Theorem~\ref{Thm_monotonicity_nPlus1_layers}]
    The second part of the statement is a direct consequence of the first one combined with Proposition~\ref{nPlus1_layers_equivalence_kell_Love_waves}.

    The analyticity has been proved in Proposition~\ref{Prop_regularity_nPlus1_layers}. The proof of the rest of the first part is split into two steps. First, we prove the strict monotonicity of $k_\ell(\omega)/\omega$ on any open interval where $k_\ell$ exists, using the ``(pseudo-)eigenvalue'' property of the $k_\ell$. Second, using their ``zeros of a function'' property, we deduce the bijectivity.
    
    \textbf{Step 1.} We start again by identifying, for any $\omega$,
    \[
        \{k_\ell(\omega) : k_\ell(\omega)\neq \omega/C_\infty \}_{\ell\geq1} = \{ k(\omega) : \exists\, \phi(\omega)\,, \, (k(\omega),\phi(\omega)) \text{ satisfies~\eqref{Problem_equations_new}} \}
    \]
    by Proposition~\ref{nPlus1_layers_equivalence_kell_Love_waves}. We thus write $(k_\ell(\omega),\phi_{\ell,\omega})$ instead of $(k(\omega),\phi(\omega))$.
    
    By definition of $\phi_{\ell,\omega}$, we have $-\partial_z \mu \partial_z \phi_{\ell,\omega} - \omega^2 \rho \phi_{\ell,\omega} = -k_\ell^2(\omega) \mu \phi_{\ell,\omega}$. Hence,
    \begin{equation}\label{Proof_prop_monotonicity_nplus1_ineq_norm_eigenequation}
        -k_\ell^2(\omega) \norm{\sqrt{\mu} \phi_{\ell,\omega}}_2^2 = \norm{ \sqrt{\mu} \phi_{\ell,\omega}' }_2^2 - \omega^2 \norm{ \sqrt{\rho} \phi_{\ell,\omega} }_2^2 > - \omega^2 \norm{ \sqrt{\rho} \phi_{\ell,\omega} }_2^2\,,
    \end{equation}
    with a strict inequality as otherwise we would have $\mu_- \normNS{ \phi_{\ell,\omega}' }_2^2 \leq \normNS{ \sqrt{\mu} \phi_{\ell,\omega}' }_2^2 = 0$ hence $\phi_{\ell,\omega}$ constant, since $\mu_->0$, and the conditions on $\phi_{\ell,\omega}$ in~\eqref{Problem_equations_new} would yield $\phi_{\ell,\omega}=0$, a contradiction to its definition.
    
    By Proposition~\ref{Prop_regularity_nPlus1_layers}, we can differentiate both sides of the equality in~\eqref{Proof_prop_monotonicity_nplus1_ineq_norm_eigenequation} w.r.t.~$\omega$:
    \[
        -2 k_\ell(\omega) \partial_\omega k_\ell(\omega) \norm{\sqrt{\mu} \phi_{\ell,\omega}}_2^2 = - 2 \omega \norm{ \sqrt{\rho} \phi_{\ell,\omega} }_2^2\,,
    \]
    where we used again the eigenvalue equation to cancel terms.
    Consequently, and since $k_\ell(\omega) \normNS{\sqrt{\mu} \phi_{\ell,\omega}}_2^2>0$ and $\omega \normNS{ \sqrt{\rho} \phi_{\ell,\omega} }_2^2>0$,
    \[
        \partial_\omega k_\ell(\omega) = \frac{ \omega \norm{ \sqrt{\rho} \phi_{\ell,\omega} }_2^2 }{ k_\ell(\omega) \norm{\sqrt{\mu} \phi_{\ell,\omega}}_2^2 }>0\,.
    \]
    
    We now compute the derivative of $k_\ell(\omega)/\omega$ and obtain
    \[
        \partial_\omega \left(\frac{k_\ell(\omega)}{\omega}\right) = \frac{k_\ell(\omega)}{\omega^2} \left[ \frac{\omega \partial_\omega k_\ell(\omega)}{k_\ell(\omega)} - 1 \right] = \frac{k_\ell(\omega)}{\omega^2} \left[ \frac{ \omega^2 \norm{ \sqrt{\rho} \phi_{\ell,\omega} }_2^2 }{ k_\ell^2(\omega) \norm{\sqrt{\mu} \phi_{\ell,\omega}}_2^2 } - 1 \right] > 0
    \]
    by the inequality in~\eqref{Proof_prop_monotonicity_nplus1_ineq_norm_eigenequation}. This proves that, for any $\ell$, $\omega \mapsto k_\ell(\omega)/\omega$ is strictly increasing on any (open) interval where $k_\ell$ exists.
    
    \textbf{Step 2.}
    Consider an open interval on which $k_\ell$ exists by Proposition~\ref{Prop_regularity_nPlus1_layers}, and let $(\omega_\ell^-,\omega_\ell^+)\subset(0,+\infty)$ be its largest (open) superset on which $k_\ell$ exists.
    Still by Proposition~\ref{Prop_regularity_nPlus1_layers}, $k_\ell$ is continuous on $(\omega_\ell^-,\omega_\ell^+)$, hence $k_\ell(\omega)/\omega$ too, and, by Step 1, $k_\ell(\omega)/\omega$ is strictly increasing on~$(\omega_\ell^-,\omega_\ell^+)$. Moreover, being bounded from below and from above ---with values in $[1/C_\infty, 1/C_0)$, see Lemma~\ref{Alt_Def_kell_nPlus1_layers}---,
    it admits limits $L_\ell^-=\lim_{\omega\searrow\omega_\ell^-}k_\ell(\omega)/\omega$ and $L_\ell^+=\lim_{\omega\nearrow\omega_\ell^+}k_\ell(\omega)/\omega$.
    We now prove that $L_\ell^-=1/C_\infty$, $L_\ell^+=1/C_0$, and $\omega_\ell^+=+\infty$.
    
    First, suppose $\omega_\ell^+<+\infty$. By continuity of $\tilde{f}_n$, we would have
    \[
        \tilde{f}_n(\omega_\ell^+, L_\ell^+) = \tilde{f}_n\!\left( \lim_{\omega\nearrow\omega_\ell^+} (\omega,k_\ell(\omega)/\omega) \right) = \lim_{\omega\nearrow\omega_\ell^+} \tilde{f}_n (\omega, k_\ell(\omega)/\omega) = 0\,.
    \]
    If $L_\ell^+=1/C_0$, this contradicts the fact that $\tilde{f}_n$ has no zeros of the form $(\omega, \omega/C_0)$, and if $L_\ell^+<1/C_0$, this contradicts that $(\omega_\ell^-,\omega_\ell^+)$ is the largest open interval on which $k_\ell$ exists since it would actually exist on $(\omega_\ell^-,\omega_\ell^+]$ and, by Proposition~\ref{Prop_regularity_nPlus1_layers}, also on an open superset of $(\omega_\ell^-,\omega_\ell^+]$. Thus, we proved that $\omega_\ell^+=+\infty$.
    
    Second, suppose $L_\ell^->1/C_\infty$. If $\omega_\ell^->0$, then the continuity of $\tilde{f}_n$ gives similarly the contradiction that $k_\ell$ exists at $\omega_\ell^-$. If $\omega_\ell^-=0$, for any $y>1/C_\infty$, we notice that
    \[
        \tilde{f}_n(\omega, y)\underset{\omega\searrow0}{\sim} \mu_\infty \bar{\nu}_\infty(y) + O(\omega^2)\,,
    \]
    which is positive for $\omega$ small enough, contradicting that $k_\ell$ exists on $(\omega_\ell^-, \omega_\ell^- + \epsilon)=(0, \epsilon)$. Thus, we proved that $L_\ell^-=1/C_\infty$. A by product, as a direct consequence of $L_\ell^-=1/C_\infty$ and the confinuity of $\tilde{f}_n$, is that there exists $\omega_\ell=\omega_\ell^-\geq0$ such that $k_\ell(\omega_\ell)=\omega_\ell/C_\infty$ if $\omega_\ell>0$ and $k_\ell(\omega)\to0$ when $\omega\searrow0$ if $\omega_\ell=0$. Note that we do not have equality in the latter case only because the $k_\ell$'s have been defined only for $\omega>0$.
    
    Third, suppose that there exists a $p\geq1$ such that $L_p^+<1/C_0$, and consider the smallest of such $p$'s. Then, $L_\ell^+\leq L_p^+<1/C_0$ for all $\ell\geq p$, since the branches $k_\ell$ do not cross by Proposition~\ref{Prop_regularity_nPlus1_layers} and are continuous, and $L_\ell^+=1/C_0$ for $1\leq\ell<p$. Thus, on the one hand for $\ell\geq p$ the branches $k_\ell$ lie in $(1/C_\infty,L_p^+)$, while on the other hand for $\ell\geq p$ there exists $L\in(L_p^+,1/C_0)$ ---one can choose $L=(\max\{1/\widetilde{C}_2,L_p^+\}+ 1/C_0)/2$ for example, so that $L>1/\widetilde{C}_2$ as it will be needed--- such that for $\omega$ large enough the branches $k_\ell$ lie in $(L,1/C_0)$. This implies that for $\omega$ large enough none of the branches $k_\ell$, $\ell\geq1$, lies in $[L_p^+, L]$. That is, $\tilde{f}_n$ has no zeros $(\omega, y)$ with $y\in[L_p^+, L]$ and where $[L_p^+, L]\cap [1/\widetilde{C}_2,1/C_0)\neq\emptyset$, contradicting Proposition~\ref{Prop_infinitely_many_zeros_close_to_1overC0} stated below, and consequently proving that $L_\ell^+=1/C_0$ for all $\ell\geq1$.
    This concludes the proof of Theorem~\ref{Thm_monotonicity_nPlus1_layers}.
\end{proof}

\begin{proposition}\label{Prop_infinitely_many_zeros_close_to_1overC0}
    Let $n\in\N\setminus\{0\}$ and  $\tilde{f}_n$ be as in Definition~\ref{Def_f_tilde_P_tilde_Q_tilde_nPlus1_layers}. Assume moreover $C_0<\widetilde{C}_2$ if $n\geq3$. Then, for $y\in[1/\widetilde{C}_2, 1/C_0)$, the function $\omega\mapsto \tilde{f}_n(\omega,y)$ admits a sequence of zeros diverging to infinity.
\end{proposition}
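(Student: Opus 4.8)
The plan is to obtain, for a fixed $y\in[1/\widetilde{C}_2,1/C_0)$, a precise description of $\omega\mapsto\tilde{f}_n(\omega,y)$ as $\omega\to+\infty$: up to an everywhere-positive prefactor it agrees, modulo an exponentially small error, with a genuine non-degenerate sinusoid in $\omega$; sign changes then yield the desired sequence of zeros diverging to $+\infty$. First I would dispose of the trivial case: if $[1/\widetilde{C}_2,1/C_0)=\emptyset$ there is nothing to prove, so assume $C_0<\widetilde{C}_2$ (the standing hypothesis for $n\geq3$, and automatic for $n\leq2$, since otherwise two of the $C_j$ equal $C_0$ and the interval is empty). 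Then $C_0=\widetilde{C}_1$ is attained at a \emph{unique} index $j_0\in\llbracket1,n\rrbracket$ (not at $n+1$, since $C_\infty>C_0$), and $C_m\geq\widetilde{C}_2$ for every $m\neq j_0$. For $y$ in the interior of the interval (I postpone $y=1/\widetilde{C}_2$ to the end), recalling~\eqref{Def_nu_bar} this gives $\bar{\nu}_{j_0}(y)=-\ii\beta$ with $\beta:=\sqrt{C_0^{-2}-y^2}>0$, while $\bar{\nu}_m(y)=\sqrt{y^2-C_m^{-2}}>0$ for every $m\neq j_0$, including $m=\infty$ since $y>1/\widetilde{C}_2\geq1/C_\infty$.

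Next I would rewrite $\tilde{f}_n$ as a matrix product. Using~\eqref{Alt_Def_f_tilde_nPlus1_layers},~\eqref{Formula_Pn_Qn}, and the identities $P_n=M_{11}$, $Q_n=M_{21}$ from the Remark after Proposition~\ref{Prop_recursive_formula_Dn}, and conjugating the transfer matrices $M_m$ (with $\nu_m$ replaced by $\omega\bar{\nu}_m(y)$) by $\mathrm{diag}(1,\omega)$ in order to absorb the $\omega$-scalings hidden in $\tilde{P}_n,\tilde{Q}_n$, one gets
\[
    \tilde{f}_n(\omega,y)=v\transp\,\widehat{M}_n(\omega)\cdots\widehat{M}_1(\omega)\,(1,0)\transp\,,\qquad v:=\bigl(\mu_\infty\bar{\nu}_\infty(y),\,1\bigr)\transp\,,
\]
where now $\omega$ enters $\widehat{M}_m$ only through $\omega\bar{\nu}_m(y)T_m$. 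I would then factor each matrix: for $m\neq j_0$, with $\theta_m:=\bar{\nu}_m(y)T_m>0$, one has $\widehat{M}_m(\omega)=\tfrac12 e^{\omega\theta_m}\Pi_m^{+}+\tfrac12 e^{-\omega\theta_m}\Pi_m^{-}$, where $\Pi_m^{\pm}:=(1,\pm\mu_m\bar{\nu}_m)\transp(1,\pm(\mu_m\bar{\nu}_m)^{-1})$ are fixed rank-one matrices; and for $m=j_0$, with $\psi:=\beta T_{j_0}=|\tilde{\nu}_1(y)|\,\widetilde{T}_1>0$, one has $\widehat{M}_{j_0}(\omega)=E\,R(\omega\psi)\,E^{-1}$ with $E:=\mathrm{diag}(1,-\mu_{j_0}\beta)$ and $R(\cdot)$ the standard $2\times2$ rotation.

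Expanding $\widehat{M}_n\cdots\widehat{M}_1$ and keeping the term that selects $\tfrac12 e^{+\omega\theta_m}\Pi_m^{+}$ from every $m\neq j_0$, one checks --- using that $(1,(\mu_m\bar{\nu}_m)^{-1})(1,\mu_{m'}\bar{\nu}_{m'})\transp=1+\mu_{m'}\bar{\nu}_{m'}/(\mu_m\bar{\nu}_m)>0$, so all the rank-one factors chain into a single nonzero rank-one matrix with \emph{positive} scalar coefficients --- that
\[
    \tilde{f}_n(\omega,y)=\kappa(\omega)\bigl[\,\rho\cos(\omega\psi+\gamma)+\varepsilon(\omega)\,\bigr]\,,
\]
with $\kappa(\omega)=c\,e^{\omega\sum_{m\neq j_0}\theta_m}>0$ for some $c>0$, with $\rho>0$ and $\gamma\in\R$ constants, and with $\varepsilon(\omega)=O(e^{-c_0\omega})\to0$ for some $c_0>0$ (every discarded term carries at least one factor $e^{-2\omega\theta_m}$). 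Here $\rho$ is the amplitude of $q\transp R(\omega\psi)p$ for two explicit nonzero vectors $p$ (built from $(1,0)\transp$, or from $\Pi_{j_0-1}^{+}$) and $q$ (built from $v$, or from $\Pi_{j_0+1}^{+}$), hence $\rho=|p|\,|q|>0$; the situations $j_0=1$, $j_0=n$, $n=1$ are the same computation with empty side-products. Choosing $\omega_k^{+}:=(2k\pi-\gamma)/\psi$ and $\omega_k^{-}:=((2k{+}1)\pi-\gamma)/\psi$, for $k$ large enough that $|\varepsilon|<\rho$ there one gets $\tilde{f}_n(\omega_k^{+},y)=\kappa(\omega_k^{+})(\rho+\varepsilon)>0$ and $\tilde{f}_n(\omega_k^{-},y)=\kappa(\omega_k^{-})(-\rho+\varepsilon)<0$; by continuity of $\omega\mapsto\tilde{f}_n(\omega,y)$ (Lemma~\ref{Continuity_tilde_functions_nPlus1_layers}) and the intermediate value theorem there is a zero between $\omega_k^{+}$ and $\omega_k^{-}$, and these diverge to $+\infty$. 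The endpoint $y=1/\widetilde{C}_2$ is treated by the same scheme: the layers $m$ with $C_m=\widetilde{C}_2$ then have $\bar{\nu}_m(y)=0$ and, by~\eqref{Def_Mm}, contribute fixed unipotent factors (which become $\omega$-affine after conjugation), modifying only the still-positive constants; if moreover $C_\infty=\widetilde{C}_2$ then $v=(0,1)\transp$, which is harmless.

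The main work --- and the only delicate point --- is the product expansion: one must verify that the ``all-$+$'' term is genuinely dominant and, crucially, that the limiting sinusoid is non-degenerate ($\rho>0$) in every configuration of the index $j_0$ and of any degenerate endpoint layers, i.e.\ that none of the chained scalar products can vanish. This rests entirely on the sign positivity of $\mu_m\bar{\nu}_m$ (for the exponential layers) and of $T_m/\mu_m$ (for the unipotent endpoint layers), which forces every scalar appearing in the chain to be $\geq1>0$.
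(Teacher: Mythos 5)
Your proposal is correct, and it rests on the same foundation as the paper's argument --- the transfer-matrix product for $\tilde f_n$, the isolation of the unique oscillatory layer $j_0$ with $C_{j_0}=C_0$ (which is where the hypothesis $C_0<\widetilde C_2$ enters for both of you), and the positivity of everything contributed by the remaining layers --- but the execution differs in a way worth noting. The paper divides by $\prod_{j\neq j_0}\cosh[\omega\bar\nu_jT_j]$ and tracks the four entries of the side products as multivariate polynomials in the $\tanh[\omega\bar\nu_jT_j]$ with nonnegative coefficients, obtaining $\tilde f_n\propto(\alpha\gamma+\beta\delta)\cos[\omega|\bar\nu_{j_0}|T_{j_0}]+(\ldots)\sin[\omega|\bar\nu_{j_0}|T_{j_0}]$ and then arguing from the signs of the two coefficients; this forces a case analysis (where exactly the positivity of $\alpha\gamma+\beta\delta$ comes from, a separate degenerate subcase when $\beta=\gamma=0$, and separate treatments of $n=1$, $n=2$, $n\geq3$ in Sections 4.4, 5, and Lemma~\ref{Lemma_zeros_ftilde_any_y_nPlus1_layers}). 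Your rank-one eigenprojection decomposition $\widehat M_m=\tfrac12e^{\omega\theta_m}\Pi_m^{+}+\tfrac12e^{-\omega\theta_m}\Pi_m^{-}$ together with the identity $(q\cdot p)^2+(q_1p_2-q_2p_1)^2=|p|^2|q|^2$ gives the non-degeneracy $\rho=|p||q|>0$ for free, in all configurations of $j_0$ and uniformly in $n$, which is genuinely slicker for the existence statement at hand (the paper's ``generic'' caveats concern the finer localization of the zeros needed for Weyl's law, not their existence). The one place you are a little glib is the endpoint $y=1/\widetilde C_2$: the layers with $C_m=\widetilde C_2$ contribute unipotent factors whose off-diagonal entry, after absorbing the $\omega$-scaling relating $Q_m$ to $\tilde Q_m$, grows linearly in $\omega$, so your vectors $p$ and $q$ are then $\omega$-dependent rather than ``still-positive constants''. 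This is harmless --- the amplitude $|p(\omega)||q(\omega)|$ stays bounded below and the normalized directions, hence the phase $\gamma(\omega)$, converge as $\omega\to+\infty$, so the intermediate-value argument at the near-extremal points survives --- but it should be said rather than waved at, since it is exactly the degenerate configuration the paper isolates by hand.
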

The cases $n=1,2$ of this porposition will be proved later ---see Section~\ref{Section_asymptotics_N_for_n_equal_1} and Proposition~\ref{Prop_zeros_ftilde_any_y_2Plus1_layers}--- for all $y\in[1/C_\infty, 1/C_0)$. The proof for $n\geq3$ is a direct consequence of the following lemma, which also yields the result of Proposition~\ref{Prop_asymptotics_N} for $n\geq3$.
\begin{lemma}\label{Lemma_zeros_ftilde_any_y_nPlus1_layers}
    Let $n\geq3$. Assume $C_0 < \widetilde{C}_2$ and fix $y\in[1/\widetilde{C}_2, 1/C_0)$.
    \begin{enumerate}[label=(\roman*)]
        \item If $C_0 = C_1$, then on each interval
        \[
            \left[ \frac{p\pi}{|\bar{\nu}_1(y)| T_1}, \frac{(p+1)\pi}{|\bar{\nu}_1(y)| T_1} \right), \qquad p\geq0\,,
        \]
        $\omega\mapsto \tilde{f}_n(\omega,y)$ admits exactly one zero, which belongs to $\left[ \frac{p\pi}{|\bar{\nu}_1(y)| T_1}, \frac{(p+1/2)\pi}{|\bar{\nu}_1(y)| T_1} \right)$.
        
        \item Otherwise, i.e., if $C_0 = C_k$ for $k\in\llbracket 2, n \rrbracket$, then on each interval
        \[
            \left[ \frac{p\pi}{|\bar{\nu}_k(y)| T_k}, \frac{(p+1)\pi}{|\bar{\nu}_k(y)| T_k} \right), \qquad p\geq0\,,
        \]
        $\omega\mapsto \tilde{f}_n(\omega,y)$ admits, at least for large $p$'s, exactly one zero. These zeros \emph{generically} belong either all to $\left[ \frac{p\pi}{|\bar{\nu}_k(y)| T_k}, \frac{(p+1/2)\pi}{|\bar{\nu}_k(y)| T_k} \right)$ or all to $\left( \frac{(p+1/2)\pi}{|\bar{\nu}_k(y)| T_k}, \frac{(p+1)\pi}{|\bar{\nu}_k(y)| T_k} \right)$.
    \end{enumerate}
\end{lemma}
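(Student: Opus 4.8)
The plan is to write $\omega\mapsto\tilde f_n(\omega,y)$, for the fixed $y\in[1/\widetilde C_2,1/C_0)$, as a product of transfer-type matrices and isolate the single \emph{oscillating} factor. Since $C_0<\widetilde C_2$, there is a unique index $k$ with $C_k=C_0$, and then $\bar\nu_k(y)=-\mathrm{i}|\bar\nu_k(y)|$ is purely imaginary while every other $\bar\nu_j(y)$ (including $j=\infty$) is real and $\geq0$; in particular $k\leq n$. Conjugating the recursion~\eqref{Formula_Pn_Qn} by $\mathrm{diag}(1,\omega^{-1})$ produces matrices $\widetilde M_m$ (with $\det\widetilde M_m=1$) depending on $\omega$ only through $\omega\bar\nu_m T_m$ inside the hyperbolic functions, so that~\eqref{Alt_Def_f_tilde_nPlus1_layers} reads $\tilde f_n(\omega,y)=(\mu_\infty\bar\nu_\infty,1)\,\widetilde M_n\cdots\widetilde M_1\,(1,0)^{\transp}$. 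For $j\neq k$ the matrix $\widetilde M_j$ has nonnegative entries, so a short induction (as for the rank statement of Proposition~\ref{Prop_recursive_formula_Dn}) shows that the ``left'' covector $(\alpha_L,\beta_L):=(\mu_\infty\bar\nu_\infty,1)\widetilde M_n\cdots\widetilde M_{k+1}$ satisfies $\beta_L>0$ and $\alpha_L\geq0$ on $(0,+\infty)$ (with $\alpha_L>0$ except in a degenerate configuration), and likewise the ``right'' vector $(\alpha_R,\beta_R)^{\transp}:=\widetilde M_{k-1}\cdots\widetilde M_1(1,0)^{\transp}$ has $\alpha_R>0$, $\beta_R\geq0$.

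\emph{Case (i): $k=1$.} Here there is no right block, so expanding $\widetilde M_1(1,0)^{\transp}$ gives $\tilde f_n(\omega,y)=\alpha_L(\omega)\cos\psi-\mu_1|\bar\nu_1|\,\beta_L(\omega)\sin\psi$ with $\psi:=\omega|\bar\nu_1|T_1$. Chasing signs on $\psi\in[p\pi,(p+1)\pi)$, one sees that $\tilde f_n(\cdot,y)$ has a constant nonzero sign on $\psi\in((p+\tfrac12)\pi,(p+1)\pi)$ (both terms share that sign), while it changes sign across $\psi\in[p\pi,(p+\tfrac12)\pi)$; hence \emph{at least} one zero, located in the announced sub-interval. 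The substantive point is ``exactly one'': I would show that at every zero $\omega_0$ of $\tilde f_n(\cdot,y)$ the derivative $\partial_\omega\tilde f_n(\omega_0,y)$ has the fixed sign $(-1)^{p+1}$ on $I_p:=[p\pi/(|\bar\nu_1|T_1),(p+1)\pi/(|\bar\nu_1|T_1))$, since two consecutive zeros in $I_p$ would force opposite signs at them. In the nondegenerate case $\alpha_L>0$, zeros lie in the open interval, $s:=\beta_L/\alpha_L$ is well defined there, and $\cot\psi_0=\mu_1|\bar\nu_1|s$ at a zero; a short computation reduces the claim to the estimate $\partial_\omega s(\omega_0)>-\tfrac{T_1}{\mu_1}\bigl(1+\mu_1^2|\bar\nu_1|^2 s(\omega_0)^2\bigr)$.

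To establish that estimate I would pass to the ODE. At a zero the solution $\phi_\omega$ of~\eqref{Problem_equations_new} with $\phi_\omega(0)=1$, $\phi_\omega'(0)=0$ is the Love wave itself, hence coincides up to a scalar on $[H_2,+\infty)$ with the $L^2$ solution $\phi_d$ decaying at $+\infty$, and $s=-\omega/R_d(H_2)$ for the impedance $R_d:=\mu\phi_d'/\phi_d$. Differentiating the ODE of $\phi_d$ in $\lambda=\omega^2$ and forming the corresponding Wronskian yields the classical formula $\partial_\lambda R_d(H_2)=\phi_d(H_2)^{-2}\int_{H_2}^{+\infty}\mu\,(1/C^2-y^2)\,\phi_d^{\,2}$, whose integrand is $\leq0$ on $[H_2,+\infty)$. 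Feeding this into $\partial_\omega s=-R_d(H_2)^{-1}+\omega\,\partial_\omega R_d(H_2)/R_d(H_2)^2$, replacing $\phi_d$ by $\phi_\omega$ on $[H_2,+\infty)$, using the energy identity $\int_0^{+\infty}\mu|\phi_\omega'|^2=\omega^2\int_0^{+\infty}\mu(1/C^2-y^2)|\phi_\omega|^2$ (integration by parts with $\phi_\omega'(0)=0$) together with $\phi_\omega=\cos(\omega|\bar\nu_1|z)$ on $[0,H_2]$ and $\cot\psi_0=\mu_1|\bar\nu_1|s$, the two ``large'' contributions cancel and one is left with $\partial_\omega s(\omega_0)=-\tfrac{T_1}{\mu_1}(1+\mu_1^2|\bar\nu_1|^2 s^2)+\tfrac{2}{\omega^2\mu_1^2|\bar\nu_1|^2\sin^2\psi_0}\int_0^{+\infty}\mu|\phi_\omega'|^2$, the last term being strictly positive; this gives the strict inequality and hence the ``exactly one''. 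The degenerate configuration $\alpha_L\equiv0$ (which forces $y=1/C_\infty$ and $C_2=\cdots=C_n=C_\infty$) is handled by hand: then $\tilde f_n(\omega,y)=-\mu_1|\bar\nu_1|\sin\psi$, whose zeros are exactly $\omega=p\pi/(|\bar\nu_1|T_1)$, one per interval and at its left endpoint.

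\emph{Case (ii): $k\in\llbracket2,n\rrbracket$.} Now the oscillating factor sits in the middle; expanding $(\alpha_L,\beta_L)\widetilde M_k(\alpha_R,\beta_R)^{\transp}$ gives $\tilde f_n(\omega,y)=A(\omega)\cos\chi+B(\omega)\sin\chi$, $\chi:=\omega|\bar\nu_k|T_k$, with $A=\alpha_L\alpha_R+\beta_L\beta_R>0$ but $B=\alpha_L\beta_R/(\mu_k|\bar\nu_k|)-\beta_L\alpha_R\mu_k|\bar\nu_k|$ of \emph{indefinite} sign --- which is precisely why the conclusion is only asymptotic. Writing $\tilde f_n=\sqrt{A^2+B^2}\cos(\chi-\phi)$ with $\phi=\arctan(B/A)\in(-\pi/2,\pi/2)$, zeros are where $\chi-\phi(\omega)\in\tfrac\pi2+\pi\Z$; the rank-one asymptotics of the positive transfer matrices (as in the proof of Proposition~\ref{Lemma_finite_number_nPlus1_layers}) give $\beta_L/\alpha_L\to1/(\mu_{k+1}\bar\nu_{k+1})$ and $\beta_R/\alpha_R\to\mu_{k-1}\bar\nu_{k-1}$ as $\omega\to+\infty$, so $B/(\alpha_L\alpha_R)\to B_\infty:=\mu_{k-1}\bar\nu_{k-1}/(\mu_k|\bar\nu_k|)-\mu_k|\bar\nu_k|/(\mu_{k+1}\bar\nu_{k+1})$ (convention $\mu_{n+1}\bar\nu_{n+1}=\mu_\infty\bar\nu_\infty$). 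When $B_\infty\neq0$ (the ``generic'' case), $\phi$ is eventually of constant sign and $\phi'\to0$, so $\omega\mapsto\chi-\phi(\omega)$ is strictly increasing for $\omega$ large, giving exactly one zero per interval $J_p$ for $p$ large, all in the first half if $B_\infty<0$ and all in the second half if $B_\infty>0$. (If $\beta_R\equiv0$, i.e.\ $C_1=\cdots=C_{k-1}=\widetilde C_2=1/y$, then $B\leq0$ has a definite sign and one recovers the stronger case-(i) conclusion.) The main obstacle throughout is exactly this passage from ``at least one'' to ``exactly one'': in case (i) it requires the precise cancellation above between the $\lambda$-derivative of the Weyl function and the energy identity, and in case (ii) the middle-well geometry destroys that cancellation, leaving only the large-$\omega$/generic statement.
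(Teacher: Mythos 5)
Your proposal follows the same skeleton as the paper's proof: factor the transfer\nobreakdash-matrix product at the unique index $k$ with $C_k=C_0$, use the nonnegativity (and diagonal entries $\geq 1$) of the remaining factors to write $\tilde f_n(\cdot,y)$ as $A(\omega)\cos\chi+B(\omega)\sin\chi$ with $\chi=\omega|\bar\nu_k(y)|T_k$, and conclude by a sign analysis, exact when $k=1$ and only asymptotic and generic when $k\in\llbracket2,n\rrbracket$, where the sign of $B$ is controlled only in the limit via the monotone convergence of the polynomials in $\tanh[\omega\bar\nu_jT_j]$. Your case \emph{(ii)} is essentially the paper's argument repackaged through the phase $\arctan(B/A)$; the one caveat is that your explicit limit $B_\infty=\mu_{k-1}\bar\nu_{k-1}/(\mu_k|\bar\nu_k|)-\mu_k|\bar\nu_k|/(\mu_{k+1}\bar\nu_{k+1})$ is the correct rank-one asymptotics only when $\bar\nu_{k\pm1}(y)>0$; if an adjacent factor is unipotent ($\bar\nu_j(y)=0$) the limiting ratios involve the full reduced products rather than the neighbouring layer alone, which is harmless since only ``generically nonzero'' is used. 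Where you genuinely depart from the paper is the passage from ``at least one'' to ``exactly one'' zero per period in case \emph{(i)}: the paper reads this off directly from the signs of the two coefficients, whereas you note that the sign structure only localizes all zeros in the first half-period and then close the uniqueness by a Sturm--Liouville argument, namely the $\lambda$-derivative of the impedance $R_d=\mu\phi_d'/\phi_d$ of the decaying solution (of fixed sign because $1/C^2-y^2\leq0$ on $[H_2,+\infty)$) combined with the energy identity. I checked your identity $\partial_\omega s(\omega_0)=-\tfrac{T_1}{\mu_1}\bigl(1+\mu_1^2|\bar\nu_1|^2 s^2\bigr)+\tfrac{2}{\omega^2\mu_1^2|\bar\nu_1|^2\sin^2\psi_0}\int_0^{+\infty}\mu|\phi_\omega'|^2$: it is correct (it reduces to $\partial_\omega s=0$ for $n=1$, consistently with Subsection~\ref{Section_asymptotics_N_for_n_equal_1}), and it does force the derivative of $\cot\psi-\mu_1|\bar\nu_1|s$ to be strictly negative at every zero, hence at most one zero per branch. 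This buys a fully justified uniqueness statement at the price of extra ODE machinery (one should also say a word on why the boundary terms at $+\infty$ vanish when differentiating $\phi_d$ in $\lambda$, which is automatic here since $y=1/C_\infty$ only occurs in your fully degenerate sub-case, treated by hand); your identification of the degenerate configurations ($\alpha_L\equiv0$, respectively $\beta_R\equiv0$) matches the paper's.
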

\begin{proof}
    We define $\bar{M}_m$ by $\bar{M}_m(0,y)=I_2$ for $y\in[1/C_\infty, 1/C_0)$ and, for any $\omega>0$, by
    \begin{equation}\label{Def_M_bar}
        \bar{M}_m(\omega,y):=
        \left\{
        \begin{aligned}
            &\begin{pmatrix}
                \cosh[\omega\bar{\nu}_m(y) T_m] & \frac{\sinh[\omega\bar{\nu}_m(y) T_m]}{\mu_m \bar{\nu}_m(y)} \\
                \mu_m \bar{\nu}_m(y) \sinh[\omega\bar{\nu}_m(y) T_m] & \cosh[\omega\bar{\nu}_m(y) T_m]
            \end{pmatrix} &\text{if } y\neq\frac{1}{C_m}\,, \\
            &\begin{pmatrix}
                1 & \frac{T_m}{\mu_m} \\
                0 & 1
            \end{pmatrix} &\text{if } y=\frac{1}{C_m}\,,
        \end{aligned}
        \right.
    \end{equation}
    for which we have on $(\omega,y)\in[0,+\infty)\times[1/C_\infty, 1/C_0)$ the identity
    \[
        \begin{pmatrix} \tilde{P}_m(\omega,y) \\ \tilde{Q}_m(\omega,y) \end{pmatrix} =
        \bar{M}_m(\omega,y)
        \begin{pmatrix} \tilde{P}_{m-1}(\omega,y) \\ \tilde{Q}_{m-1}(\omega,y) \end{pmatrix}, \qquad \forall\, m\in\llbracket1,n\rrbracket\,.
    \]

    For~\emph{(i)}, we have
    \[
        \begin{pmatrix} \tilde{P}_n \\ \tilde{Q}_n \end{pmatrix} =
        \bar{M}_n \cdots   \bar{M}_2
        \begin{pmatrix} \tilde{P}_1 \\ \tilde{Q}_1 \end{pmatrix} =
        \bar{M}_n \cdots   \bar{M}_2
        \begin{pmatrix} \cos[\omega |\bar{\nu}_1| T_1] \\ -\mu_1 |\bar{\nu}_1| \sin[\omega |\bar{\nu}_1| T_1] \end{pmatrix} ,
    \]
    with, since $C_0=:C_1 < \widetilde{C}_2 \leq C_\infty$ and $y\in[1/\widetilde{C}_2, 1/C_0)$, all the $\bar{M}_j$ for $j\in\llbracket2,n\rrbracket$ having nonnegative coefficients with positive diagonal coefficients (bounded below by $1$) on $[0,+\infty)\times[1/\widetilde{C}_2, 1/C_0)$, hence $\bar{M}_n \cdots \bar{M}_2$ too and we denote $m_{11},m_{22}\geq1$ and $m_{12},m_{21}\geq0$ the coefficients of $\bar{M}_n \cdots \bar{M}_2$.
    Moreover, $\nu_\infty(y)\geq0$.
    Thus,
    \[
        \tilde{f}_n = (\mu_\infty \nu_\infty m_{11} + m_{21}) \cos[\omega |\bar{\nu}_1| T_1] - (\mu_\infty \nu_\infty m_{12} + m_{22}) \mu_1 |\bar{\nu}_1| \sin[\omega |\bar{\nu}_1| T_1] ,
    \]
    where the coefficients of $\cos[\omega |\bar{\nu}_1| T_1]$ and of $\sin[\omega |\bar{\nu}_1| T_1]$ are respectively nonnegative and negative on $[0,+\infty)\times[1/\widetilde{C}_2, 1/C_0)$. Hence, $\tilde{f}_n(\cdot, y)$ has exactly one zero
    \[
        \omega_n(y) \in \left[ \frac{n\pi}{|\bar{\nu}_1(y)| T_1}, \frac{\left(n+\frac{1}{2}\right)\pi}{|\bar{\nu}_1(y)| T_1} \right) 
    \]
    in every $\left[ n\pi/(|\bar{\nu}_1(y)| T_1), (n+1)\pi/(|\bar{\nu}_1(y)| T_1) \right)$.
    
    In the case~\emph{(ii)}, we write $\tilde{f}_n$ as the scalar product
    \[
        \tilde{f}_n= \begin{pmatrix} \tilde{P}_n \\ \tilde{Q}_n \end{pmatrix} \cdot \begin{pmatrix} \mu_\infty \bar{\nu}_\infty \\ 1 \end{pmatrix}.
    \]
    With $k$ the unique integer in $\llbracket 2, n \rrbracket$ such that $C_k=C_0<\widetilde{C}_2 \leq C_\infty$, we have
    \[
        \tilde{f}_n = \bar{M}_k  \bar{M}_{k-1}  \cdots   \bar{M}_1 \begin{pmatrix} 1 \\ 0 \end{pmatrix} \cdot (\bar{M}_n)\transp \cdots  (\bar{M}_{k+1})\transp \begin{pmatrix} \mu_\infty \bar{\nu}_\infty \\ 1 \end{pmatrix}
    \]
    and we define the coefficients $m_{ij}$ and $\tilde{m}_{ij}$ through
    \[
        \bar{M}_{k-1}  \cdots \bar{M}_1 =
        \left( \prod\limits_{j=1}^{k-1} \cosh[\nu_j T_j] \right)
        \begin{pmatrix}
            m_{11} & m_{12} \\
            m_{21} & m_{22}
        \end{pmatrix}
    \]
    and
    \[
        (\bar{M}_n)\transp \cdots (\bar{M}_{k+1})\transp =
        \left( \prod\limits_{j=k+1}^n \cosh[\nu_j T_j] \right)
        \begin{pmatrix}
            \tilde{m}_{11} & \tilde{m}_{12} \\
            \tilde{m}_{21} & \tilde{m}_{22}
        \end{pmatrix}.
    \]
    A straightforward induction gives that the $m_{ij}$'s and the $\tilde{m}_{ij}$'s are multivariate polynomials in the variables $\tanh[\omega \bar{\nu}_j T_j]$, $j\neq k$, of order $k-1$ and $n-k$ (the numbers of matrices multiplied), and with $\bar{\nu}_j\geq0$ for $j\in\llbracket1,n+1\rrbracket\setminus\{k\}$.
    A first key remark, that can be established by induction, is that the coefficients of the polynomials are independent of $\omega$ and nonnegative.
    We therefore define, for clarity and shortness in the forthcoming computations, $\alpha$, $\beta$, $\gamma$, and $\delta$ as the multivariate polynomials in the variables $\tanh[\omega \bar{\nu}_j T_j]$, with coefficients independent of $\omega$, through
    \[
        \begin{pmatrix}\alpha \\ \beta \end{pmatrix} := \left( \prod\limits_{j=1}^{k-1} \cosh[\nu_j T_j] \right)^{-1}
        \bar{M}_{k-1} \cdots  \bar{M}_1 \begin{pmatrix} 1 \\ 0 \end{pmatrix} =
        \begin{pmatrix}
            m_{11} \\
            m_{21}
        \end{pmatrix}
    \]
    and
    \[
        \begin{pmatrix} \gamma \\ \delta \end{pmatrix} := \left( \prod\limits_{j=k+1}^n \cosh[\nu_j T_j] \right)^{-1}
        (\bar{M}_n)\transp   \cdots  (\bar{M}_{k+1})\transp \begin{pmatrix} \mu_\infty \bar{\nu}_\infty \\ 1 \end{pmatrix} = \begin{pmatrix} \mu_\infty \bar{\nu}_\infty \tilde{m}_{11} + \tilde{m}_{12} \\ \mu_\infty \bar{\nu}_\infty \tilde{m}_{21} + \tilde{m}_{22} \end{pmatrix}.
    \]
    Consequently,
    \begin{align*}
        \tilde{f}_n = 0 \quad
        \Leftrightarrow \quad & \frac{ \tilde{f}_n }{ \prod\limits_{j\neq k}\cosh[\nu_j T_j] } = 0 \\
        \Leftrightarrow \quad &\begin{pmatrix}
            \cos[\omega |\bar{\nu}_k| T_k] & \sin[\omega |\bar{\nu}_k| T_k] / (\mu_k |\bar{\nu}_k|) \\
            -\mu_k |\bar{\nu}_k| \sin[\omega |\bar{\nu}_k| T_k] & \cos[\omega |\bar{\nu}_k| T_k]
        \end{pmatrix}
        \begin{pmatrix}\alpha \\ \beta \end{pmatrix}
        \cdot \begin{pmatrix} \gamma \\ \delta \end{pmatrix} = 0 \\
        \Leftrightarrow \quad &(\alpha \gamma + \beta \delta) \cos[\omega |\bar{\nu}_k| T_k] + \left( \beta \gamma - \alpha \delta \mu_k^2 |\bar{\nu}_k|^2 \right) \frac{\sin[\omega |\bar{\nu}_k| T_k]}{\mu_k |\bar{\nu}_k|} = 0\,.
    \end{align*}
    Now, a second key remark, that can also be established by induction, is that the polynomials $\tilde{m}_{ii}$ and $\tilde{m}_{ii}$ on the diagonal are greater or equal to~$1$, and the ones on the anti-diagonal are nonnegative with $m_{21}$ (respectively $\tilde{m}_{12}$) having at least one positive coefficient if $y>1/C_j$ for one of the $j$'s in $\llbracket 1, k-1 \rrbracket$ (resp.\ in $\llbracket k+1, n \rrbracket$). This means on one hand that $\alpha=m_{11}\geq1$ and $\delta \geq \tilde{m}_{22} \geq1$, and another hand that either at least one of $\beta=m_{21}$ and $\gamma \geq \tilde{m}_{12}$ has a positive coefficient, or $y=1/\widetilde{C}_2=1/C_j$ for all $j \in \llbracket 1, n+1 \rrbracket \setminus\{k\}$.

    In the latter case, we actually have
    \[
        \tilde{f}_n=0 \quad
        \Leftrightarrow \quad - \mu_k |\bar{\nu}_k| \sin[\omega |\bar{\nu}_k| T_k] = 0\,,
    \]
    and the zeros are exactly the $p\pi / (|\bar{\nu}_k(y)| T_k)$, $p\geq0$.

    In the former case, since $\alpha$, $\beta$, $\gamma$, and $\delta$ are multivariate polynomials in the variables $\tanh[\omega \bar{\nu}_j T_j]$ with nonnegative coefficient, they are nondecreasing in $\omega$ and converge respectively to constants $\bar\alpha$, $\bar\beta$, $\bar\gamma$,
    and $\bar\delta$ when $\omega\to+\infty$ with $\bar\alpha,\bar\delta\geq1$ and, due to the key properties aforementioned about a positive coefficient, $\bar\beta,\bar\gamma > 0$ since $\beta = m_{12}>0$ and $\gamma\geq \tilde{m}_{12}$ are nondecrasing. Therefore, $\cos[\omega |\bar{\nu}_k| T_k]$ has a positive prefactor and the one of $\sin[\omega |\bar{\nu}_k| T_k]$ converges to $\bar\gamma \bar\beta/(\mu_k |\bar{\nu}_k|) - \bar\delta \bar\alpha \mu_k |\bar{\nu}_k|$. This limit is \emph{generically} nonzero, thence the coefficient of $\sin[\omega |\bar{\nu}_k| T_k]$ has a sign at least for $\omega$ large enough.
    \begin{remark*}
        Note that in the non-generic case $\bar\gamma(y) \bar\beta(y) = \mu_k^2 |\bar{\nu}_k(y)|^2 \bar\delta(y) \bar\alpha(y)$, the sign of the coefficient of $\sin[\omega |\bar{\nu}_k| T_k]$ is very likely constant for $\omega$ large enough (because the $\omega\to\tanh[\omega \bar{\nu}_j T_j]$ are increasing (and concave) hence what probably matters is the coefficient in front of $\tanh[\omega \bar{\nu}_j T_j]$ for the largest $\bar{\nu}_j T_j$. However, we are not able to prove it.
    \end{remark*}
    On the one hand, if $y$ is s.t.\ this limit is negative ---$\bar\gamma(y) \bar\beta(y) < \mu_k^2 |\bar{\nu}_k(y)|^2 \bar\delta(y) \bar\alpha(y)$---, then for $\omega$ large enough $\omega\mapsto\tilde{f}_n(\omega, y)$ has in every
    \[
        \left[ p\pi/(|\bar{\nu}_k(y)| T_k), (p+1)\pi/(|\bar{\nu}_k(y)| T_k) \right)
    \]
    exactly one zero $\omega_p\in \left( p\pi/(|\bar{\nu}_k(y)| T_k), (p+1/2)\pi/(|\bar{\nu}_k(y)| T_k) \right)$. On the other hand, if this limit is positive, then for $\omega$ large enough $\omega\mapsto\tilde{f}_n(\omega, y)$ has in every
    \[
        \left[ p\pi/(|\bar{\nu}_k(y)| T_k), (p+1)\pi/(|\bar{\nu}_k(y)| T_k) \right)
    \]
    exactly one zero $\omega_p\in \left( (p+1/2)\pi/(|\bar{\nu}_k(y)| T_k), (p+1)\pi/(|\bar{\nu}_k(y)| T_k) \right)$.

    This concludes the proof of Lemma~\ref{Lemma_zeros_ftilde_any_y_nPlus1_layers}.
\end{proof}

\section{The simple square well: direct computations}\label{Section_1plus1_layers}
In this section, we specify to the simple square well ($n=1$), i.e., the ``$1+1$~layers'' case:
\[
    (\mu(z), \rho(z)) =
    \left\{
        \begin{aligned}
            &(\mu_1, \rho_1)\,, \quad &&\textrm{if }\!\! \hphantom{H} 0 \leq z < H\,, \\
            &(\mu_2, \rho_2)\,, \quad &&\textrm{if }\!\! \hphantom{0} H \leq z < +\infty\,,
        \end{aligned}
    \right.
\]
where $\mu_j,\rho_j>0$, $j=1,2$, with $C_\infty = C_2 = \sqrt{\mu_2/\rho_2} > \sqrt{\mu_1/\rho_1} = C_1 = C_0$.

Our goal is to retrieve the values of $H, C_1, C_2$, and $\rho_2$ from the knowledge of the frequency--wavenumber couples of the Love waves and from the knowledge of $\rho_1$.

Applying Proposition~\ref{Prop_recursive_formula_Dn} to $n=1$ (or by direct computations), $f_1$ defined in~\eqref{Def_f_n} reads
\begin{equation}\label{Def_f_1}
    f_1(\omega, k) = \mu_2 \nu_2(\omega, k) \cosh[\nu_1(\omega, k) H ] + \mu_1 \nu_1(\omega, k) \sinh[ \nu_1(\omega, k) H ]\,.
\end{equation}

By Lemma~\ref{Alt_Def_kell_nPlus1_layers}, $\nu_1=\nu_0$ and $\nu_2=\nu_\infty$ defined in~\eqref{Def_nu} satisfy $\nu_1(\omega, k)\in i\R_-\setminus\{0\}$ and~$\nu_2(\omega, k)\geq0$ if~$(\omega, k)$ is a zero of $f_1$. Hence, for such $(\omega,k)$ ---thus $\omega>0$---, we have
\begin{equation}\label{Def_f_1_at_its_zeros}
    f_1(\omega, k) = \omega \left( \mu_2 \bar\nu_2(k/\omega) \cos[\omega |\bar\nu_1(k/\omega)| H ] - \mu_1 |\bar\nu_1(k/\omega)| \sin[ \omega |\bar\nu_1(k/\omega)| H ] \right).
\end{equation}

\subsection{Study of the wavenumbers}
We can give another form to the characterization of the $k_\ell$'s in Definition~\ref{Def_kell_nPlus1_layers}
(and Lemma~\ref{Alt_Def_kell_nPlus1_layers}): for $\omega$ fixed, the $k_\ell$'s are the $k$'s solution to
\[
    \tan \left[ H \omega \sqrt{ C_1^{-2} - k^2 \omega^{-2}} \right] = \frac{\mu_2}{\mu_1} \sqrt{ \frac{ k^2 \omega^{-2} - C_2^{-2} }{ C_1^{-2} - k^2 \omega^{-2} } } \quad \text{ and } \quad \frac{\omega}{C_2} \leq k < \frac{\omega}{C_1},
\]
where the l.h.s.\ of the equation makes sense since $f_1(\omega, k)=0$ implies $\cos[ |\nu_1(\omega, k)| H ]\neq0$.

Note that the above dispersion relation can be found for instance in~\cite[(9.10)]{Haskell-53}.

On one hand the function $\psi_1(x) := \sqrt{\frac{x - C_2^{-2}}{C_1^{-2} - x}}$ is continuous strictly increasing from $0$ to $+\infty$ on $[C_2^{-2}, C_1^{-2})$.
On another hand the function $\psi_2(x) := \tan \left[ H \omega \sqrt{C_1^{-2} - x} \right]$ satisfies (for $H\omega>0$) the following properties, where $n := \left\lfloor\frac{\omega H}{\pi} \frac{\sqrt{C_2^2-C_1^2}}{C_1 C_2} - \frac{3}{2} \right\rfloor + 1 \geq 1$.

If $C_2^{-2} < C_1^{-2} - \left(\frac{3}{2H} \frac{\pi}{\omega}\right)^2 \Leftrightarrow \omega > \frac32 \frac{\pi}{H} \frac{C_1 C_2}{\sqrt{C_2^2 - C_1^2}}$, then it is continuous and strictly decreasing
\begin{itemize}[leftmargin=5mm]
    \item from $\tan \left[ \omega H \frac{\sqrt{C_2^2 - C_1^2}}{C_1 C_2} \right]$ to $-\infty$ on $\left[ C_2^{-2}, C_1^{-2} - \left(\frac{2n+1}{2H\omega} \pi \right)^2 \right)$,
    \item from $+\infty$ to $-\infty$ on the intervals $\left( C_1^{-2} - \left(\frac{2\ell-1}{2H\omega} \pi \right)^2, C_1^{-2} - \left(\frac{2\ell-3}{2H\omega} \pi \right)^2 \right)$, $\ell\in\llbracket 2, n+1 \rrbracket$,
    \item from $+\infty$ to $0$ on $\left( C_1^{-2} - \left(\frac{1}{2H} \frac{\pi}{\omega}\right)^2, C_1^{-2} \right]$.
\end{itemize}
\begin{figure}[H]
    \centering
    \includegraphics[scale=0.7]{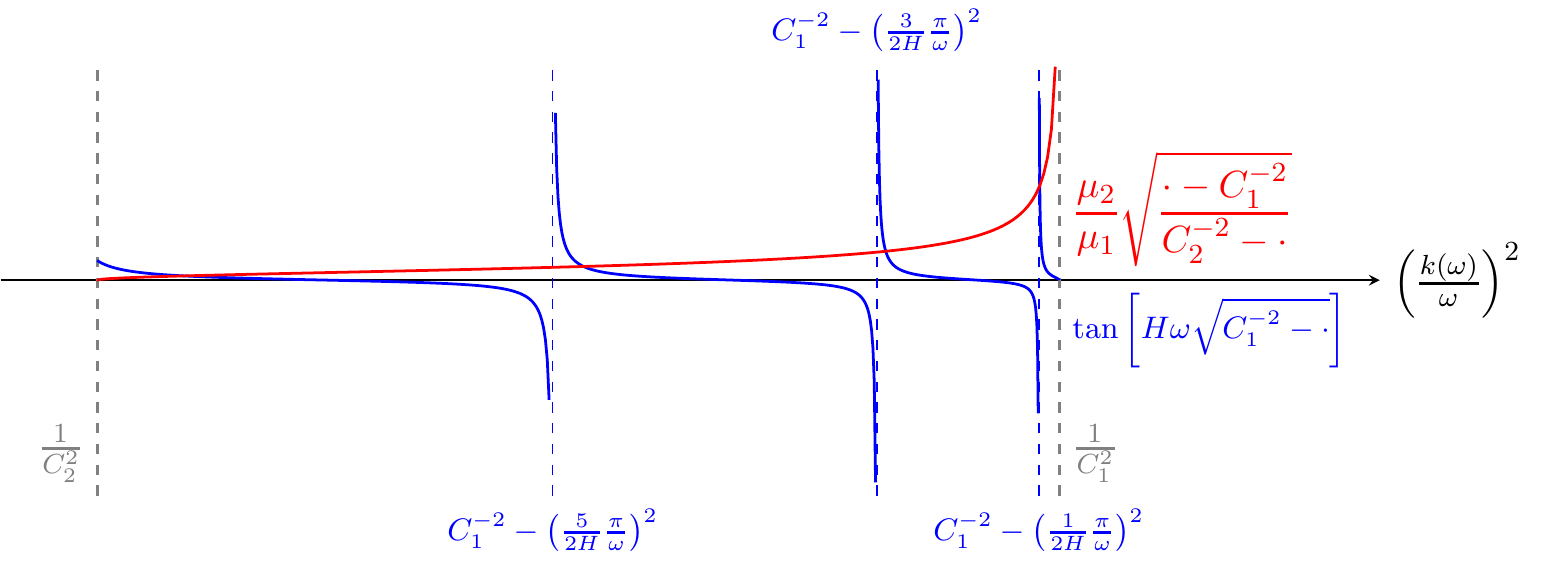}
    \caption{Functions $\psi_1$ (red) and $\psi_2$ (blue) when $C_2^{-2} < C_1^{-2} - \left(\frac{3}{2H} \frac{\pi}{\omega}\right)^2$.}
    \label{fig:12.5}
\end{figure}
\begin{figure}[H]
    \centering
    \includegraphics[scale=0.7]{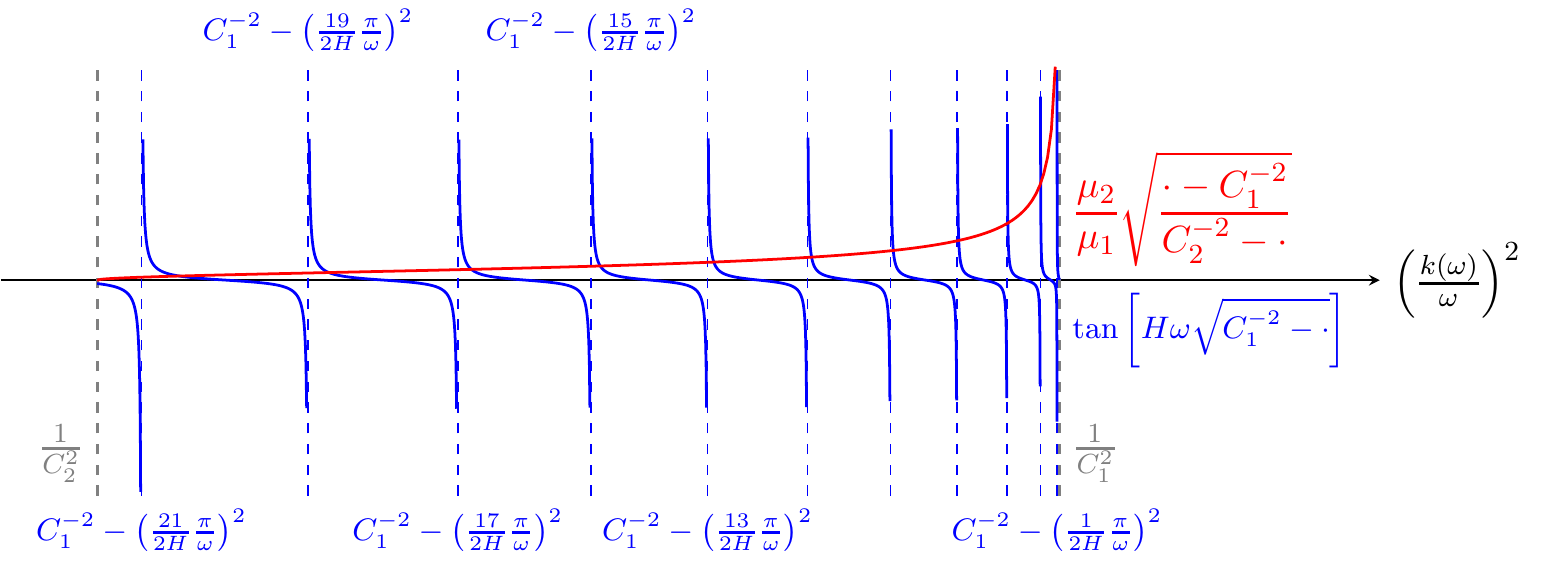}
    \caption{Functions $\psi_1$ (red) and $\psi_2$ (blue) when  $C_2^{-2} < C_1^{-2} - \left(\frac{3}{2H} \frac{\pi}{\omega}\right)^2$, but for an $\omega$ larger than in Figure~\ref{fig:12.5}.}
    \label{fig:39}
\end{figure}

If $C_1^{-2} - \left(\frac{3}{2H} \frac{\pi}{\omega}\right)^2 < C_2^{-2} < C_1^{-2} - \left(\frac{1}{2H} \frac{\pi}{\omega}\right)^2$, then it is continuous and strictly decreasing
\begin{itemize}[label=$\star$, leftmargin=2mm]
    \item from $\tan \left[ \omega H \frac{\sqrt{C_2^2 - C_1^2}}{C_1 C_2} \right]$ to $-\infty$ on $\left[ C_2^{-2}, C_1^{-2} - \left(\frac{1}{2H} \frac{\pi}{\omega}\right)^2 \right)$,
    \item from $+\infty$ to $0$ on $\left( C_1^{-2} - \left(\frac{1}{2H} \frac{\pi}{\omega}\right)^2, C_1^{-2} \right]$.
\end{itemize}
\begin{figure}[H]
    \centering
    \includegraphics[scale=0.7]{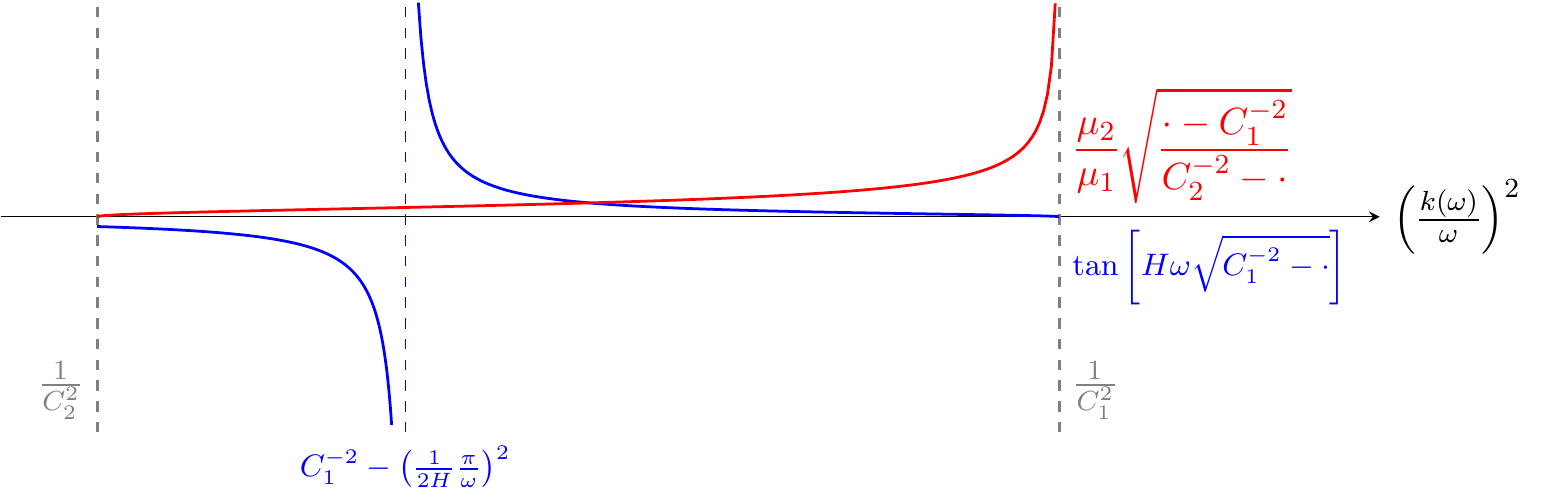}
    \caption{Functions $\psi_1$ (red) and $\psi_2$ (blue) when $C_1^{-2} - \left(\frac{3}{2H} \frac{\pi}{\omega}\right)^2 < C_2^{-2} < C_1^{-2} - \left(\frac{1}{2H} \frac{\pi}{\omega}\right)^2$.}
    \label{fig:2.2}
\end{figure}
        
If $C_1^{-2} - \left(\frac{1}{2H} \frac{\pi}{\omega}\right)^2 < C_2^{-2} \Leftrightarrow \omega < \frac{\pi}{2H} \frac{C_1 C_2}{\sqrt{C_2^2 - C_1^2}}$, then it is continuous ans strictly decreasing from $\tan \left[ \omega H \frac{\sqrt{C_2^2 - C_1^2}}{C_1 C_2} \right]>0$ to $0$ on its domain $\left[ C_2^{-2}, C_1^{-2} \right]$.
\begin{figure}[H]
    \centering
    \includegraphics[scale=0.7]{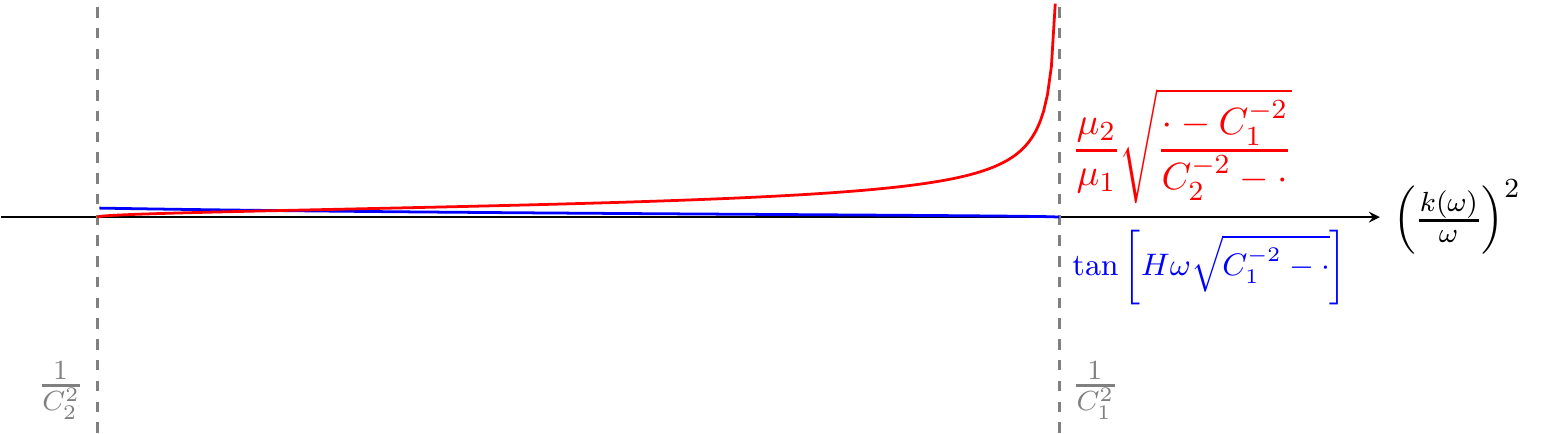}
    \caption{Functions $\psi_1$ (red) and $\psi_2$ (blue) when $C_1^{-2} - \left(\frac{1}{2H} \frac{\pi}{\omega}\right)^2 < C_2^{-2}$ ($\omega$ small).}
    \label{fig:1.4}
\end{figure}

The $k_\ell$'s are therefore implicitly defined as the points $x$'s at which the two functions intersect. For instance, in Figures~\ref{fig:1.4} and~\ref{fig:2.2} ---that is, $\omega$ small enough---, there is only one branch: $k_1$; in Figure~\ref{fig:12.5} ---that is, for a larger~$\omega$---, there are four branches: $k_1, k_2, k_3$, and $k_4$; and in Figure~\ref{fig:39} ---that is, for an even larger~$\omega$---, there are eleven branches (the intersection of the curves corresponding to $k_1$, on the right of the figure, is not visible).

\subsection{Regularity of the branches of wavenumbers}
The goal of this subsection is to prove the following.
\begin{proposition}\label{Proposition_1plus1layer}
    Let $n=1$. For any integer $\ell\geq1$, $\omega\mapsto k_\ell(\omega)/\omega$ is smooth, bijective, increasing from $(\omega_\ell,+\infty)$ to $(1/C_2, 1/C_1)$, where
    \[
        \omega_\ell := (\ell-1)\frac{C_1 C_2}{\sqrt{C_2^2 - C_1^2}} \frac{\pi}{H}\,.
    \]
\end{proposition}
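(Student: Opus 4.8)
The plan is to deduce the structural part of the statement from Theorem~\ref{Thm_monotonicity_nPlus1_layers} (which applies here without extra hypotheses, since $n=1<3$): it already gives that $\omega\mapsto k_\ell(\omega)/\omega$ is analytic ---hence $\mathcal{C}^\infty$---, bijective, and increasing from $(\omega_\ell,+\infty)$ onto $(1/C_\infty,1/C_0)=(1/C_2,1/C_1)$, with $k_\ell(\omega_\ell)=\omega_\ell/C_2$ when $\omega_\ell>0$. So the whole content to be supplied is the \emph{value} of $\omega_\ell$. (If one instead wants a self-contained argument, the structural part is reproved directly: the function $G(\omega,y):=\mu_2\bar\nu_2(y)\cos[\omega H|\bar\nu_1(y)|]-\mu_1|\bar\nu_1(y)|\sin[\omega H|\bar\nu_1(y)|]$, which by \eqref{Def_f_1_at_its_zeros} satisfies $f_1(\omega,\omega y)=\omega G(\omega,y)$ on the whole strip and so vanishes exactly on the branches, is real-analytic on $(0,+\infty)\times(1/C_2,1/C_1)$; at any zero $\cos[\omega H|\bar\nu_1(y)|]\neq0$ ---otherwise $\sin$ would vanish too, impossible since $\mu_1|\bar\nu_1(y)|>0$--- and a short computation gives $\partial_\omega G=-H\mu_1|\bar\nu_1(y)|^2/\cos[\omega H|\bar\nu_1(y)|]\neq0$, so the implicit function theorem turns each branch into a smooth graph; the monotonicity of $\omega\mapsto k_\ell(\omega)/\omega$ is then exactly the computation in Step~1 of the proof of Theorem~\ref{Thm_monotonicity_nPlus1_layers}.)

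Set $a:=\sqrt{C_1^{-2}-C_2^{-2}}=\sqrt{C_2^2-C_1^2}/(C_1C_2)>0$, so that the target formula reads $\omega_\ell=(\ell-1)\pi/(aH)$. First I would locate the candidate values of $\omega_\ell$: evaluating \eqref{Def_f_1} at $k=\omega/C_2$ (where $\nu_2=0$ and $\nu_1=-\ii\,\omega a$) gives
\[
    f_1(\omega,\omega/C_2)=\mu_1\nu_1\sinh[\nu_1H]=-\mu_1\,\omega a\,\sin[\omega aH],
\]
so for $\omega>0$ one has $f_1(\omega,\omega/C_2)=0$ if and only if $\omega\in\{p\pi/(aH):p\in\N\setminus\{0\}\}$. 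By Theorem~\ref{Thm_monotonicity_nPlus1_layers}, $\omega_\ell>0$ forces $f_1(\omega_\ell,\omega_\ell/C_2)=0$, hence $\omega_\ell\in\{0\}\cup\{p\pi/(aH):p\geq1\}$; what remains is to check that the $\ell$-th branch is the one issued from $(\ell-1)\pi/(aH)$.

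For that I would use a counting argument. Let $N(\omega):=\#\{y\in(1/C_2,1/C_1):f_1(\omega,\omega y)=0\}$; by the characterisation of Love waves and Theorem~\ref{Thm_monotonicity_nPlus1_layers} this equals $\#\{\ell\geq1:\omega_\ell<\omega\}$. On the other hand $N(\omega)$ is exactly the number of intersection points of the two graphs analysed just before the statement ($\psi_2=\tan[H\omega\sqrt{C_1^{-2}-\cdot}]$ against $\tfrac{\mu_2}{\mu_1}\psi_1$): running through the three regimes of $\omega$ listed there and, within each, through every sub-interval of continuity of $\psi_2$ (on which $\psi_2$ is strictly decreasing while $\tfrac{\mu_2}{\mu_1}\psi_1$ is strictly increasing, so at most one intersection per sub-interval, realised according to the endpoint values), one gets $N(\omega)=p+1$ whenever $\omega aH\in(p\pi,(p+1)\pi)$, $p\geq0$, and $N(\omega)=q$ when $\omega aH=q\pi$, $q\geq1$. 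Thus $N$ is integer-valued, nondecreasing, tends to $1$ as $\omega\searrow0$, and jumps by exactly $1$ at each point of $\{q\pi/(aH):q\geq1\}$ and nowhere else. Since $N(\omega)=\#\{\ell:\omega_\ell<\omega\}$, this forces the sequence $(\omega_\ell)_{\ell\geq1}$ to be the increasing enumeration of $\{0\}\cup\{q\pi/(aH):q\geq1\}$, i.e.\ $\omega_\ell=(\ell-1)\pi/(aH)=(\ell-1)\frac{C_1C_2}{\sqrt{C_2^2-C_1^2}}\frac{\pi}{H}$, as claimed.

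The step I expect to be the real work is the last one: the bookkeeping for $N(\omega)$. One must go case by case through the three regimes (and the degenerate situations where $\omega aH$ is a multiple of $\pi/2$), and in the first sub-interval of $\psi_2$ carefully separate interior intersections from the one that may sit at $y=1/C_2$ ---the latter corresponding to ``a branch starting'' rather than to an honest Love wave, and being precisely the mechanism by which $N$ jumps as $\omega$ crosses $q\pi/(aH)$. A variant that avoids the global count is to study each branch locally near the bottom of the strip: writing $y=1/C_2+s$ with $s\searrow0$, the equation $f_1(\omega,\omega y)=0$ reduces, near a point $(q\pi/(aH),1/C_2)$, to $s=c\,(\omega-q\pi/(aH))^2+o((\omega-q\pi/(aH))^2)$ with $c>0$, so exactly one branch emanates from each such point (and, by the same computation with $q=0$, exactly one from $(0,1/C_2)$); since $(\omega,1/C_1)$ is never a zero of $f_1$, since branches do not cross (Proposition~\ref{Prop_regularity_nPlus1_layers}), and since $\omega\mapsto k_\ell(\omega)/\omega$ is increasing, ordering the branches by their starting point again yields $\omega_\ell=(\ell-1)\pi/(aH)$.
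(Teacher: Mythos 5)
Your proposal is correct, and its core is the same as the paper's: the value of $\omega_\ell$ is extracted exactly as in the paper, by evaluating $f_1$ at $k=\omega/C_2$ (where $\nu_2=0$) to get $-\mu_1\omega a\sin[\omega aH]$, whose nonnegative zeros are the candidates $(\ell-1)\pi/(aH)$. The differences are organizational. First, your primary route delegates smoothness, bijectivity and monotonicity to Theorem~\ref{Thm_monotonicity_nPlus1_layers}; this is legitimate and non-circular (the $n=1$ case of Proposition~\ref{Prop_infinitely_many_zeros_close_to_1overC0}, on which that theorem rests, is proved in Subsection~\ref{Section_asymptotics_N_for_n_equal_1} independently of the present proposition), but the paper deliberately avoids it and reproves these facts ``by hand'' via the implicit function theorem applied to $g(\omega,Y)=\mu_1 Y\sin[H\omega Y]-\mu_2\sqrt{a^2-Y^2}\cos[H\omega Y]$ --- your parenthetical IFT variant is essentially that argument, with the same key observation that $\sin$ and $\cos$ share a sign at any zero. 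Second, your counting argument for $N(\omega)$ is a more explicit justification of the indexing than the paper gives: the paper infers from the $\psi_1$/$\psi_2$ intersection analysis and strict monotonicity that the $\omega_\ell$'s are the increasingly ordered nonnegative zeros of $\omega\mapsto f_1(\omega,\omega/C_2)$, whereas you verify directly that $N$ jumps by exactly one at each $q\pi/(aH)$ and nowhere else, which pins down both the distinctness of the $\omega_\ell$'s and the matching of the $\ell$-th branch with the $(\ell-1)$-th zero. (One small point you gloss over, as does the paper: concluding $\omega_\ell=(\ell-1)\pi/(aH)$ from the jump count also uses that $\ell\mapsto\omega_\ell$ is nondecreasing, which follows from the non-crossing of branches together with the fact that each branch increases from $1/C_2$ and the $k_\ell(\omega)$'s are decreasingly ordered at fixed $\omega$.)
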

The top-left simulation in Figure~\ref{Fig_simu_1_6Plus1_layers_intro} illustrates the bijectivity of $\omega\mapsto k_\ell(\omega)/\omega$.

We already know, by Proposition~\ref{Prop_regularity_nPlus1_layers}, that the function is even analytic. However, we give the proof of smoothness because it can be obtained ``by hand'' thanks to implicit function theorem (IFT), without analytic perturbation theory, and because we obtain the explicit formulae of the $\omega_\ell$'s.
\begin{proof}
    By the properties described above of the two functions $\psi_1$ and $\psi_2$ introduced earlier, and the definition of the $k_\ell$'s, we deduce all claimed results except for the values of the $\omega_\ell$'s, which is established at the end of this proof, and for the strict monoticity and the smoothness, which are now obtained by the IFT.
    
    Let us define $g:\R\times(0, \sqrt{1/C_1^2-1/C_2^2})\to\R$ by
    \[
        g(\omega,Y) := \mu_1 Y \sin[ H \omega Y ] - \mu_2 \sqrt{\frac{C_2^2-C_1^2}{C_1^2 C_2^2} - Y^2} \cos[ H \omega Y ]\,,
    \]
    which is continuous differentiable, as well as
    \[
        Y_\ell \equiv Y_\ell(\omega) := \sqrt{\frac{1}{C_1^2} - \left(\frac{k_\ell(\omega)}{\omega}\right)^2} \in \left(0, \sqrt{\frac{1}{C_1^2} - \frac{1}{C_2^2}} \right).
    \]
    \begin{remark*}
        This $g$ is nothing else than $-\tilde{f}_1$ in Definition~\ref{Def_f_tilde_P_tilde_Q_tilde_nPlus1_layers}, up to the domain and after the change of variable $Y=|\nu_1(y)|$.
    \end{remark*}
    For any $\omega_\star>\omega_\ell$, $k_\ell(\omega_\star)$ exists and we have $g(\omega_\star,Y_\ell(\omega_\star))=0$ by definition of the $k_\ell$'s.
    Moreover, defining for shortness
    \[
        s(Y):=\sqrt{\frac{C_2^2-C_1^2}{C_1^2 C_2^2} - Y^2} \in \left(0, \frac{\sqrt{C_2^2-C_1^2}}{C_1C_2}\right),
    \]
    we have
    \[
        \frac{1}{\mu_1}\frac{\di g}{\di Y}(\omega,Y) = \left[ 1 + \frac{\mu_2}{\mu_1} H \omega s(Y) \right] \sin[ H \omega Y ] + H \omega Y \left[ 1 + \frac{\mu_2}{\mu_1} \frac{1}{H \omega s(Y)} \right] \cos[ H \omega Y ]\,,
    \]
    thence $\frac{\di g}{\di Y}(\omega_\star,Y_\ell(\omega_\star))\neq0$ since $\sin[ H \omega_\star Y_\ell(\omega_\star) ]$ and $\cos[ H \omega_\star Y_\ell(\omega_\star) ]$ must have the same sign (and be non-zero) by~\eqref{Def_f_1_at_its_zeros}. Therefore, by the IFT there exists a neighborhood~$U$ of~$\{\omega_\star\}$ s.t.\ there exists a unique $\varphi\in C^1(U,\R)$ with $\varphi(\omega_\star)=Y_\ell(\omega_\star)$ and $g(\omega,\varphi(\omega))=0$ on~$U$. This $\varphi$ is (on~$U$) exactly $Y_\ell$ by definition of~$k_\ell$. Moreover, omitting for shortness the dependency of $Y_\ell$ in $\omega$, we have
    \begin{align*}
        Y_\ell'(\omega) = \varphi'(\omega) &= - \left(\frac{\di g}{\di Y_\ell}(\omega,\varphi(\omega))\right)^{-1} \frac{\di g}{\di \omega}(\omega,\varphi(\omega)) \\
        &= - H Y_\ell \frac{Y_\ell \cos[ H \omega Y_\ell ] + \frac{\mu_2}{\mu_1} s(Y_\ell) \sin[ H \omega Y_\ell ]}{\left[ 1 + \frac{\mu_2}{\mu_1} H \omega s(Y_\ell) \right] \sin[ H \omega Y_\ell ] + H \omega Y_\ell \left[ 1 + \frac{\mu_2}{\mu_1} \frac{1}{H \omega s(Y_\ell)} \right] \cos[ H \omega Y_\ell ]}
    \end{align*}
    on $U$, hence $Y_\ell'(\omega_\star) < 0$, for the same reason that the trigonometric functions share the same sign for any $(\omega,Y_\ell(\omega))$. Finally, given the definition of $Y_\ell$, we have
    \[
        \frac{k_\ell(\omega_\star)}{\omega} \partial_\omega \left(\frac{k_\ell(\omega_\star)}{\omega_\star}\right) = - Y_\ell(\omega_\star)Y_\ell'(\omega_\star) > 0\,.
    \]
    Since $k_\ell>0$ and since the above reasoning is true for any $\omega_\star>\omega_\ell$, it proves that $\omega\mapsto \left\{k_\ell(\omega)/\omega\right\}_\ell$ is a strictly increasing function (where it is defined).
    
    Moreover, since the denominator of $Y_\ell'$ does not vanish, by bootstrapping we obtain that $Y_\ell$ is smooth. Hence, from the definition of $Y_\ell$, we deduce that $\omega\mapsto \left\{k_\ell(\omega)/\omega\right\}_\ell$ is also smooth as claimed.
    
    Finally, due to the strict monotonicity, the $\omega_\ell$'s are necessarily the $\omega$'s such that $(\omega, \omega/C_2)$ is a zero of $f_1$. Since $\nu_2(\omega, \omega/C_2)=0$, the formula~\eqref{Def_f_1_at_its_zeros} of $f_1$ gives that the $\omega_\ell$'s are the (increasingly ordered) nonnegative solutions to $\omega |\bar\nu_1(1/C_2)| \sin[ \omega |\bar\nu_1(1/C_2)| H ] = 0$. That is,
    \[
        \omega_\ell = (\ell-1)\frac{C_1 C_2}{\sqrt{C_2^2 - C_1^2}} \frac{\pi}{H}\,. \qedhere
    \]
\end{proof}

\subsection{Recovering the parameters of the medium}
The properties of the two functions $\psi_1$ and $\psi_2$ introduced earlier imply, when $k_\ell(\omega)$ exists, that $k_\ell(\omega)^2/\omega^2$ belongs to
\[
    \left( C_1^{-2} - \left(\frac{2\ell-1}{2H\omega} \pi \right)^2, C_1^{-2} - \left(\frac{2\ell-3}{2H\omega} \pi \right)^2 \right) \cap \left[C_2^{-2},C_1^{-2}\right),
\]
for $\ell\geq2$, and that $k_1(\omega)^2/\omega^2 \in \left( C_1^{-2} - \left(\frac{1}{2H\omega} \pi \right)^2, C_1^{-2} \right)$. Consequently,
\begin{equation}\label{1Plus1_layers_determination_C1}
    \forall\, \ell\geq 1, \quad   C_1 = \lim\limits_{\omega\to+\infty} \frac{\omega}{k_\ell(\omega)} = \inf\limits_{\omega>0} \frac{\omega}{k_\ell(\omega)}  \,,
\end{equation}
and we (empirically) recover the value of $C_1$ from
\[
    \forall\, \ell\geq 1, \quad 1/C_1 = \sup\limits_{\omega>0} \frac{k_\ell(\omega)}{\omega} \,.
\]
Since we suppose $\rho_1$ to be known, the definition of $C_1$ gives $\mu_1$: $\mu_1 = \rho_1 C_1^2$.

Additionally, and by construction, we have
\[
    \forall\, \ell\geq 2\,, \quad \sqrt{\frac{1}{C_1^2} - \left( \frac{\ell-1}{\omega_\ell} \frac{\pi}{H} \right)^2} = \frac{1}{C_2} = \frac{k_\ell(\omega_\ell)}{\omega_\ell} = \lim\limits_{\omega\searrow\omega_1=0}\frac{k_1(\omega)}{\omega}
\]
and we consequently recover (empirically) the value of $C_2$ from
\begin{equation}\label{1Plus1_layers_determination_C2}
    \forall\, \ell\geq 1, \quad   C_2 = \lim\limits_{\omega\searrow\omega_\ell} \frac{\omega}{k_\ell(\omega)} = \sup\limits_{\omega>\omega_\ell} \frac{\omega}{k_\ell(\omega)} \,.
\end{equation}
\begin{figure}[H]
    \centering
    \includegraphics[scale=0.7]{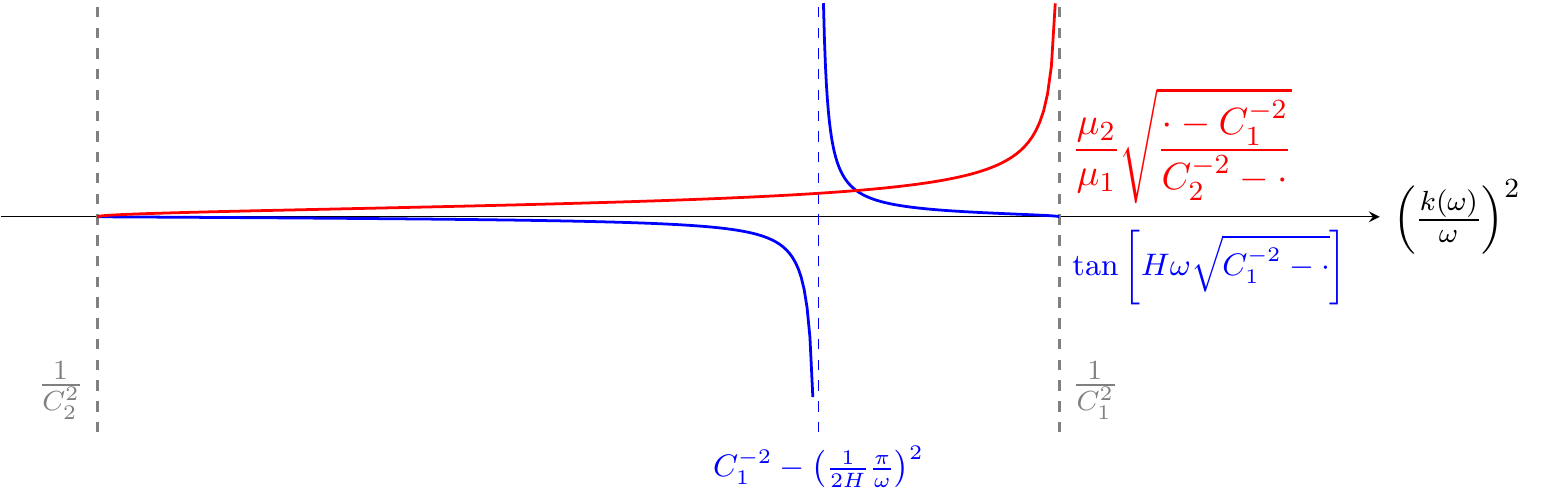}
    \caption{$\omega=\omega_2$.}
\end{figure}

Moreover, the knowledge of the maps $k_\ell(\omega)/\omega$'s allows us to (empirically) determine the $\omega_\ell$'s: they are the value below which $k_\ell$ ceases to exist.

With $C_1$, $C_2$, and the $\omega_\ell$'s recovered, the knowledge of two consecutive~$\omega_\ell$'s yields~$H$:
\begin{equation}\label{1Plus1_layers_determination_H}
     H = \frac{C_1 C_2}{\sqrt{C_2^2 - C_1^2}} \times \frac{\pi}{\omega_{\ell+1} - \omega_\ell}\,.
\end{equation}

Finally, to determine $\rho_2$ and, consequently, $\mu_2 = \rho_2 C_2^2$, we use the equation in~\eqref{nPlus1_layers_relation_defining_Love_waves}, which reads $\mu_1 |\nu_1(\omega, k)| \sin[ |\nu_1(\omega, k)| H ] = \mu_2 \nu_2(\omega, k) \cos[ |\nu_1(\omega, k)| H ]$ for $n=1$. Indeed, the knowledge of one couple~$(\omega, k_\ell(\omega))$, and of $\rho_1$, yields~$\rho_2$:
\begin{equation}\label{1Plus1_layers_determination_rho2}
     \rho_2 = \rho_1 \frac{C_1^2}{ C_2^2} \sqrt{\frac{C_1^{-2} - (k_\ell(\omega)/\omega)^2 }{ (k_\ell(\omega)/\omega)^2 - C_2^{-2} }} \tan \left[ H \omega \sqrt{ C_1^{-2} - (k_\ell(\omega)/\omega)^2} \right].
\end{equation}

\subsection{Proof of Weyl's law for the simple square well}\label{Section_asymptotics_N_for_n_equal_1}
We give here the proof of Proposition~\ref{Prop_asymptotics_N} in the case $n=1$ (but notice that we did not need it to recover the parameter of the problem). We want to prove that for any $y\in[1/C_\infty, 1/C_0)=[1/C_2, 1/C_1)$, as $\omega$ goes to $+\infty$, we have
\[
    N(\omega, y) \sim \frac{\omega}{\pi} |\tilde{\nu}_1(y)| \widetilde{T}_1 = \frac{\omega}{\pi} |\bar{\nu}_1(y)| H = \frac{\omega}{\pi} \sqrt{C_1^{-2} - y^2} H\,.
\]

By Definition~\ref{Def_number_branches_above_nPlus1_layers} of $N(\omega, y)$, we look for any fixed $y\in[1/C_2, 1/C_1)$, at the positive zeros of
\[
    \omega \mapsto \tilde{f}_1(\omega,y) = \mu_2 \bar\nu_2(y) \cos[\omega |\bar\nu_1(y)| H ] - \mu_1 |\bar\nu_1(y)| \sin[ \omega |\bar\nu_1(y)| H ] \,.
\]
Recall that $|\bar\nu_1(y)|>0$ for $y\in[1/C_2, 1/C_1)$. Note first that if $\omega$ is a zero of this function, then $\cos[ \omega |\bar\nu_1(y)| H ]\neq0$ as, otherwise, $\sin[ \omega |\bar\nu_1(y)| H ]=\pm1$ and we have the contradiction
\[
    \tilde{f}_1(\omega,y) = - \mu_1 |\bar\nu_1(y)| \sin[ \omega |\bar\nu_1(y)| H ]\neq0\,.
\]
Hence, $\tilde{f}_1(\omega,y)=0 \Leftrightarrow     \tan[ \omega |\bar\nu_1(y)| H ] = (\mu_2 \bar\nu_2(y) )/ (\mu_1 |\bar\nu_1(y)| ) \geq0$ and $\tilde{f}_1(\cdot,y)$ admits exactly one zero on each interval
\[
    \left[ \frac{p\pi}{|\bar{\nu}_1(y)| H}, \frac{(p+1)\pi}{|\bar{\nu}_1(y)| H} \right), \qquad p\geq0\,.
\]
Moreover, if $y>1/C_2$, then the $(p+1)$-th zero belongs to
\[
    \left( \frac{p\pi}{|\bar{\nu}_1(y)| H}, \frac{(p+1/2)\pi}{|\bar{\nu}_1(y)| H} \right), \qquad p\geq0\,,
\]
while if $y=1/C_2$, then the $p$-th positive zero $\omega_p$ is the $(p+1)$-th $\omega_\ell$:
\[
    \omega_{\ell=p+1} = \frac{p\pi}{|\bar{\nu}_1(y)| H} \,, \qquad p\geq1\,.
\]
The claim $N(\omega, y) \sim \omega |\bar{\nu}_1(y)| H / \pi$ for $y\in[1/C_2, 1/C_1)$ is therefore proved.

\section{The double square well}\label{Section_2Plus1_layers}
In this section, we specify to the double square well ($n=2$), i.e., the ``$2+1$~layers'' case:
\[
    (\mu(z), \rho(z)) =
    \left\{
        \begin{aligned}
            &(\mu_1, \rho_1)\,, \quad &&\textrm{if }\!\! \hphantom{H_2} 0 \leq z < H_2\,, \\
            &(\mu_2, \rho_2)\,, \quad &&\textrm{if }\!\! \hphantom{0} H_2 \leq z < H_3\,, \\
            &(\mu_3, \rho_3)\,, \quad &&\textrm{if }\!\! \hphantom{0} H_3 \leq z < +\infty\,,
        \end{aligned}
    \right.
\]
where $\mu_j,\rho_j>0$, $j=1,2,3$, with
\[
    C_\infty = C_3 = \sqrt{\mu_3/\rho_3} > \min\{\sqrt{\mu_1/\rho_1}, \sqrt{\mu_2/\rho_2}\} = \min\{C_1,C_2\} = C_0\,.
\]

Our goal is to retrieve the values of $H_2, H_3, C_1, C_2, C_3, \rho_2$ and $\rho_3$ from the knowledge of the frequency--wavenumber couples of the Love waves and from the knowledge of $\rho_1$.

Applying Proposition~\ref{Prop_recursive_formula_Dn} to $n=2$, $f_2$ defined in~\eqref{Def_f_n} reads
\begin{multline}\label{Def_f_2}
    f_2(\omega, k) := \left( \mu_3 \nu_3 \cosh[ \nu_1 T_1 ] + \mu_1 \nu_1 \sinh[ \nu_1 T_1 ] \right) \cosh[ \nu_2 T_2 ] \\
        + \left( \mu_2^2 \nu_2^2 \cosh[ \nu_1 T_1 ] + \mu_3 \nu_3 \mu_1 \nu_1 \sinh[ \nu_1 T_1 ] \right) \frac{\sinh[ \nu_2 T_2 ]}{\mu_2 \nu_2}\,,
\end{multline}
if $k\neq \omega/C_2$, and
\begin{equation}\label{Def_f_2_special_case}
    f_2(\omega, \omega/C_2) := \mu_3 \nu_3 \cosh[ \nu_1 T_1 ] + \mu_1 \nu_1 \sinh[ \nu_1 T_1 ] + \mu_3 \nu_3 \mu_1 \nu_1 \sinh[ \nu_1 T_1 ] \frac{T_2}{\mu_2}\,,
\end{equation}
where for shortness we omitted in the r.h.s.' the respective dependence in~$(\omega, k)$ and in~$(\omega, \omega/C_2)$ of the $\nu_j$'s.
We recall that $\nu_0, \nu_1, \nu_2 \in \R_+ \cup i\R_-$, and $\nu_3=\nu_\infty$ are defined in~\eqref{Def_nu}, with $\nu_3(\omega, k)>0$ for a Love wave to exists at~$(\omega, k)$.

We have the following stronger version of Proposition~\ref{Prop_infinitely_many_zeros_close_to_1overC0} for~$n=2$.
\begin{proposition}\label{Prop_zeros_ftilde_any_y_2Plus1_layers}
    Let $n=2$ and $\tilde{f}_2$ be as in Definition~\ref{Def_f_tilde_P_tilde_Q_tilde_nPlus1_layers} and fix $y\in[1/C_\infty, 1/C_0)$.
    \begin{enumerate}[label=(\roman*), leftmargin=1em]
        \item If $1/C_2 \leq y < 1/C_1$, then $\tilde{f}_2(\cdot,y)$ admits exactly one zero
        \[
            \omega_p(y) \in \left[ \frac{p\pi}{|\bar{\nu}_1(y)| T_1}, \frac{\left(p+\frac{1}{2}\right)\pi}{|\bar{\nu}_1(y)| T_1} \right)
            \text{ on each interval }
            \left[ \frac{p\pi}{|\bar{\nu}_1(y)| T_1}, \frac{(p+1)\pi}{|\bar{\nu}_1(y)| T_1} \right), \quad p\geq1\,.
        \]
        
        \item If $1/C_1 \leq y < 1/C_2$, then $\tilde{f}_2(\cdot,y)$ admits, at least for $p$ large enough, exactly one zero $\omega_p(y)$ on each interval
        \[
            \left[ \frac{p\pi}{|\bar{\nu}_2(y)| T_2}, \frac{(p+1)\pi}{|\bar{\nu}_2(y)| T_2} \right)
        \]
        with either $\omega_n(y)\in\left( p\pi/(|\bar{\nu}_2(y)| T_2), (p+1/2)\pi/(|\bar{\nu}_2(y)| T_2) \right]$ for all $p$ (large enough) or $\omega_n(y)\in\left( (p+1/2)\pi/(|\bar{\nu}_2(y)| T_2), (p+1)\pi/(|\bar{\nu}_2(y)| T_2) \right)$ for all $p$ (large enough).
        
        \item If $1/C_\infty\leq y<\min\{1/C_1, 1/C_2\}$, define
        \[
            \left\{
            \begin{aligned}
                m:={}&\min\{|\bar{\nu}_1(y)|T_1, |\bar{\nu}_2(y)|T_2\}\,, \\ M:={}&\max\{|\bar{\nu}_1(y)|T_1, |\bar{\nu}_2(y)|T_2\}\,.
            \end{aligned}
            \right.
        \]
        Then, there exists $\{\tilde\omega_p\}_{p\geq1}$ s.t.\ $\tilde\omega_{p+1}-\tilde\omega_p=\pi/m$, $\tilde\omega_1>0$, and $\tilde{f}_2(\cdot,y)$ admits on each interval $[\tilde\omega_p, \tilde\omega_{p+1})$ either $\lfloor M/m \rfloor + 1$ or $\lceil M/m \rceil + 1$ zeros.
    \end{enumerate}
\end{proposition}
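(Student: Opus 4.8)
The plan is to work throughout with the $2\times2$ matrices $\bar M_j(\omega,y)$ of~\eqref{Def_M_bar} and the identity $\binom{\tilde P_2}{\tilde Q_2}=\bar M_2\bar M_1\binom10$, so that $\tilde f_2=\mu_\infty\bar\nu_\infty\tilde P_2+\tilde Q_2$; recall that when $y<1/C_j$ one has $\bar\nu_j=-\ii|\bar\nu_j|$ and $\bar M_j$ is the rotation-type matrix $\big(\begin{smallmatrix}\cos\theta_j&\sin\theta_j/(\mu_j|\bar\nu_j|)\\-\mu_j|\bar\nu_j|\sin\theta_j&\cos\theta_j\end{smallmatrix}\big)$ with $\theta_j:=\omega|\bar\nu_j|T_j$, that when $y>1/C_j$ it has nonnegative entries with diagonal $\geq1$, and that $\bar M_j=\big(\begin{smallmatrix}1&T_j/\mu_j\\0&1\end{smallmatrix}\big)$ when $y=1/C_j$; also $\bar\nu_\infty(y)\geq0$. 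The three cases of the statement correspond to layer~$1$ oscillatory, layer~$2$ oscillatory, or both layers~$1$ and~$2$ oscillatory.

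In case~(i), $\bar M_1\binom10=\binom{\cos\theta_1}{-\mu_1|\bar\nu_1|\sin\theta_1}$, so $\tilde f_2=v_1\cos\theta_1-\mu_1|\bar\nu_1|\,v_2\sin\theta_1$ with $\binom{v_1}{v_2}=(\bar M_2)\transp\binom{\mu_\infty\bar\nu_\infty}{1}$, $v_1\geq0$, $v_2\geq1$; in case~(ii), symmetrically, $\tilde f_2=\mathcal A\cos\theta_2+\mathcal B\sin\theta_2$ with $\binom{w_1}{w_2}=\bar M_1\binom10$ evanescent, $\mathcal A=\mu_\infty\bar\nu_\infty w_1+w_2\geq0$, $\mathcal B=\mu_\infty\bar\nu_\infty w_2/(\mu_2|\bar\nu_2|)-\mu_2|\bar\nu_2|w_1$. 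The first key fact, which I would get by a short hyperbolic-function identity of the same flavour as the ``key remarks'' in the proof of Lemma~\ref{Lemma_zeros_ftilde_any_y_nPlus1_layers} (e.g.\ $(v_1/v_2)'=\frac{\bar\nu_2T_2}{\mu_2\bar\nu_2}\big((\mu_2\bar\nu_2)^2-(v_1/v_2)^2\big)$, and likewise for $\mathcal B/\mathcal A$), is that $\omega\mapsto v_1/v_2$ (resp.\ $\mathcal B/\mathcal A$) is \emph{monotone}, $v_1/v_2$ running from $\mu_\infty\bar\nu_\infty$ to $\mu_2\bar\nu_2$. I would then write $\tilde f_2=R(\omega)\cos(\theta_1+\psi(\omega))$ in case~(i) with $R>0$ and $\psi=\arctan(\mu_1|\bar\nu_1|v_2/v_1)\in(0,\tfrac\pi2]$, so the zeros solve $\theta_1+\psi(\omega)\in\tfrac\pi2+\pi\Z$. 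On the half-period $\big[\frac{p\pi}{|\bar\nu_1|T_1},\frac{(p+1/2)\pi}{|\bar\nu_1|T_1}\big)$ one reduces $\tilde f_2=0$ to $\tan\theta_1=(v_1/v_2)/(\mu_1|\bar\nu_1|)$; the $\omega$-derivative of $\tan\theta_1-(v_1/v_2)/(\mu_1|\bar\nu_1|)$ is $|\bar\nu_1|T_1\sec^2\theta_1$ minus a term whose absolute value decreases to $0$ (by decay of $(v_1/v_2)'$), hence changes sign at most once, from $-$ to $+$; since that function starts negative at $\theta_1=p\pi$ and tends to $+\infty$, it has exactly one zero, and $\psi\in(0,\tfrac\pi2]$ places it in the stated left half; on the complementary half-period $\cos\theta_1$ and $\sin\theta_1$ have opposite signs, so $\tilde f_2$ keeps a fixed sign. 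This yields case~(i) (the endpoint values $y=1/C_2$ or $y=1/C_\infty$ making $v_1,v_2$ constant and $\tilde f_2$ a genuine sinusoid, which is trivial; the indexing starts at $p=1$ because at $y=1/C_\infty$ the $p=0$ interval contains the excluded point $\omega=0$). In case~(ii) the same reduction gives $\tilde f_2=R(\omega)\cos(\theta_2-\chi(\omega))$, $\chi=\arctan(\mathcal B/\mathcal A)$, and only large $p$ is needed: there $\mathcal B/\mathcal A\to$ const forces $\chi'(\omega)\to0$, so $\theta_2-\chi(\omega)$ is eventually strictly increasing and gains $\pi+o(1)$ across each interval $\big[\frac{p\pi}{|\bar\nu_2|T_2},\frac{(p+1)\pi}{|\bar\nu_2|T_2}\big)$, giving exactly one zero there, in the left or right half according to the eventually constant sign of $\mathcal B$ — the ``either\ldots or\ldots'' alternative.

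In case~(iii), both $\bar M_1,\bar M_2$ are oscillatory and $\bar\nu_\infty\geq0$. Relabelling so that $M:=|\bar\nu_1|T_1\geq m:=|\bar\nu_2|T_2$ (the opposite case being identical after grouping by $\theta_2$), I would group $\tilde f_2$ by $\cos\theta_1,\sin\theta_1$ into $\tilde f_2=\mathcal A(\omega)\cos\theta_1+\mathcal B(\omega)\sin\theta_1$ with $\binom{\mathcal A}{\mathcal B}=L\binom{\cos\theta_2}{\sin\theta_2}$, $\theta_2=\omega m$, where $L=\big(\begin{smallmatrix}\mu_\infty\bar\nu_\infty&-\mu_2|\bar\nu_2|\\-\mu_1|\bar\nu_1|&-\mu_1|\bar\nu_1|\mu_\infty\bar\nu_\infty/(\mu_2|\bar\nu_2|)\end{smallmatrix}\big)$ is constant with $\det L=-\mu_1|\bar\nu_1|\big(\mu_2|\bar\nu_2|+\mu_\infty^2\bar\nu_\infty^2/(\mu_2|\bar\nu_2|)\big)<0$. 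Since $L$ is invertible, $R(\omega):=\sqrt{\mathcal A^2+\mathcal B^2}>0$ for all $\omega$, and writing $\tilde f_2=R(\omega)\cos(\theta_1-\phi(\omega))$ with $\phi=\arg(\mathcal A+\ii\mathcal B)$, the point $(\mathcal A,\mathcal B)$ runs along a fixed non-degenerate ellipse \emph{centred at the origin}, so $\phi$ is a strictly monotone function of $\theta_2$ — decreasing because $\det L<0$ — changing by exactly $-\pi$ whenever $\theta_2$ increases by $\pi$. Hence $\Phi(\omega):=\omega M-\phi(\omega)$ is strictly increasing ($\Phi'=M-\phi'>0$) and gains exactly $\pi(M/m)+\pi$ on every $\omega$-interval of length $\pi/m$. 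Taking $\tilde\omega_p:=p\pi/m$ (so $\tilde\omega_{p+1}-\tilde\omega_p=\pi/m$, $\tilde\omega_1>0$), the zeros of $\tilde f_2=R\cos\Phi$ on $[\tilde\omega_p,\tilde\omega_{p+1})$ are the $\omega$ with $\Phi(\omega)\in\tfrac\pi2+\pi\Z$, and since $\Phi$ is continuous, strictly increasing and gains exactly $\pi(M/m+1)$ across this half-open interval, their number is $\lfloor M/m\rfloor+1$ or $\lceil M/m\rceil+1$. (If $\mu_1|\bar\nu_1|=\mu_2|\bar\nu_2|$ the $(\theta_1-\theta_2)$-part of $\tilde f_2$ cancels, $\tilde f_2$ is a pure sinusoid of frequency $M+m$, and the count is exactly $M/m+1$.)

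The step I expect to be the main obstacle is the \emph{uniqueness} claim in cases~(i)--(ii): the total phase $\theta_j+\psi(\omega)$ need not be globally monotone (its derivative can be negative near $\theta_j\in\pi\Z$), so one must genuinely combine the decay of the phase-derivative with the blow-up of $\sec^2\theta_j$ near $\theta_j\in\tfrac\pi2+\pi\Z$ to exclude a second crossing — this is exactly what forces the case~(i) conclusion to hold for \emph{every} $p\geq1$ rather than only eventually. In case~(iii) the only delicate point is fixing the \emph{sign} of the per-half-period phase gain (equivalently, the orientation of the ellipse, $\det L<0$), which is what makes the count $M/m+1$ rather than $M/m-1$.
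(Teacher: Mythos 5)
Your proof is correct and, while it starts from the same transfer-matrix factorization of $\tilde{f}_2$ as the paper (grouping the expression by the trigonometric functions of an oscillatory layer), the counting arguments are genuinely different. The paper reduces each case to an explicit equation of the form $\tan\theta=g(\omega)$ and counts crossings sub-case by sub-case ($y=1/C_2$, $y=1/C_1$, $|\bar{\nu}_1|T_1=|\bar{\nu}_2|T_2$, the two orderings of $|\bar{\nu}_1|T_1$ and $|\bar{\nu}_2|T_2$, and the sign of $\mu_3\bar{\nu}_3\mu_1\bar{\nu}_1-\mu_2^2|\bar{\nu}_2|^2$), whereas you write $\tilde{f}_2=R(\omega)\cos(\theta-\phi(\omega))$ and control the total phase. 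In case \emph{(iii)} your observation that $(\mathcal{A},\mathcal{B})$ traces an origin-centred ellipse with $d\phi/d\theta_2=\det L/R^2<0$, so that $\phi$ drops by exactly $\pi$ per half-period of the slow phase, unifies all sub-cases (including $m=M$ and $y=1/C_\infty$) and yields the exact phase gain $\pi(M/m+1)$ in one stroke --- provided, as you correctly do, one groups by the \emph{faster} phase so that the ellipse is parametrized by the slower one (grouping the other way would only give the gain up to an $O(\pi)$ error and would not close the count). In cases \emph{(i)}--\emph{(ii)} your Riccati identity for $v_1/v_2$ and the resulting quasi-convexity of $\tan\theta_1-(v_1/v_2)/(\mu_1|\bar{\nu}_1|)$ supply the ``exactly one zero per period'' detail that the paper essentially asserts from the sign pattern of the coefficients, so your route is if anything more complete there. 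Two small points: your choice $\tilde\omega_p=p\pi/m$ differs from the paper's anchoring at consecutive zeros of a shifted tangent, which is fine since the statement only asks for the existence of some such sequence; and your parenthetical that at $y=1/C_2$ the coefficients $v_1,v_2$ are constant is inaccurate when $C_2<C_\infty$ (there $v_2$ grows linearly in $\omega$, the unipotent block contributing an entry $\omega T_2/\mu_2$), but this is harmless because $v_1/v_2$ is then decreasing with derivative tending to zero and your monotonicity argument applies verbatim.
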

Note that \emph{(i)} covers in particular the non-standard setting $C_1<C_\infty\leq C_2$, and that \emph{(iii)} covers in particular the degenerated setting $C_1=C_2< C_\infty$.
\begin{proof}
    \textbf{Case $y=1/C_2$.} Hence, necessarily $C_\infty = C_3 \geq C_2 > C_1 = C_0$ and we have
    \[
        \tilde{f}_2(\omega,y) = 0 \, \Leftrightarrow \, \mu_3 \bar{\nu}_3(y) \cos[ \omega |\bar{\nu}_1(y)| T_1 ] = \left( 1 + \omega \mu_3 \bar{\nu}_3(y) \frac{T_2}{\mu_2} \right) \mu_1 |\bar{\nu}_1(y)| \sin[ \omega |\bar{\nu}_1(y)| T_1 ]\,,
    \]
    for any $\omega>0$. Thus, if $C_2<C_\infty$, then in every $\left[ p\pi/(|\bar{\nu}_1(y)| T_1), (p+1)\pi/(|\bar{\nu}_1(y)| T_1) \right)$, $p\geq1$, $\tilde{f}_2(\cdot, y)$ has exactly one zero $\omega_p\in \left( p\pi/(|\bar{\nu}_1(y)| T_1), (p+1/2)\pi/(|\bar{\nu}_1(y)| T_1) \right)$, while if $C_2=C_\infty$, then the zeros of $\tilde{f}_2(\cdot, y)$ are the $\omega_p = p\pi/(|\bar{\nu}_1(y)| T_1)$, $p\geq1$.
    
    \medskip
    The special case $y=1/C_2$ of \emph{(i)} is therefore proved and, from now on, we assume $y\in[1/C_\infty, 1/C_0)\setminus\{1/C_2\}$. For any $\omega>0$, we have
    \begin{align*}
        \tilde{f}_2(\omega, y)
        &=\begin{multlined}[t][0.8\textwidth]
            \left( \mu_3 \bar{\nu}_3 \cosh[ \omega \bar{\nu}_1 T_1 ] + \mu_1 \bar{\nu}_1 \sinh[ \omega \bar{\nu}_1 T_1 ] \right) \cosh[ \omega \bar{\nu}_2 T_2 ] \\
            + \left( \mu_2^2 \bar{\nu}_2^2 \cosh[ \omega \bar{\nu}_1 T_1 ] + \mu_3 \bar{\nu}_3 \mu_1 \bar{\nu}_1 \sinh[ \omega \bar{\nu}_1 T_1 ] \right) \frac{\sinh[ \omega \bar{\nu}_2 T_2 ]}{\mu_2 \bar{\nu}_2}
        \end{multlined} \\
        &=\begin{multlined}[t][0.8\textwidth]
            \left( \mu_3 \bar{\nu}_3 \cosh[ \omega \bar{\nu}_2 T_2 ] + \mu_2 \bar{\nu}_2 \sinh[ \omega \bar{\nu}_2 T_2 ] \right) \cosh[ \omega \bar{\nu}_1 T_1 ] \\
            + \left( \cosh[ \omega \bar{\nu}_2 T_2 ] + \frac{\mu_3 \bar{\nu}_3 }{\mu_2 \bar{\nu}_2} \sinh[ \omega \bar{\nu}_2 T_2 ] \right) \mu_1 \bar{\nu}_1 \sinh[ \omega \bar{\nu}_1 T_1 ]\,.
        \end{multlined}
    \end{align*}
    
    We always have $\bar{\nu}_3\geq0$, but for $\bar{\nu}_1$ and $\bar{\nu}_2$, there are three distinct situations to consider: $1/C_2 < y < 1/C_1$, $1/C_1 \leq y < 1/C_2$, and $y<\min\{1/C_1, 1/C_2\}$.
    
    \medskip
    \textbf{Case $1/C_2 < y < 1/C_1$.}
    Then, $\bar{\nu}_1 = -i|\bar{\nu}_1|$, $\bar{\nu}_2>0$, and $\tilde{f}_2$ reads
    \begin{multline*}
        \tilde{f}_2(\omega, y) = \left( \mu_3 \bar{\nu}_3 \cosh[ \omega \bar{\nu}_2 T_2 ] + \mu_2 \bar{\nu}_2 \sinh[ \omega \bar{\nu}_2 T_2 ] \right) \cos[ \omega |\bar{\nu}_1| T_1 ] \\
        - \mu_1 |\bar{\nu}_1| \left( \cosh[ \omega \bar{\nu}_2 T_2 ] + \frac{\mu_3 \bar{\nu}_3}{\mu_2 \bar{\nu}_2} \sinh[ \omega \bar{\nu}_2 T_2 ] \right) \sin[ \omega |\bar{\nu}_1| T_1 ]\,,
    \end{multline*}
    where the factors of $\cos[ \omega |\bar{\nu}_1| T_1 ]$ and $\sin[ \omega |\bar{\nu}_1| T_1 ]$ are respectively positive and negative. Hence, in every $\left[ p\pi/(|\bar{\nu}_1(y)| T_1), (p+1)\pi/(|\bar{\nu}_1(y)| T_1) \right)$, $p\geq1$, $\tilde{f}_2(\cdot, y)$ has exactly one zero $\omega_p\in \left( p\pi/(|\bar{\nu}_1(y)| T_1), (p+1/2)\pi/(|\bar{\nu}_1(y)| T_1) \right)$. This concludes the proof of \emph{(i)}.
    
    \medskip
    \textbf{Case $1/C_1 \leq y < 1/C_2$} (proof of~\emph{(ii)}). Then, $\bar{\nu}_2 = -i|\bar{\nu}_2|$, and $\bar{\nu}_1\geq0$, and $\tilde{f}_2$ reads
    \begin{align*}
        \frac{\tilde{f}_2(\omega, y)}{\cosh[ \omega \bar{\nu}_1 T_1 ]} = &\left( \mu_3 \bar{\nu}_3 + \mu_1 \bar{\nu}_1 \tanh[ \omega \bar{\nu}_1 T_1 ] \right) \cos[ \omega |\bar{\nu}_2| T_2 ] \\
        &
        + \left( \mu_3 \bar{\nu}_3 \mu_1 \bar{\nu}_1 \tanh[ \omega \bar{\nu}_1 T_1 ] - \mu_2^2 |\bar{\nu}_2|^2 \right) \frac{\sin[ \omega |\bar{\nu}_2| T_2 ]}{\mu_2 |\bar{\nu}_2|}\,.
    \end{align*}
    \begin{itemize}[leftmargin=2em]
        \item If $y=1/C_1$, then $\bar{\nu}_1=0$ and the positive zeros of $\tilde{f}_2(\cdot, y)$ are the ones of $\tan[ \omega |\bar{\nu}_2| T_2 ] - \mu_3 \bar{\nu}_3/(\mu_2 |\bar{\nu}_2|)$. Hence, they are spaced exactly by $\pi/(|\bar{\nu}_2(y)| T_2)$ and belong to the intervals $\left( p\pi/(|\bar{\nu}_2(y)| T_2), (p+1/2)\pi/(|\bar{\nu}_2(y)| T_2) \right)$ if $1/C_3<1/C_1$, while they are the $\omega_p = (p+1/2)\pi/(|\bar{\nu}_2(y)| T_2)$, $p\geq1$, if $1/C_3=1/C_1$.

	\smallskip
        
        \item For $y>1/C_1$, on the one hand, if $y$ is s.t.\ $\mu_3 \bar{\nu}_3(y) \mu_1 \bar{\nu}_1(y) \leq \mu_2^2 |\bar{\nu}_2(y)|^2$ ---which allows $y=1/C_3$---, then in every $\left[ p\pi/(|\bar{\nu}_2(y)| T_2), (p+1)\pi/(|\bar{\nu}_2(y)| T_2) \right)$, $p\geq1$, $\tilde{f}_2(\cdot, y)$ has exactly one zero $\omega_p\in \left( p\pi/(|\bar{\nu}_2(y)| T_2), (p+1/2)\pi/(|\bar{\nu}_2(y)| T_2) \right]$, similarly to previously. On the other hand, if $\mu_3 \bar{\nu}_3(y) \mu_1 \bar{\nu}_1(y) > \mu_2^2 |\bar{\nu}_2(y)|^2$ ---which excludes $y=1/C_3$---, then for $\omega$ large enough, the factor of $\sin[ \omega |\bar{\nu}_2| T_2 ]$ is positive. Thus, for $p$ large enough, in every $\left[ p\pi/(|\bar{\nu}_2(y)| T_2), (p+1)\pi/(|\bar{\nu}_2(y)| T_2) \right)$ the function $\tilde{f}_2(\cdot, y)$ has exactly one zero $\omega_p\in \left( (p+1/2)\pi/(|\bar{\nu}_2(y)| T_2), (p+1)\pi/(|\bar{\nu}_2(y)| T_2) \right)$.
    \end{itemize}
    This concludes the proof of \emph{(ii)}.
    
    \medskip
    \textbf{Case $1/C_3\leq y<\min\{1/C_1, 1/C_2\}$} (proof of~\emph{(iii)}. Then, $\tilde{f}_2$ reads
    \begin{multline*}
        \tilde{f}_2(\omega, y) = \left( \mu_3 \bar{\nu}_3 \cos[ \omega |\bar{\nu}_2| T_2 ] - \mu_2 |\bar{\nu}_2| \sin[ \omega |\bar{\nu}_2| T_2 ] \right) \cos[ \omega |\bar{\nu}_1| T_1 ] \\
        - \mu_1 |\bar{\nu}_1| \left( \cos[ \omega |\bar{\nu}_2| T_2 ] + \frac{\mu_3 \bar{\nu}_3}{\mu_2 |\bar{\nu}_2|} \sin[ \omega |\bar{\nu}_2| T_2 ] \right) \sin[ \omega |\bar{\nu}_1| T_1 ]\,.
    \end{multline*}
    \begin{itemize}[leftmargin=2em]
        \item If $|\bar{\nu}_1|T_1=|\bar{\nu}_2|T_2=m=M$, it reduces to
        \[
            \tilde{f}_2(\omega, y) = \mu_3 \bar{\nu}_3 \cos^2[ \omega m ] - (\mu_1|\bar{\nu}_1|+\mu_2|\bar{\nu}_2|) \cos[ \omega m ] \sin[ \omega m ] - \mu_3 \bar{\nu}_3\frac{\mu_1|\bar{\nu}_1|}{\mu_2|\bar{\nu}_2|} \sin^2[ \omega m ]\,.
        \]
        
        If $y>1/C_3$, since the r.h.s.\ does not vanish when $\cos[ \omega m ]=0$, then the l.h.s.\ shares its zeros with
        \[
            \mu_3 \bar{\nu}_3 - (\mu_1|\bar{\nu}_1|+\mu_2|\bar{\nu}_2|) \tan[ \omega m ] - \mu_3 \bar{\nu}_3 \frac{\mu_1|\bar{\nu}_1|}{\mu_2|\bar{\nu}_2|} \tan^2[ \omega m ]\,,
        \]
        i.e., the $\omega$'s s.t.
        \[
            \tan[ \omega m ] =
            \left\{
                \begin{aligned}
                    - &\mu_2|\bar{\nu}_2| \frac{ \sqrt{ (\mu_1|\bar{\nu}_1|+\mu_2|\bar{\nu}_2|)^2 + 4 (\mu_3 \bar{\nu}_3)^2 \frac{\mu_1|\bar{\nu}_1|}{\mu_2|\bar{\nu}_2|} } + (\mu_1|\bar{\nu}_1|+\mu_2|\bar{\nu}_2|) }{ 2 \mu_3 \bar{\nu}_3 \mu_1|\bar{\nu}_1| } < 0 \\
                    \text{or}\,& \\
                    &\mu_2|\bar{\nu}_2| \frac{ \sqrt{ (\mu_1|\bar{\nu}_1|+\mu_2|\bar{\nu}_2|)^2 + 4 (\mu_3 \bar{\nu}_3)^2 \frac{\mu_1|\bar{\nu}_1|}{\mu_2|\bar{\nu}_2|} } - (\mu_1|\bar{\nu}_1|+\mu_2|\bar{\nu}_2|) }{ 2 \mu_3 \bar{\nu}_3 \mu_1|\bar{\nu}_1| } > 0\,.
                \end{aligned}
            \right.
        \]
        Hence, in every $\left[ p\pi/(2m), (p+1)\pi/(2m) \right)$, $p\geq1$, the function $\tilde{f}_2(\cdot, y)$ has exactly one zero $\omega_p$. That is, exactly  $2=\lfloor M/m \rfloor + 1=\lceil M/m \rceil + 1$ zeros in every interval $\left[ p\pi/m, (p+1)\pi/m \right)$ of length $\pi/m$.
        
        If $y=1/C_3$, then the positive zeros are the positive $\omega$'s s.t.\ $\cos[ \omega m ] \sin[ \omega m ]=0$, i.e., the elements of $\{n\pi/(2m)\}_{n\geq1}$, and the claim is also proved.

	\smallskip
            
        \item If $M=|\bar{\nu}_1|T_1>|\bar{\nu}_2|T_2=m$, then we consider the sequence $\tilde\omega_p$ of consecutive positive zeros of $\tan[ \omega m ] + \mu_2 |\bar{\nu}_2|/(\mu_3 \bar{\nu}_3)$ if $y>1/C_3$ and of $\cos[ \omega m ]$ if $y=1/C_3$. Considering on $(\tilde\omega_p, \tilde\omega_{p+1})$ the function
        \[
            \mu_1 |\bar{\nu}_1| \tan[ \omega M ] - \frac{ \mu_3 \bar{\nu}_3 - \mu_2 |\bar{\nu}_2| \tan[ \omega m ] }{ 1 + \frac{\mu_3 \bar{\nu}_3}{\mu_2 |\bar{\nu}_2|} \tan[ \omega m ] }\,,
        \]
        one can check the following.
        Either $\cos[ \tilde\omega_p M ]=0$ and the function has exactly $\lceil M/m \rceil$ zeros on $(\tilde\omega_p, \tilde\omega_{p+1})$, hence $\tilde{f}_2(\cdot,y)$ admits exactly $\lceil M/m \rceil + 1$ zeros on $[\tilde\omega_p, \tilde\omega_{p+1})$: $\tilde\omega_p$ plus these $\lceil M/m \rceil$ zeros in $(\tilde\omega_p, \tilde\omega_{p+1})$; or $\cos[ \tilde\omega_p M ]\neq0$ and $\cos[ \tilde\omega_p M ]$ has either $\lfloor M/m \rfloor$ or $\lceil M/m \rceil$ zeros in $(\tilde\omega_p, \tilde\omega_{p+1})$ hence the function has either $\lfloor M/m \rfloor + 1$ or $\lceil M/m \rceil + 1$ zeros on $(\tilde\omega_p, \tilde\omega_{p+1})$, and $\tilde{f}_2(\cdot,y)$ admits also either $\lfloor M/m \rfloor + 1$ or $\lceil M/m \rceil + 1$ zeros on $[\tilde\omega_p, \tilde\omega_{p+1})$.

	\smallskip
        
        \item If $m=|\bar{\nu}_1|T_1<|\bar{\nu}_2|T_2=M$, then we consider the sequence $\tilde\omega_p$ of consecutive positive zeros of $\tan[ \omega m ] + (\mu_2 |\bar{\nu}_2|)^2/(\mu_1 \bar{\nu}_1 \mu_3 \bar{\nu}_3)$ if $y>1/C_3$ and of $\cos[ \omega m ]$ if $y=1/C_3$. The same result is obtained working on
        \[
            \mu_2 |\bar{\nu}_2| \tan[ \omega M ] - \frac{ \mu_3 \bar{\nu}_3 - \mu_1 |\bar{\nu}_1| \tan[ \omega m ] }{ 1 + \frac{\mu_3 \bar{\nu}_3}{\mu_2 |\bar{\nu}_2|}\frac{\mu_1 \bar{\nu}_1}{\mu_2 |\bar{\nu}_2|} \tan[ \omega m ] }\,.
        \]
    \end{itemize}
    Summarizing, in the case $1/C_3\leq y<\min\{1/C_1, 1/C_2\}$, we have found intervals $[\tilde\omega_p, \tilde\omega_{p+1})$ of length $\pi/m$ partitioning $\R_+$ such that $\tilde{f}_2(\cdot,y)$ admits on each of them either $\lfloor M/m \rfloor + 1$ or $\lceil M/m \rceil + 1$ zeros.
    This concludes the proof of \emph{(iii)}.
\end{proof}

We can now prove Proposition~\ref{Prop_asymptotics_N} in the case $n=2$, that we recall in Proposition~\ref{Prop_asymptotics_N_for_n_equal_2} below. The proof is in the spirit of the one of Proposition~\ref{Prop_zeros_ftilde_any_y_2Plus1_layers} (and is actually an immediate consequence of it in its cases~\emph{(i)} and~\emph{(ii)}). This result will allow, thanks to Corollary~\ref{Cor_recover_C1_C2_2Plus1_layers} below, to recover $\widetilde{C}_2$ (and $C_0$) as well as $\widetilde{T}_1$ and $\widetilde{T}_2$.
\begin{proposition}\label{Prop_asymptotics_N_for_n_equal_2}
    Let $y\in[1/C_\infty, 1/C_0)$ be fixed.
    As $\omega$ goes to $+\infty$, we have
    \[
        \left\{
        \begin{aligned}
            &N(\omega, y) \sim \frac{\omega}{\pi} |\tilde{\nu}_1(y)| \widetilde{T}_1\,,\, &\text{if } y\in[1/\widetilde{C}_2, 1/C_0)\,, \\
            &N(\omega, y) \sim \frac{\omega}{\pi} \left( |\tilde{\nu}_1(y)| \widetilde{T}_1 + |\tilde{\nu}_2(y)| \widetilde{T}_2 \right),\, &\text{if } y\in[1/C_\infty, 1/\widetilde{C}_2)\,.
        \end{aligned}
        \right.
    \]
\end{proposition}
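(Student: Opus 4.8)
The plan is to turn the statement into a problem of counting the positive zeros of $\omega\mapsto\tilde f_2(\omega,y)$ below $\omega$, and then to read that count off from Proposition~\ref{Prop_zeros_ftilde_any_y_2Plus1_layers}. First I would record that, for every $y\in[1/C_\infty,1/C_0)$,
\[
    N(\omega,y)=\#\bigl\{\,\omega'\in(0,\omega]\ :\ \tilde f_2(\omega',y)=0\,\bigr\}+O(1).
\]
Indeed, by Theorem~\ref{Thm_monotonicity_nPlus1_layers} each $\omega\mapsto k_\ell(\omega)/\omega$ is strictly increasing and bijective from $(\omega_\ell,+\infty)$ onto $(1/C_\infty,1/C_0)$, so $k_\ell(\omega)/\omega\ge y$ holds exactly when $\omega$ is at least the unique crossing frequency $\hat\omega_\ell(y)$ with $k_\ell(\hat\omega_\ell(y))=\hat\omega_\ell(y)\,y$; by Lemmas~\ref{Alt_Def_kell_nPlus1_layers} and~\ref{Equivalence_zeros_f_n_and_tilde_f_n} these $\hat\omega_\ell(y)$'s are precisely the positive zeros of $\tilde f_2(\cdot,y)$, and by the simplicity of the $k_\ell$'s (Corollary~\ref{simplicity_k}) the map $\ell\mapsto\hat\omega_\ell(y)$ is a bijection onto them; the $O(1)$ is present only in the boundary case $y=1/C_\infty$ and accounts for the finitely many branches with $\omega_\ell=0$. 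Everything then reduces to the zero structure described by Proposition~\ref{Prop_zeros_ftilde_any_y_2Plus1_layers}.

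For $y\in[1/\widetilde{C}_2,1/C_0)$ the conclusion is immediate: one is in case~(i) or case~(ii) of Proposition~\ref{Prop_zeros_ftilde_any_y_2Plus1_layers} — according to which of $C_1,C_2$ realizes $C_0$ (or, when $C_\infty<\max\{C_1,C_2\}$, which of them exceeds $C_\infty$) — so $\tilde f_2(\cdot,y)$ has exactly one zero in each interval $\bigl[p\pi/(|\tilde{\nu}_1(y)|\widetilde{T}_1),(p+1)\pi/(|\tilde{\nu}_1(y)|\widetilde{T}_1)\bigr)$, $p\ge0$, with $\tilde{\nu}_1,\widetilde{T}_1$ the data of the slowest layer. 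Counting the intervals contained in $(0,\omega]$ gives $N(\omega,y)=\tfrac\omega\pi|\tilde{\nu}_1(y)|\widetilde{T}_1+O(1)$, the first asymptotics.

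The remaining range $y\in[1/C_\infty,1/\widetilde{C}_2)$ is nonempty only when $C_\infty>\max\{C_1,C_2\}$, i.e.\ exactly in the setting of case~(iii), in which $\widetilde{C}_1,\widetilde{C}_2\in\{C_1,C_2\}$ and $\widetilde{C}_3=C_\infty$, so that $|\bar\nu_1(y)|T_1+|\bar\nu_2(y)|T_2=|\tilde{\nu}_1(y)|\widetilde{T}_1+|\tilde{\nu}_2(y)|\widetilde{T}_2$. Writing $m=\min\{|\bar\nu_1(y)|T_1,|\bar\nu_2(y)|T_2\}$ and $M$ for the corresponding maximum, Proposition~\ref{Prop_zeros_ftilde_any_y_2Plus1_layers}(iii) produces points $\{\tilde\omega_p\}$ with $\tilde\omega_{p+1}-\tilde\omega_p=\pi/m$ such that $\tilde f_2(\cdot,y)$ has $c_p\in\{\lfloor M/m\rfloor+1,\lceil M/m\rceil+1\}$ zeros in $[\tilde\omega_p,\tilde\omega_{p+1})$; with $P(\omega)=\tfrac{\omega m}{\pi}+O(1)$ the number of such intervals inside $(0,\omega]$, this yields $N(\omega,y)=\sum_{p\le P(\omega)}c_p+O(1)$. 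The step I expect to be the main obstacle is to upgrade this to $\sum_{p\le P}c_p=P(1+M/m)+O(1)$, i.e.\ to show that the fluctuation of $c_p$ averages out \emph{exactly} to $M/m+1$, so that $N(\omega,y)\sim\tfrac\omega\pi(m+M)$ rather than merely being proportional to $\omega$ with an unidentified constant. I would obtain this from the finer bookkeeping in the proof of Proposition~\ref{Prop_zeros_ftilde_any_y_2Plus1_layers}(iii): $c_p-1$ is the number of zeros in the \emph{open} interval $(\tilde\omega_p,\tilde\omega_{p+1})$ of the relevant rotation of $\omega\mapsto\cos[\omega M]$, plus one extra precisely when $\tilde\omega_p$ is itself a zero of $\tilde f_2$; that extra is compensated by the $\cos[\omega M]$-zero sitting at the excluded endpoint $\tilde\omega_p$, so $\sum_{p\le P}(c_p-1)$ telescopes to $\#\{\text{zeros of }\cos[\omega M]\text{ in }(\tilde\omega_1,\tilde\omega_{P+1})\}+O(1)=\tfrac{\tilde\omega_{P+1}-\tilde\omega_1}{\pi/M}+O(1)=PM/m+O(1)$, whence $N(\omega,y)=\tfrac{\omega m}{\pi}(1+M/m)+O(1)=\tfrac\omega\pi\bigl(|\tilde{\nu}_1(y)|\widetilde{T}_1+|\tilde{\nu}_2(y)|\widetilde{T}_2\bigr)+O(1)$, the second asymptotics. (Alternatively one can bypass the bookkeeping: in case~(iii), $\tilde f_2(\cdot,y)$ is a trigonometric polynomial with spectrum $\{\pm(|\bar\nu_1|T_1\pm|\bar\nu_2|T_2)\}$ whose highest-frequency coefficient is strictly dominant in modulus — a one-line algebraic check — so writing $\tilde f_2(\omega,y)=\Re\bigl(e^{\ii(|\bar\nu_1|T_1+|\bar\nu_2|T_2)\omega}\zeta(\omega)\bigr)$ with $\zeta$ bounded away from $0$ and of bounded winding, the phase increases by $(|\bar\nu_1|T_1+|\bar\nu_2|T_2)\omega+O(1)$ and one reads off the same count of sign changes.)
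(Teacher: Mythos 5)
Your proposal is correct and follows essentially the same route as the paper's proof: reduce $N(\omega,y)$ to the number of zeros of $\tilde f_2(\cdot,y)$ in $(0,\omega]$ (via monotonicity and bijectivity of the branches), invoke Proposition~\ref{Prop_zeros_ftilde_any_y_2Plus1_layers}\emph{(i)}--\emph{(ii)} when $y\in[1/\widetilde{C}_2,1/C_0)$, and for $y\in[1/C_\infty,1/\widetilde{C}_2)$ redo the interval bookkeeping of part~\emph{(iii)}, including the same endpoint-compensation argument showing that the average number of zeros per interval of length $\pi/m$ is exactly $1+M/m$. Your explicit justification of the reduction to zero-counting (which the paper leaves implicit) and the parenthetical dominant-coefficient/winding alternative for case~\emph{(iii)} are the only departures, and both are sound.
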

Notice that on the one hand if $C_1=C_2$, then $1/C_\infty< 1/\widetilde{C}_2 = 1/C_0$ and the first case is empty but not the second one, which then reads $N(\omega, y) \sim \omega |\tilde{\nu}_1(y)| ( \widetilde{T}_1 + \widetilde{T}_2 ) / \pi$, and that on the other hand if $1/C_\infty\geq 1/\widetilde{C}_2$, then $1/C_0>1/C_\infty\geq 1/\widetilde{C}_2$ and the second case is empty but not the first one.
\begin{proof}
    When $y\in[1/\widetilde{C}_2, 1/C_0)$, this is an immediate consequence of Proposition~\ref{Prop_zeros_ftilde_any_y_2Plus1_layers}\emph{(i)}--\emph{(ii)}.
    
    When $y\in[1/C_\infty, 1/\widetilde{C}_2)$, we follow the proof of Proposition~\ref{Prop_zeros_ftilde_any_y_2Plus1_layers}\emph{(iii)}.
    \begin{itemize}[leftmargin=2em]
        \item If $|\bar{\nu}_1|T_1=|\bar{\nu}_2|T_2=m=M$, the result is an immediate consequence of the proof of Proposition~\ref{Prop_zeros_ftilde_any_y_2Plus1_layers} \emph{(iii)} for that case, since the proof gives either, for $y>1/C_\infty$, the exact number of zeros ---two zeros in every interval $\left[ p\pi/m, (p+1)\pi/m \right)$ of length $\pi/m$--- or, for $y=1/C_\infty$, the exact location of all the zeros ---$\{n\pi/(2m)\}_{n\geq1}$.
            
        \item If $m=\min\{|\bar{\nu}_1|T_1, |\bar{\nu}_2|T_2 \} < \max\{|\bar{\nu}_1|T_1, |\bar{\nu}_2|T_2 \}=M$ ---here, we group together the second and third subcases of the proof of Proposition~\ref{Prop_zeros_ftilde_any_y_2Plus1_layers} \emph{(iii)}---, then the proof of Proposition~\ref{Prop_zeros_ftilde_any_y_2Plus1_layers} \emph{(iii)} shows that we have to study on $(\tilde\omega_p, \tilde\omega_{p+1})$ the function
    \[
        \tan[ \omega M ] - \frac{ \alpha - \beta \tan[ \omega m ] }{ 1 + \gamma \tan[ \omega m ] }
    \]
    either, if $y>1/C_\infty$, for $\alpha, \beta, \gamma > 0$ and where the $\tilde\omega_p$'s are the consecutive positive zeros of $1 + \gamma \tan[ \omega m ]$, or, if $y=1/C_\infty$, for $\beta>0 = \alpha = \gamma$ and where the $\tilde\omega_p$'s are the consecutive positive zeros of $\cos[ \omega m ]$. In both cases, the number of zeros in $(\tilde\omega_p, \tilde\omega_{p+1})$ of the studied function is one plus the number of zeros of $\cos[ \omega M ]$ in that interval of length~$\pi/m$. Moreover, $\tilde\omega_p$ is itself a positive zero of $\tilde{f}_2(\cdot,y)$ if and only if it is a zero of $\cos[ \omega M ]$. Thus, the average of the number of positive zeros of $\tilde{f}_2(\cdot,y)$ on the intervals $[\tilde\omega_p, \tilde\omega_{p+1})$ is $1+M/m$. Consequently, when $\omega\to+\infty$,
    \[
        N(\omega, y) \sim \frac{\omega}{\pi/m} \left( 1+\frac{M}{m} \right) = \frac{\omega}{\pi} (m + M) = \frac{\omega}{\pi} \left( |\tilde{\nu}_1(y)| \widetilde{T}_1 + |\tilde{\nu}_2(y)| \widetilde{T}_2 \right). \qedhere
    \]
    \end{itemize}
\end{proof}
\begin{corollary}\label{Cor_recover_C1_C2_2Plus1_layers}
    Let $n=2$ and $y\in[1/C_\infty, 1/C_0)$. Then,
    \[
        \lim\limits_{\omega\to+\infty} \pi \frac{N(\omega,y-\omega^{-1})-N(\omega,y)}{\sqrt{2\omega}} >0 \quad \Leftrightarrow \quad y \in \{1/\widetilde{C}_1, 1/\widetilde{C}_2\}\setminus\{1/C_\infty\}\,,
    \]
    and in such case, for $\widetilde{C}_j < C_\infty$, we have
    \[
        \pi \frac{N(\omega,1/\widetilde{C}_j-\omega^{-1})-N(\omega,1/\widetilde{C}_j)}{\sqrt{2\omega}} \underset{\omega\to+\infty}{\longrightarrow}
        \left\{
        \begin{aligned}
             &\widetilde{T}_j/\sqrt{\widetilde{C}_j} &\quad\text{if } C_1\neq C_2\,, \\
             &(\widetilde{T}_1+\widetilde{T}_2)/\sqrt{C_0} &\quad\text{if } C_1 = C_2\,.
        \end{aligned}
        \right.
    \]
\end{corollary}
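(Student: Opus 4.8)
\emph{Overall strategy.}
By Theorem~\ref{Thm_monotonicity_nPlus1_layers}, Proposition~\ref{nPlus1_layers_equivalence_kell_Love_waves} and the simplicity of the $k_\ell$'s (Corollary~\ref{simplicity_k}), for $y\in(1/C_\infty,1/C_0)$ one has $N(\omega,y)=\#\bigl(\mathcal Z(y)\cap(0,\omega]\bigr)$ where $\mathcal Z(y)$ is the set of positive zeros of $\omega\mapsto\tilde f_2(\omega,y)$; I also extend $N$ by the same combinatorial formula $N(\omega,y)=\#\{\ell:k_\ell(\omega)/\omega\geq y\}$, so that $N(\omega,y)=0$ for $y\geq1/C_0$. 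Introduce the continuous ``density'' $G$ with $G(y):=|\tilde\nu_1(y)|\widetilde T_1$ on $[1/\widetilde C_2,1/C_0)$, $G(y):=|\tilde\nu_1(y)|\widetilde T_1+|\tilde\nu_2(y)|\widetilde T_2$ on $[1/C_\infty,1/\widetilde C_2)$, and $G\equiv0$ on $[1/C_0,+\infty)$; Proposition~\ref{Prop_asymptotics_N_for_n_equal_2} reads $N(\omega,y)\sim\tfrac\omega\pi G(y)$. The plan is: (i) upgrade this to a remainder estimate $N(\omega,y)=\tfrac\omega\pi G(y)+O(1)$ with a constant that is \emph{uniform} in $y$, in particular as $y$ approaches a threshold; (ii) compute $G(y-\omega^{-1})-G(y)$, using that $y\mapsto|\tilde\nu_j(y)|=\sqrt{\widetilde C_j^{-2}-y^2}$ makes $G$ real-analytic on $[1/C_\infty,1/C_0)$ away from $y\in\{1/C_\infty,\,1/\widetilde C_2,\,1/C_0\}$, where the relevant $|\tilde\nu_j|$ vanishes like a square root.

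\emph{The value of the limit (granting (i)).}
If $y\in(1/C_\infty,1/C_0)$ with $y\neq1/\widetilde C_2$, then $G$ is $C^1$ on a left-neighbourhood of $y$, so $G(y-\omega^{-1})-G(y)=O(\omega^{-1})$ and, using the uniform remainder,
\[
\pi\,\frac{N(\omega,y-\omega^{-1})-N(\omega,y)}{\sqrt{2\omega}}=\frac{\omega\bigl(G(y-\omega^{-1})-G(y)\bigr)+\pi\,O(1)}{\sqrt{2\omega}}=O(\omega^{-1/2})\underset{\omega\to+\infty}{\longrightarrow}0 .
\]
Since $1/C_\infty$ is excluded in the statement and $1/\widetilde C_1=1/C_0\notin(1/C_\infty,1/C_0)$, this already gives the implication ``$y\notin\{1/\widetilde C_1,1/\widetilde C_2\}\setminus\{1/C_\infty\}\Rightarrow$ limit $=0$''. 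If $y=1/\widetilde C_j$ with $\widetilde C_j<C_\infty$ --- i.e.\ $y=1/\widetilde C_1=1/C_0$ (always, via the endpoint extension of $N$), or $y=1/\widetilde C_2$ when $C_0<\widetilde C_2<C_\infty$, which forces $C_1\neq C_2$ --- write $G=|\tilde\nu_j|\widetilde T_j+G_\sharp$ with $G_\sharp$ collecting the terms not carrying $\tilde\nu_j$, hence $C^1$ on the relevant side of $1/\widetilde C_j$; then
\[
G(1/\widetilde C_j-\omega^{-1})-G(1/\widetilde C_j)=|\tilde\nu_j(1/\widetilde C_j-\omega^{-1})|\,\widetilde T_j+O(\omega^{-1})=\widetilde T_j\sqrt{2/\widetilde C_j}\;\omega^{-1/2}+O(\omega^{-1}),
\]
so $\pi\bigl(N(\omega,1/\widetilde C_j-\omega^{-1})-N(\omega,1/\widetilde C_j)\bigr)/\sqrt{2\omega}\to\widetilde T_j\sqrt{2/\widetilde C_j}/\sqrt2=\widetilde T_j/\sqrt{\widetilde C_j}>0$. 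If $C_1=C_2$, then $\widetilde C_1=\widetilde C_2=C_0$ and the two square-root terms $|\tilde\nu_1|,|\tilde\nu_2|$ coincide and vanish together at $y=1/C_0$, producing instead $(\widetilde T_1+\widetilde T_2)/\sqrt{C_0}$. This yields the equivalence and the explicit formula.

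\emph{The main point --- a uniform remainder, and where the difficulty lies.}
Step (i) is the heart of the matter: near a threshold one partial density $|\tilde\nu_j(y)|\widetilde T_j$ tends to $0$, and the interval-counting in the proof of Proposition~\ref{Prop_asymptotics_N_for_n_equal_2} (super-intervals of length $\pi/(|\tilde\nu_j(y)|\widetilde T_j)\to\infty$) only gives a remainder $O(\sqrt\omega)$, which is the same size as the quantity we must evaluate. I would instead count $\mathcal Z(y)$ directly, refining the reduction used in the proof of Proposition~\ref{Prop_zeros_ftilde_any_y_2Plus1_layers}\emph{(iii)}: on the range where $\bar\nu_1,\bar\nu_2$ are both purely imaginary (write $M=|\bar\nu_1|T_1$, $m=|\bar\nu_2|T_2$), the elements of $\mathcal Z(y)$ are the zeros of $F_y(\omega):=\mu_1|\bar\nu_1|\tan[\omega M]-\rho_y(\omega)$, where $\rho_y(\omega):=\bigl(\mu_3\bar\nu_3-\mu_2|\bar\nu_2|\tan[\omega m]\bigr)/\bigl(1+\tfrac{\mu_3\bar\nu_3}{\mu_2|\bar\nu_2|}\tan[\omega m]\bigr)$, together with the points where both $1+\tfrac{\mu_3\bar\nu_3}{\mu_2|\bar\nu_2|}\tan[\omega m]=0$ and $\cos[\omega M]=0$. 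One checks that $\rho_y$ is continuous and strictly decreasing between consecutive poles (the poles of $\tan[\omega m]$ being removable for $\rho_y$), hence $F_y$ is continuous and strictly increasing between consecutive elements of its discontinuity set $\mathrm{Disc}:=\{\cos[\omega M]=0\}\cup\{\rho_y=\infty\}$ and jumps from $+\infty$ to $-\infty$ across each of them; consequently $F_y$ has exactly one zero in each gap, while at a point of $\mathrm{Disc}$ lying in both sets $\tilde f_2$ itself vanishes. Since $\{\cos[\omega M]=0\}$ and $\{\rho_y=\infty\}$ are equispaced with gaps $\pi/M$ and $\pi/m$, this gives $N(\omega,y)=\#(\{\cos[\omega M]=0\}\cap(0,\omega])+\#(\{\rho_y=\infty\}\cap(0,\omega])+O(1)=\tfrac\omega\pi\bigl(M(y)+m(y)\bigr)+O(1)=\tfrac\omega\pi G(y)+O(1)$ with an \emph{absolute} constant (the coincidences cancel: each removes a gap but contributes a zero at itself); the two size-orderings $M\gtrless m$ are handled identically. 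On the complementary range $y\geq1/\widetilde C_2$ a single oscillation is present, and the uniform form $N(\omega,y)=\tfrac\omega\pi|\tilde\nu_1(y)|\widetilde T_1+O(1)$ follows directly from Proposition~\ref{Prop_zeros_ftilde_any_y_2Plus1_layers}\emph{(i)}--\emph{(ii)} (one zero per interval of length $\pi/(|\tilde\nu_1(y)|\widetilde T_1)$, and these intervals are equispaced). This is precisely the uniform input required by the computation above.
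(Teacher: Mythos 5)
Your proof is correct and follows the same route as the paper's: feed the two-term Weyl asymptotics of Proposition~\ref{Prop_asymptotics_N_for_n_equal_2} into the difference quotient and Taylor-expand $|\tilde{\nu}_j(y-\omega^{-1})|$, the square-root vanishing at $y=1/\widetilde{C}_j$ producing the $\omega^{-1/2}$ scale that survives division by $\sqrt{2\omega}$, while the $C^1$ behaviour away from the thresholds gives a limit equal to $0$; your case distinctions and the resulting values $\widetilde{T}_j/\sqrt{\widetilde{C}_j}$ and $(\widetilde{T}_1+\widetilde{T}_2)/\sqrt{C_0}$ coincide with the paper's. Where you genuinely add something is your step (i): the paper's proof subtracts two asymptotic equivalences, each carrying an unquantified $o(\omega)$ error, and divides by $\sqrt{2\omega}$, which the bare statement of Proposition~\ref{Prop_asymptotics_N_for_n_equal_2} does not license. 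You correctly identify this as the crux and supply the missing uniform remainder $N(\omega,y)=\frac{\omega}{\pi}G(y)+O(1)$ by counting the discontinuity set of $\mu_1|\bar{\nu}_1|\tan[\omega M]-\rho_y(\omega)$ (monotone between consecutive discontinuities, one jump from $+\infty$ to $-\infty$ per gap, coincidences of the two equispaced families compensated by a zero of $\tilde{f}_2$ at the coincidence point). This is in the spirit of the interval-by-interval counts in the proofs of Propositions~\ref{Prop_zeros_ftilde_any_y_2Plus1_layers} and~\ref{Prop_asymptotics_N_for_n_equal_2}, which do in fact yield bounded end corrections, but you state and use the uniformity in $y$ that the corollary actually requires; in that respect your write-up is more complete than the paper's.
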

This corollary means that the branches~$k_\ell(\omega)/\omega$ ``accumulate'' from below at the~$1/C_j$'s, in the sense that the number of branches below and close to~$1/C_j$ is diverging with~$\omega$, at speed~$\sqrt\omega$.
\begin{proof}
    First, at $y=1/C_\infty$, $N(\omega, y-\omega^{-1})=0$ for $\omega>0$, hence
    \[
        \pi \frac{N(\omega,y-\omega^{-1})-N(\omega,y)}{\sqrt{2\omega}} \leq 0\,.
    \]
    
    From now on, we suppose $y\in(1/C_\infty, 1/C_0)$. We have
    \[
        |\tilde{\nu}_j(y-\omega^{-1})| = \left\{
        \begin{aligned}
            &|\tilde{\nu}_j(y)| + \frac{y \omega^{-1}}{|\tilde{\nu}_j(y)|} + O\!\left(\omega^{-2}\right) &\text{if } y < \widetilde{C}_j^{-1}\,, \\
            &\sqrt{\frac{2}{\omega \widetilde{C}_j}} \left(1 - \frac{\omega^{-1}}{4\widetilde{C}_j} + O\!\left(\omega^{-2}\right)\right) &\text{if } y = \widetilde{C}_j^{-1}\,.
        \end{aligned}
        \right.
    \]
    
    Recall that at $y=1/C_0=1/\widetilde{C}_1$, we have $N(\omega,1/C_0)=0$. Therefore,
    \[
        \pi \frac{N\left(\omega,\frac{1}{C_0}-\frac{1}{\omega} \right)-N(\omega,\frac{1}{C_0})}{\sqrt{2\omega}} \sim
        \left\{
        \begin{aligned}
            &\sqrt{\frac{\omega}{2}} \left|\tilde{\nu}_1\!\left(\frac{1}{C_0}-\frac{1}{\omega}\right)\right| \widetilde{T}_1 \to \frac{\widetilde{T}_1}{\sqrt{C_0}} &\text{if } \widetilde{C}_2>C_0\,, \\
            &\sqrt{\frac{\omega}{2}} \left|\tilde{\nu}_1\!\left(\frac{1}{C_0}-\frac{1}{\omega}\right)\right| \left( \widetilde{T}_1 + \widetilde{T}_2 \right) \to \frac{\widetilde{T}_1 + \widetilde{T}_2}{\sqrt{C_0}} &\text{if } \widetilde{C}_2=C_0\,.
        \end{aligned}
        \right.
    \]
    
    At $y=1/\widetilde{C}_2<1/C_0$, we have
    \begin{multline*}
        \pi \frac{N(\omega,\widetilde{C}_2^{-1}-\omega^{-1})-N(\omega,\widetilde{C}_2^{-1})}{\sqrt{2\omega}} \\
        \sim \sqrt{\frac{\omega}{2}} \left( |\tilde{\nu}_1(\widetilde{C}_2^{-1}-\omega^{-1})| \widetilde{T}_1 + |\tilde{\nu}_2(\widetilde{C}_2^{-1}-\omega^{-1})| \widetilde{T}_2 - |\tilde{\nu}_1(\widetilde{C}_2^{-1})| \widetilde{T}_1 \right) \\
        = \frac{\widetilde{T}_2}{\sqrt{\widetilde{C}_2}} +  \frac{\omega^{-1/2}}{\sqrt{2}\widetilde{C}_2 |\tilde{\nu}_1(\widetilde{C}_2^{-1})|} \widetilde{T}_1 + O(\omega^{-1}) \to \frac{\widetilde{T}_2}{\sqrt{\widetilde{C}_2}} \,.
    \end{multline*}
    
    For $1/C_\infty<y<1/\widetilde{C}_2$, we have
    \begin{multline*}
        \pi \frac{N(\omega,y-\omega^{-1})-N(\omega,y)}{\sqrt{2\omega}} \\
        \sim \sqrt{\frac{\omega}{2}} \left( \left( |\tilde{\nu}_1(y-\omega^{-1})| - |\tilde{\nu}_1(y)| \right) \widetilde{T}_1 + \left( |\tilde{\nu}_2(y-\omega^{-1})| - |\tilde{\nu}_2(y)| \right) \widetilde{T}_2 \right) \\
        = \frac{y}{\sqrt{2\omega}} \left( \frac{\widetilde{T}_1}{|\tilde{\nu}_1(y)|} + \frac{\widetilde{T}_2}{|\tilde{\nu}_2(y)|} + O\!\left(\omega^{-1}\right) \right) \to 0\,.
    \end{multline*}
    
    For $1/\widetilde{C}_2<y<1/C_0$, we have
    \begin{multline*}
        \pi \frac{N(\omega,y-\omega^{-1})-N(\omega,y)}{\sqrt{2\omega}} \sim \sqrt{\frac{\omega}{2}} \left( |\tilde{\nu}_1(y-\omega^{-1})| - |\tilde{\nu}_1(y)| \right) \widetilde{T}_1 \\
        = \frac{y}{\sqrt{2\omega}} \left( \frac{\widetilde{T}_1}{|\tilde{\nu}_1(y)|} + O\!\left(\omega^{-1}\right) \right) \to 0\,.
    \end{multline*}
\end{proof}

In order to recover the values we are looking for, one has to plot the experimental data into a graph $(\omega, k(\omega)/\omega)$ ---see top-right figure in Figure~\ref{Fig_simu_1_6Plus1_layers_intro} for a simulated version of such data---, then to proceed as follow.

First, following Corollaries~\ref{Cor_recover_C0_Cinfty_nPlus1_layers} and~\ref{Cor_recover_C1_C2_2Plus1_layers}, one reads in the plot the three values~$1/\widetilde{C}_j$. Corollary~\ref{Cor_recover_C1_C2_2Plus1_layers} ensures that the levels $y$ of ``accumualtion'' of branches, when $\omega$ becomes large, that one reads on the plot, are the values $1/\widetilde{C}_2$ and $1/\widetilde{C}_1$ and only them: the plot contains no other levels of ``accumualtion''. In particular, if there is only one level of ``accumualtion'' (which is then necessarily at the top of the plot), then $C_1=C_2$.

Second, one retrieves $\widetilde{T}_1$ and $\widetilde{T}_2$ by evaluating the limits in Corollary~\ref{Cor_recover_C1_C2_2Plus1_layers}. In the special case $C_1=C_2$, we recover instead the sum $\widetilde{T}_1 + \widetilde{T}_2$.

Third, if we have three different values for the $C_j$'s (i.e., if $C_1\neq C_2$), we identify which layer is at the surface and which one is below it (and above the semi-infinite layer). To do so, we use (see the proof of Proposition~\ref{Prop_zeros_ftilde_any_y_2Plus1_layers}) that on the one hand if $C_1>C_2$, then the zeros at $y=1/\widetilde{C}_2=1/C_1$ are equidistant ---by $\pi/(|\bar{\nu}_2(1/C_1)| T_2)$--- while on the other hand if $C_1>C_2$, then the zeros at $y=1/\widetilde{C}_2=1/C_2$ are not equidistant (but their spacing tends to $\pi/(|\bar{\nu}_1(1/C_2)| T_1)$ from above).

Summarizing, we have identified $C_1$, $C_2$, $C_3$, $T_1$, and $T_2$ (or $T_1+T_2$ if $C_1=C_2<C_3$).

\bigskip

\noindent{\bf Acknowledgments.} J.R. thanks Mathieu Lewin for fruitful discussions. MVdH gratefully acknowledges support from the Simons Foundation under the MATH + X program, the National Science Foundation under grant DMS-1815143, and the corporate members of the Geo-Mathematical Imaging Group at Rice University. J.G. and J.R. acknowledge support from the Agence Nationale de la Recherche under Grant No. ANR-19-CE46-0007 (project ICCI). The authors thank the referees for the suggestions that helped improving the manuscript.

\newpage
\section*{Appendix}
\addtocontents{toc}{\protect\setcounter{tocdepth}{1}}
\renewcommand{\thesubsection}{\Alph{subsection}}
\makeatletter
	\renewcommand{\thetheorem}{\thesubsection.\arabic{theorem}}% Update counter printing
	\@addtoreset{theorem}{subsection}% Reset theorem counter with every new subsection
	
	\renewcommand{\theequation}{\thesubsection.\arabic{equation}}% Update counter printing
	\@addtoreset{equation}{subsection}% Reset theorem counter with every new subsection
	
	\renewcommand{\thefigure}{\thesubsection}% Update counter printing
	\@addtoreset{figure}{subsection}% Reset theorem counter with every new subsection
\makeatother

\subsection{Numerical simulations}\label{Appendix_simulations}
We present here numerical simulations of $k_\ell(\omega)/\omega$.

\vspace{-0.2cm}
\begin{figure}[!htp]
    \centering
    \captionsetup{width=1.2\textwidth,justification=centering,font=small}
    \captionsetup[subfigure]{labelformat=empty,skip=0pt,font=tiny}
    \makebox[\textwidth][c]{%
        \subcaptionbox{\underline{$(1+1)$-layers}\\[2pt]
        $(C_1, C_2) = (1000,10000) \qquad \qquad H_2 = 100$}
{\includegraphics[width=0.59\textwidth]{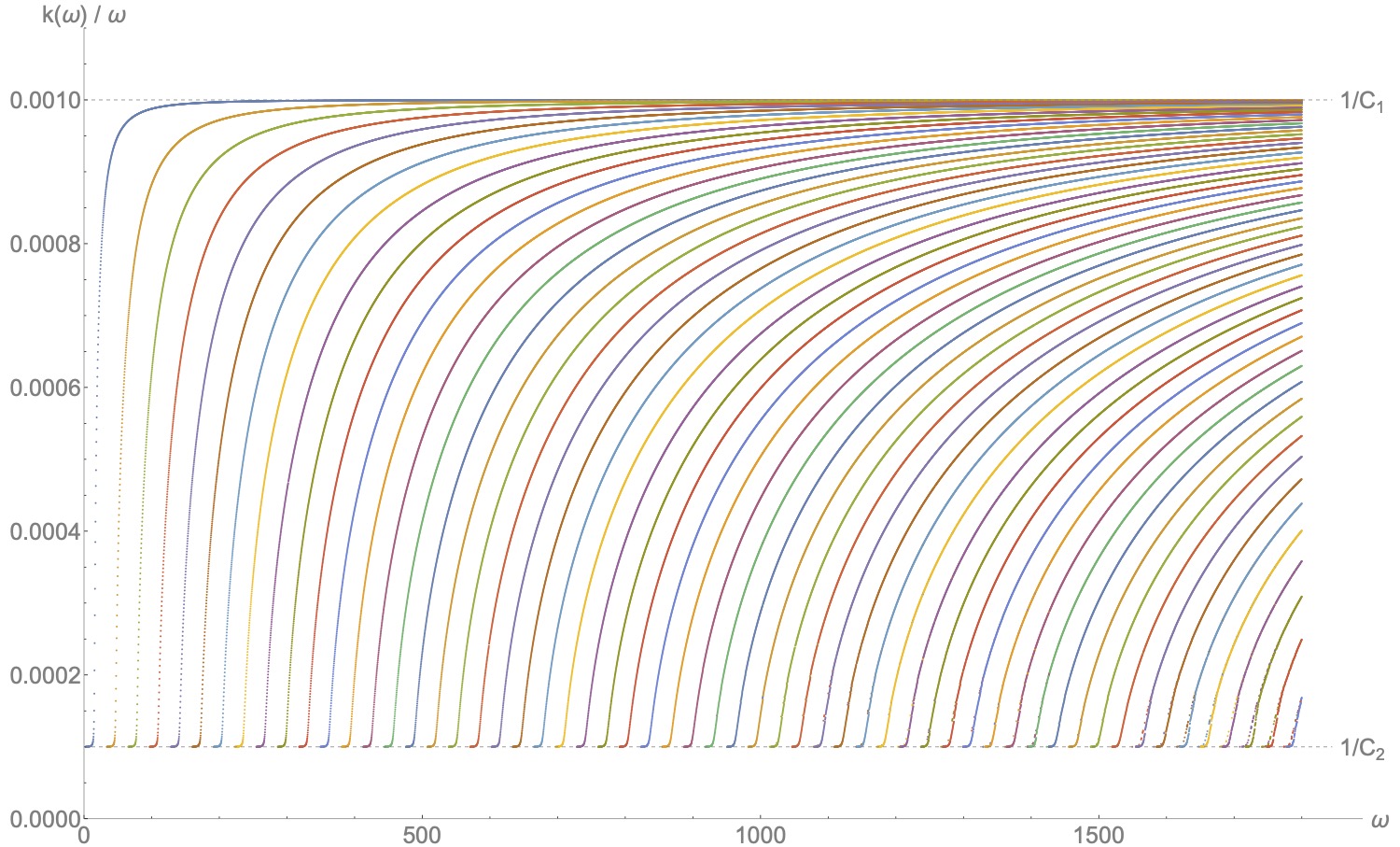}}
        \hspace{1em}
        \subcaptionbox{\underline{$(2+1)$-layers}\\[2pt]
        $(C_1, C_2, C_3) = (1000,1818,10000) \qquad \qquad (H_2, H_3) = (100, 200)$}
{\includegraphics[width=0.59\textwidth]{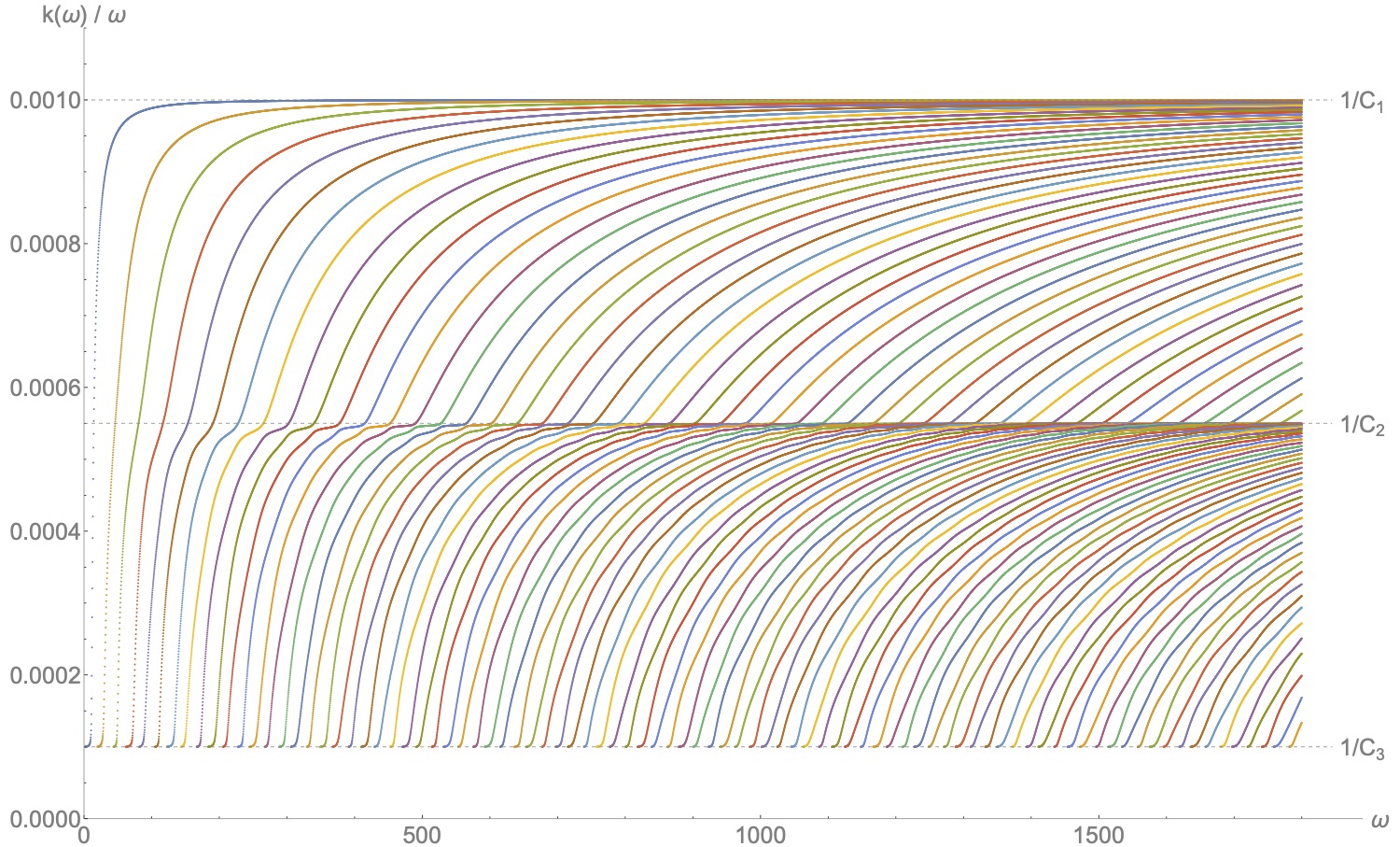}}%
    }
        
    \makebox[\textwidth][c]{%
        \subcaptionbox{\underline{$(3+1)$-layers}\\[2pt]
        $(C_1, C_2, C_3, C_4) = (1000,1429,2500,10000)$\\
        $(H_2, H_3, H_4) = (100, 200, 300)$}
{\includegraphics[width=0.59\textwidth]{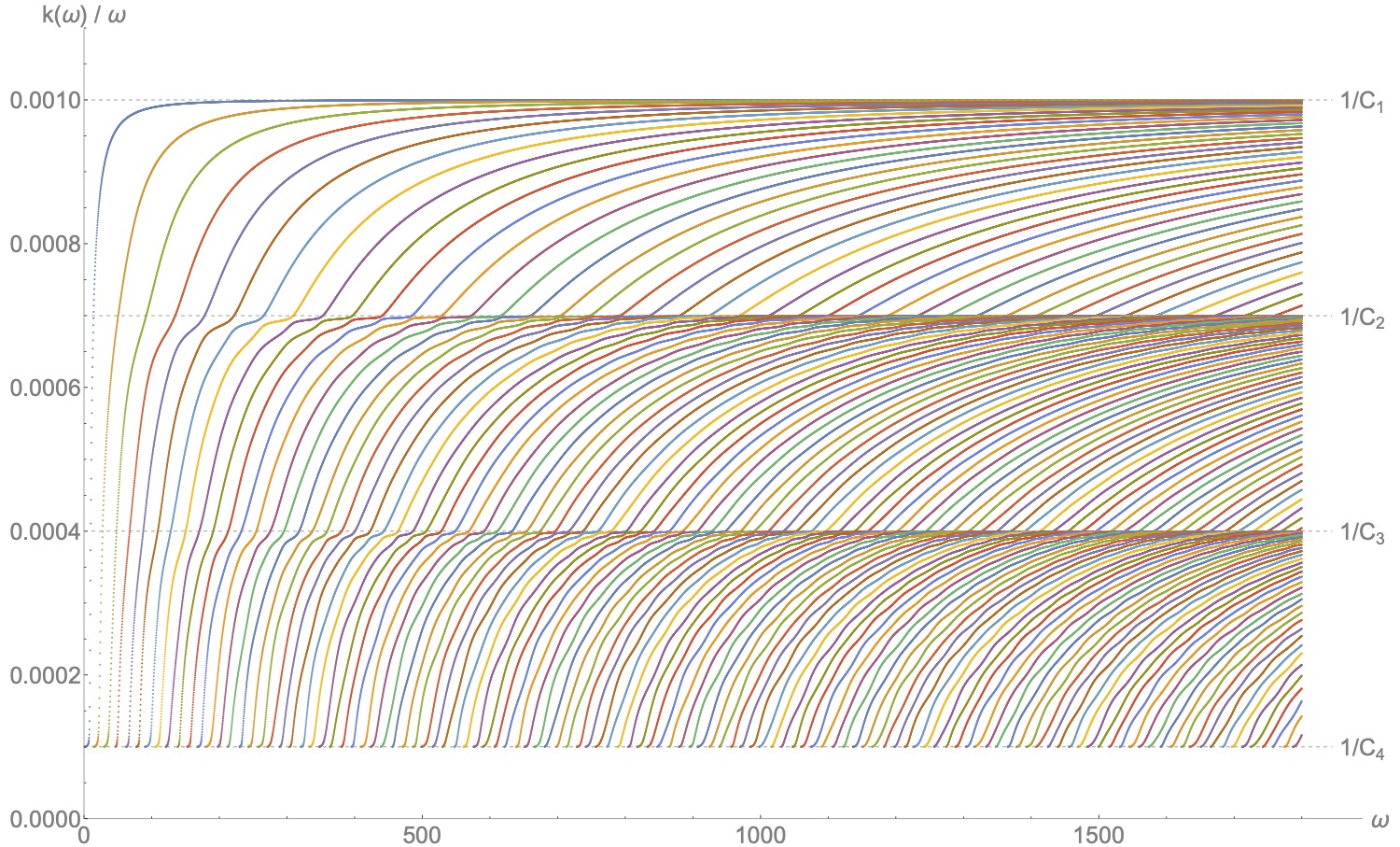}}%
        \hspace{1em}
        \subcaptionbox{\underline{$(4+1)$-layers}\\[2pt]
        $(C_1, C_2, C_3, C_4, C_5) = (1000,1290,1818,3077,10000)$\\
        $(H_2, H_3, H_4, H_5)=(100, 200, 300, 400)$}
{\includegraphics[width=0.59\textwidth]{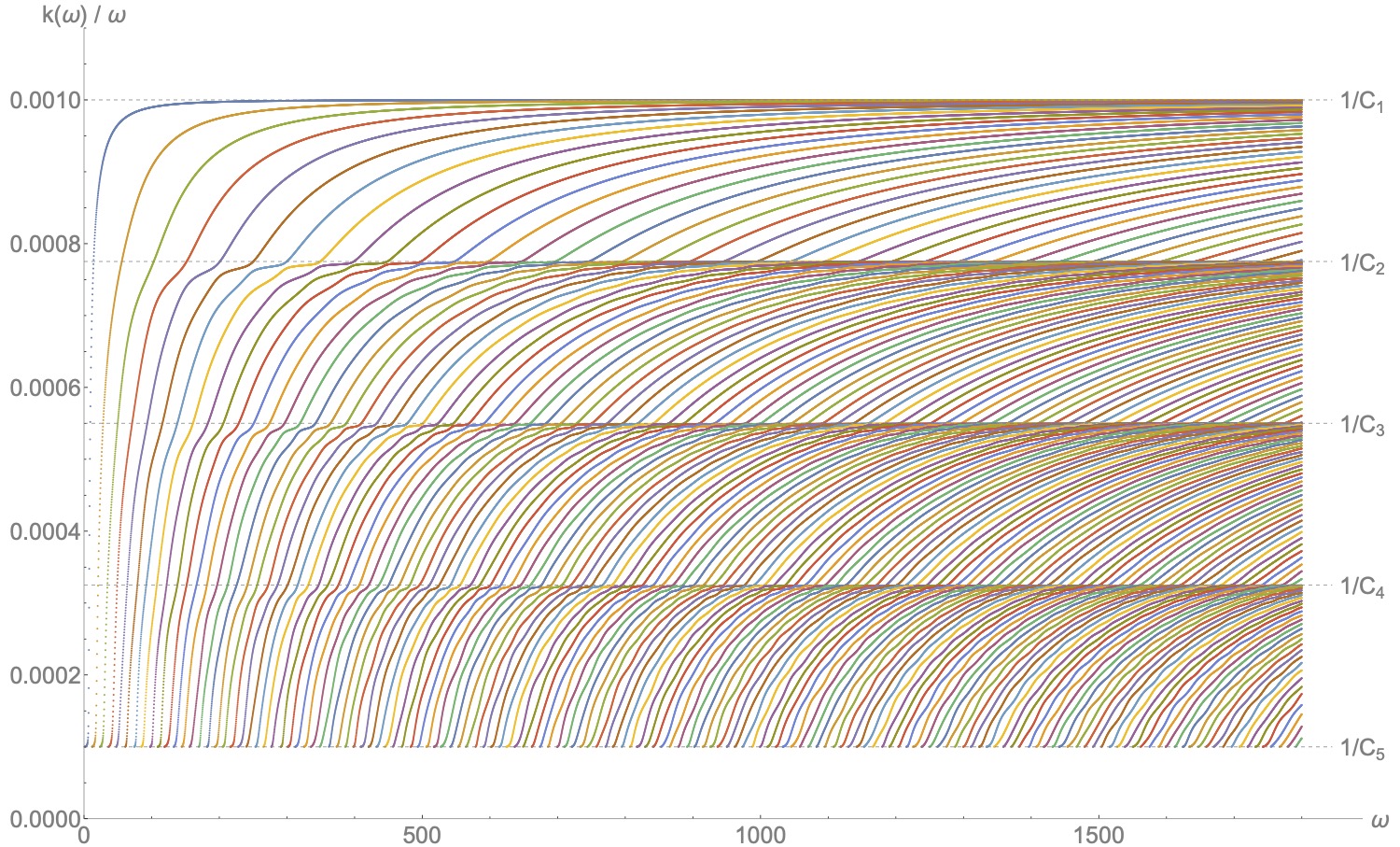}}%
    }
        
    \makebox[\textwidth][c]{%
        \subcaptionbox{\underline{$(5+1)$-layers}\\[2pt]
        $(C_1, C_2, C_3, C_4, C_5, C_6) = (1000,1220,1562,2174,3571,10000)$\\
        $(H_2, H_3, H_4, H_5, H_6)=(100, 200, 300, 400, 500)$}
{\includegraphics[width=0.59\textwidth]{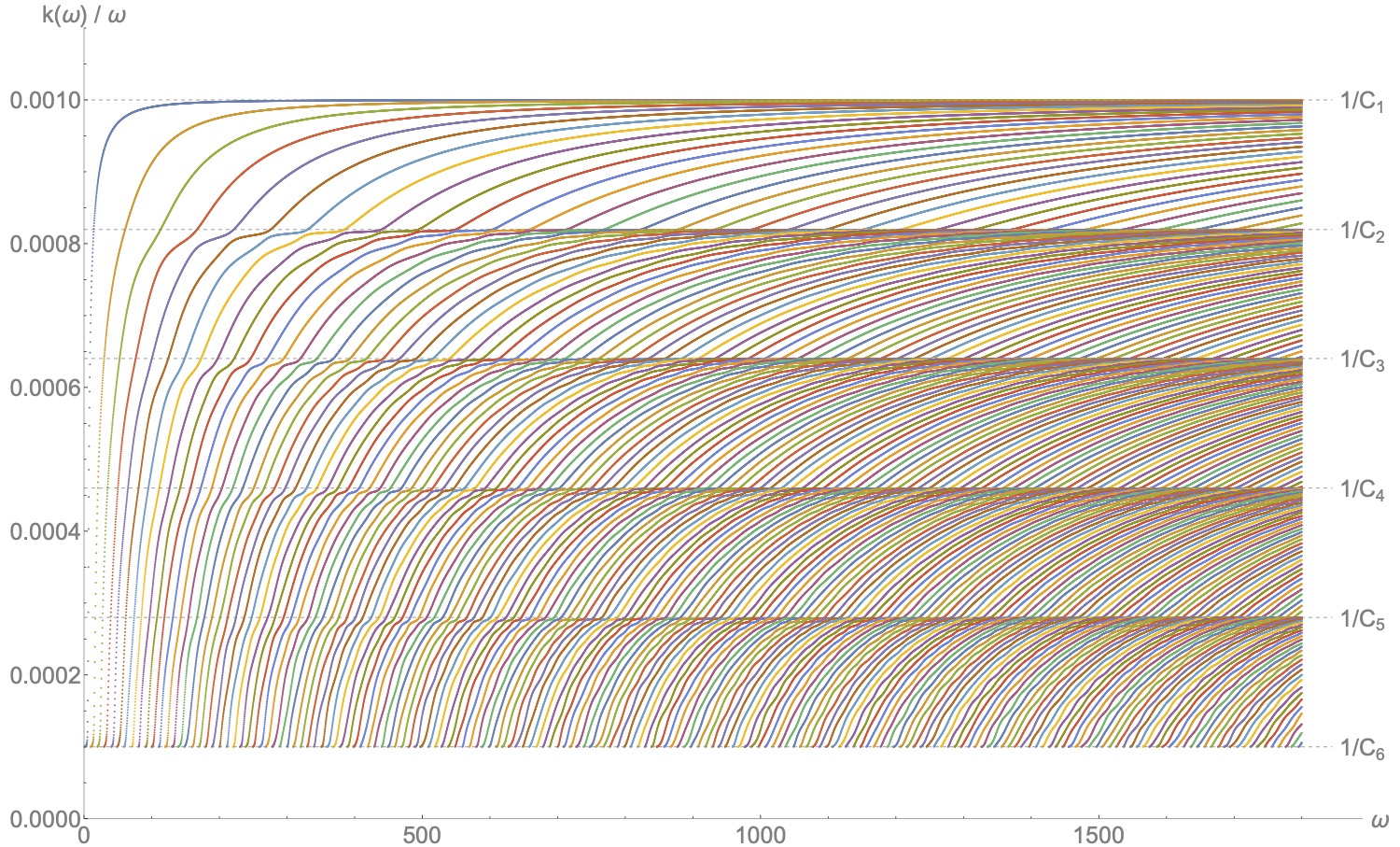}}%
        \hspace{1em}
        \subcaptionbox{\underline{$(6+1)$-layers}\\[2pt]
        $(C_1, C_2, C_3, C_4, C_5, C_6, C_7) = (1000,1176,1429,1818,2500,4000,10000)$\\
        $(H_2, H_3, H_4, H_5, H_6, H_7)=(100, 200, 300, 400, 500, 600)$}
{\includegraphics[width=0.59\textwidth]{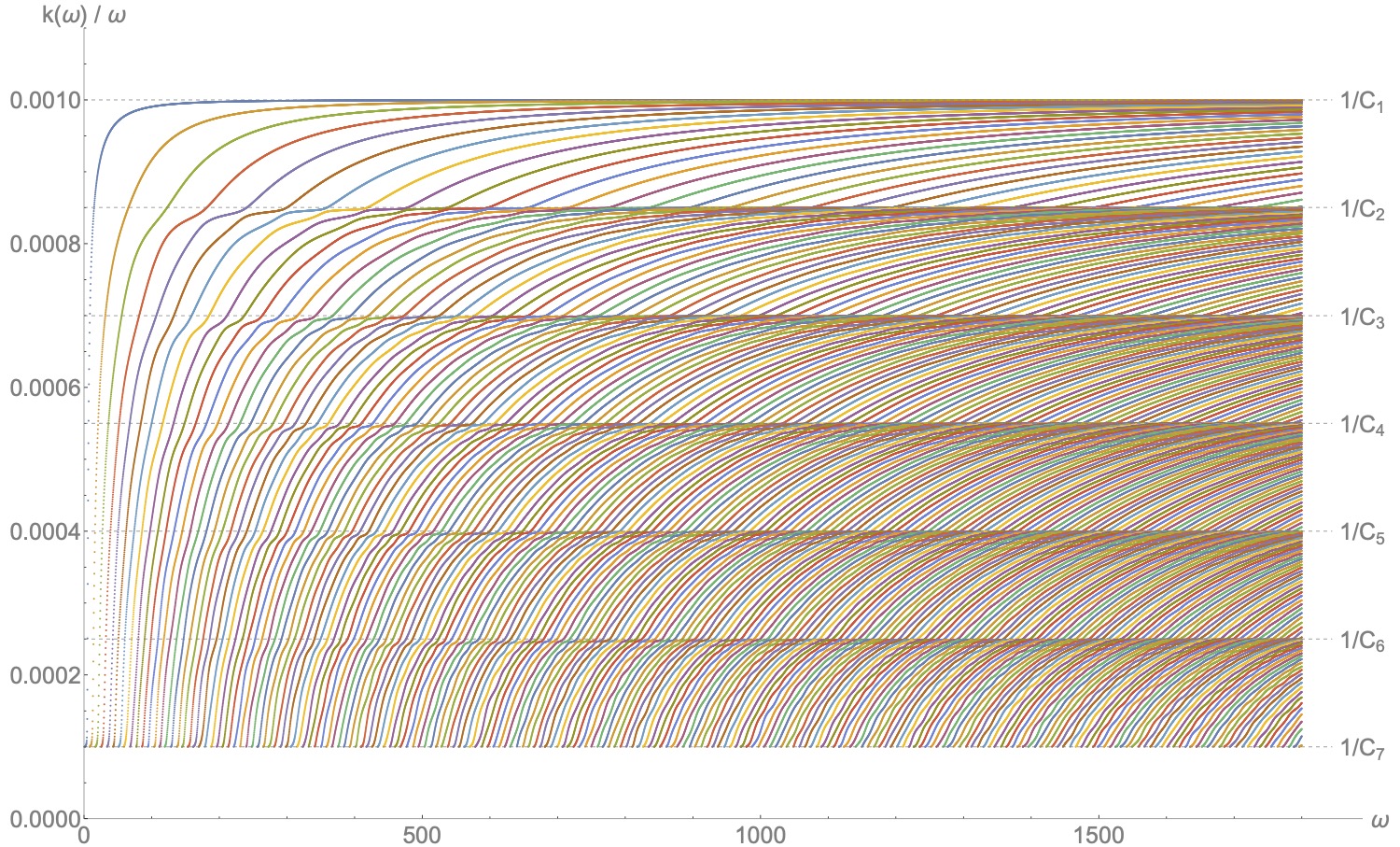}}%
    }

    \caption{Numerical computations of $\omega\mapsto \left\{k_\ell(\omega)/\omega\right\}_\ell$, from $1+1$ to $6+1$~layers.\\
    For $\omega\in(0,1800]$, increments of $0.25$, and $\rho_i=1$, $i\geq1$.
    }
    \label{Fig_simu_1_6Plus1_layers_intro}
\end{figure}

\vspace{-0.2cm}
Note that, at least for the case $(2+1)$-layers, i.e., $n=2$, similar simulations can be found in the literature, even though with less details. See e.g., in~\cite[Fig~1(a)]{BucBen-96}.
\begin{remark*}
    Contrarily to what the above figures could mislead to think, the first branch does not (necessarily) ``starts'' at~$(0,1/C_{n+1})$. As shown in Section~\ref{Section_1plus1_layers}, this is true for $n=1$, but this is generically false for $n\geq2$, for which it starts at~$(\omega_1,1/C_{n+1})$ with $\omega_1\geq0$.
\end{remark*}

\subsection{Derivation of the Love waves boundary value problem}\label{Derivation_Love_waves_problem}
We consider and present first the linear elastic wave equation (without source term) in $\R^2\times [0,+\infty)$, see e.g.,~\cite{AkiRic}. In coordinates $(\bx,z)$, $\bx=(x_1,x_2) \in \R^2$, $z=x_3\in [0,+\infty)$, we consider solutions $\bu=(u_1,u_2,u_3)$ satisfying stress-free (Neumann) boundary condition at the surface $z=0$,
\begin{equation}\label{div_equation}
    \rho \partial_{tt} \bu = \div \btau(\bu) \qquad \text{and} \qquad \btau(\bu)\bigr|_{z=0}\cdot \be_3 = 0 \,,
\end{equation}
where $\rho(\bx,z)$ is the mass density, $\bu(\bx,z,t) = (u_1(\bx,z,t), u_2(\bx,z,t), u_3(\bx,z,t))$ the displacement vector, and $\btau(\bu) = (\tau_{ij})_{1\leq i,j \leq 3}$ the Cauchy stress tensor given by Hookes’ law
\[
    \btau(\bu) = \bC \beps(\bu)\,,
\]
in which $\bC(\bx,z) = (C_{i j k \ell}(\bx,z))_{1\leq i,j,k,\ell \leq 3}$ is the stiffness tensor and $\beps = (\varepsilon_{ij})_{1\leq i,j \leq 3}$ the infinitesimal strain tensor, which is given by the strain–displacement equation
\[
    \beps(\bu) = \frac{\bnabla\bu + {\bnabla\bu}\transp}2\,.
\]
Equivalently, in terms matrices' coefficients we have
\[
    \tau_{ij}(\bu) = \sum\limits_{k,\ell=1}^3 C_{i j k \ell} \varepsilon_{k\ell}(\bu) \quad \text{ with } \quad \varepsilon_{k\ell}(\bu) = \frac{\partial_{x_k} u_\ell + \partial_{x_\ell} u_k}2\,.
\]
Note that, in particular, $\beps$ is symmetric: $\varepsilon_{ij}=\varepsilon_{ji}$ for all $i,j\in \llbracket 1, 3 \rrbracket$.

Our first but physically natural assumption is that $\bC$ is symmetric:
\[
    \bC_{i j k \ell} = \bC_{j i k \ell}=\bC_{k \ell i j}\,, \qquad \text{for all } i,j\in \llbracket 1, 3 \rrbracket\,.
\]
The $i$-th component of the elastic wave equation~\eqref{div_equation} therefore reads
\begin{equation}\label{div_equation_C_symm_ith_component}
    \rho \partial_{tt} u_i = \left( \div \btau(\bu) \right)_i = \sum_{j=1}^3 \partial_{x_j} \sum_{k,\ell=1}^3 C_{i j k \ell} \frac{\partial_{x_k} u_\ell + \partial_{x_\ell} u_k}{2} = \sum_{j, k,\ell=1}^3 \partial_{x_j} C_{i j k \ell} \partial_{x_k} u_\ell
\end{equation}
and the one of the stress-free boundary condition reads
\[
    \sum\limits_{k,\ell=1}^3 C_{i 3 k \ell} \partial_{x_k} u_\ell\biggr|_{z=0} = 0\,.
\]

We now introduce our main assumption. Namely, we assume that the medium is a time-independent, stratified medium that is homogeneous in the $(x,y)$-plane: $\rho$ and $\bC$ depend only on the variable $z$. This allows us to write the elastic wave equation~\eqref{div_equation_C_symm_ith_component} as
\[
    \rho \partial_{tt} u_i = \sum_{\ell=1}^3 \left[ \partial_z C_{i 3 3 \ell} \partial_z + \sum_{j=1}^2 C_{i j 3 \ell} \partial_{x_j} \partial_z + \sum_{k=1}^2 \partial_z C_{i 3 k \ell} \partial_{x_k} + \sum_{j, k=1}^2 C_{i j k \ell} \partial_{x_j} \partial_{x_k} \right] u_\ell\,.
\]

Introducing the time and $(x,y)$-space Fourier transform
\[
    \hat{u}_i(z) \equiv \hat{u}_i(\bxi, z, \omega) := \int_{\R^2} \int_{\R} u_i(\bx, z, t) e^{\ii \omega t} e^{\ii \bxi\cdot\bx} \di t \di\bx
\]
(and assuming that everything is well-defined), we formally obtain
\begin{equation}\label{div_equation_C_symm_only_z_ith_component_Fourier}
    -\omega^2 \rho \hat{u}_i = \sum_{\ell=1}^3 \left[ \partial_z C_{i 3 3 \ell} \partial_z + \sum_{j=1}^2 \ii \xi_j C_{i j 3 \ell} \partial_z + \sum_{k=1}^2 \ii \xi_k \partial_z C_{i 3 k \ell} \partial_{x_k} - \sum_{j, k=1}^2 \xi_k \xi_j C_{i j k \ell} \right] \hat{u}_\ell
\end{equation}
and the one of the stress-free boundary condition reads
\begin{equation}\label{div_Neumann_boundary_C_symm_only_z_ith_component_Fourier}
    \sum\limits_{\ell=1}^3 \left\{ \ii \sum\limits_{k=1}^2 C_{i 3 k \ell} \xi_k \hat{u}_\ell + C_{i 3 3 \ell} \partial_z \hat{u}_\ell \right\}\biggr|_{z=0} = 0\,.
\end{equation}

In the case of isotropic media, the stiffness tensor $\bC$ takes the form~\cite{AkiRic}
\begin{equation}\label{isotropic_stiffness}
    C_{i j k \ell} = \lambda \delta_i^j \delta_k^\ell + \mu \left( \delta_i^k \delta_j^\ell + \delta_i^\ell \delta_j^k \right) \, ,
\end{equation}
where $\lambda$ and $\mu$ are the Lam{\'e} parameters. Injecting~\eqref{isotropic_stiffness} in~\eqref{div_equation_C_symm_only_z_ith_component_Fourier}--\eqref{div_Neumann_boundary_C_symm_only_z_ith_component_Fourier}, we obtain
\[
    \left\{%
    \begin{aligned}
        0 &= \left( ( \lambda + \mu ) \xi_1^2 + \mu |\bxi|^2 - \left( \partial_z \mu \partial_z + \rho \omega^2 \right) \right) \hat{u}_1 + ( \lambda + \mu ) \xi_1 \xi_2 \hat{u}_2 - \ii \left( \lambda \partial_z + \partial_z \mu \right) \xi_1 \hat{u}_3\,, \\
        0 &= ( \lambda + \mu ) \xi_1 \xi_2 \hat{u}_1 + \left( ( \lambda + \mu ) \xi_2^2 + \mu |\bxi|^2 - \left( \partial_z \mu \partial_z + \rho \omega^2 \right) \right) \hat{u}_2 - \ii \left( \lambda \partial_z + \partial_z \mu \right) \xi_2 \hat{u}_3\,, \\
        0 &= - \ii \left( \mu \partial_z + \partial_z \lambda \right) \xi_1 \hat{u}_1 - \ii \left( \mu \partial_z + \partial_z \lambda \right) \xi_2 \hat{u}_2 + \left( \mu |\bxi|^2 - \left( \partial_z \left( \lambda + 2 \mu \right) \partial_z + \rho \omega^2 \right) \right) \hat{u}_3\,,
    \end{aligned}
    \right.
\]
that is,\\[4pt]
\makebox[\textwidth]{%
    \resizebox{0.95\textwidth}{!}{%
    $\begin{pmatrix}
        ( \lambda + \mu ) \xi_1^2 + \mu |\bxi|^2 - \left( \partial_z \mu \partial_z + \rho \omega^2 \right) & ( \lambda + \mu ) \xi_1 \xi_2 & - \ii \left( \lambda \partial_z + \partial_z \mu \right) \xi_1 \\
        ( \lambda + \mu ) \xi_1 \xi_2 & ( \lambda + \mu ) \xi_2^2 + \mu |\bxi|^2 - \left( \partial_z \mu \partial_z + \rho \omega^2 \right) & - \ii \left( \lambda \partial_z + \partial_z \mu \right) \xi_2 \\
        - \ii \left( \mu \partial_z + \partial_z \lambda \right) \xi_1 & - \ii \left( \mu \partial_z + \partial_z \lambda \right) \xi_2 & \mu |\bxi|^2 - \left( \partial_z \left( \lambda + 2 \mu \right) \partial_z + \rho \omega^2 \right)
    \end{pmatrix}
    \begin{pmatrix}
        \hat{u}_1 \\
        \hat{u}_2 \\
        \hat{u}_3
    \end{pmatrix}
    =
    \begin{pmatrix}
        0 \\
        0 \\
        0
    \end{pmatrix}$%
    }%
}\\[4pt]
and
\[
    \left\{
        \begin{aligned}
            \ii \xi_1 \hat{u}_3(0) + \partial_z \hat{u}_1(0) &= 0\,, \\
            \ii \xi_2 \hat{u}_3(0) + \partial_z \hat{u}_2(0) &= 0\,, \\
            \ii \lambda(0) \left( \xi_1 \hat{u}_1(0) + \xi_2 \hat{u}_2(0) \right) + \left( \lambda(0)+ 2 \mu(0) \right) \partial_z \hat{u}_3(0) &= 0\,.
        \end{aligned}
    \right.
\]

Introducing the orthogonal matrix
\[
    P(\bxi) :=
    \begin{pmatrix}
        \xi_2/|\bxi| & -\xi_1/|\bxi| & 0 \\
        \xi_1/|\bxi| & \xi_2/|\bxi| & 0 \\
        0 & 0 & 1
    \end{pmatrix}
\]
and $\phi := (\phi_1, \phi_2, \phi_3)\transp := P(\bxi) (\hat{u}_1, \hat{u}_2, \hat{u}_3)\transp$, we obtain the equation
\[%
    \makebox[\textwidth]{%
    \resizebox{0.95\textwidth}{!}{%
    $\begin{pmatrix}
        -\partial_z \mu \partial_z + \mu |\bxi|^2 - \rho \omega^2  & 0 & 0 \\
        0 & -\partial_z \mu \partial_z + (\lambda + 2\mu) |\bxi|^2 - \rho \omega^2 & -\ii |\bxi| \left( \lambda \partial_z + \partial_z \mu \right) \\
        0 & -\ii |\bxi| \left( \mu \partial_z + \partial_z \lambda \right) & -\partial_z (\lambda + 2\mu) \partial_z + \mu |\bxi|^2 - \rho \omega^2
    \end{pmatrix}
    \begin{pmatrix}
        \phi_1 \\
        \phi_2 \\
        \phi_3
    \end{pmatrix}
    =
    \begin{pmatrix}
        0 \\
        0 \\
        0
    \end{pmatrix}$%
    }%
    }%
\]
with the boundary condition
\[
    \left\{
        \begin{aligned}
            \partial_z \phi_1(0) &= 0\,, \\
            \ii |\bxi| \phi_3(0) + \partial_z \phi_2(0) &= 0\,, \\
            \ii \lambda(0) |\bxi| \phi_2(0) + \left( \lambda(0)+ 2 \mu(0) \right) \partial_z \phi_3(0) &= 0\,.
        \end{aligned}
    \right.
\]
In this decoupled system, the component $\phi_1$ corresponds to Love waves and $(\phi_2, \phi_3)$ to Rayleigh waves. Defining the wavenumber $k:=|\bxi|$, we have derived the boundary value problem~\eqref{Problem_equations_intro}.

\begin{remark*}
    In~\cite{deHIanNakZha-17}, the equation ---(5.2) in the paper---
    \[
        -\partial_z \hat\mu \partial_z \phi  + \hat\mu |\bxi|^2 \phi = \Lambda \phi
    \]
    is obtained for Love waves in an isotropic medium, where $\hat\mu=\mu/\rho$ and $\Lambda=\omega^2$. Our equation in~\eqref{Problem_equations_intro} differs by the presence of $\rho$ multiplying $\Lambda=\omega^2$ because we started from the true linear elastic wave equation while an approximated version of it (but equivalent from the semiclassical point of view) is considered in~\cite[Sect. 2]{deHIanNakZha-17}.
\end{remark*}

Finally, the continuity condition on the solution means that $\hat{\bu}$, hence $\phi$, is $z$-continuous, while the continuity condition on the stress components $xz$ and $yz$ means here that 
\[
    \btau_{1 3}(\bu) = \mu \left( \ii \xi_1 \hat{u}_3 + \partial_z \hat{u}_1 \right) \qquad \text{ and } \qquad \btau_{2 3}(\bu) = \mu \left( \ii \xi_2 \hat{u}_3 + \partial_z \hat{u}_2 \right)
\]
are continuous and, consequently, that $\phi_1 = \xi_2 \hat{u}_1 - \xi_1 \hat{u}_2$ satisfies $\mu \partial_z\phi_1$ continuous.

\subsection{Postponed proofs in the general case}\label{Appendix_nplus1_layers}
\begin{proof}[Proof of Lemma~\ref{Continuity_tilde_functions_nPlus1_layers}]
    We prove it for $\tilde{P}_m$ and $\tilde{Q}_m$, then the result for $\tilde{f}_n$ is an immediate consequence.
    
    First, $\tilde{P}_0 \equiv 1$ and $\tilde{Q}_0 \equiv 0$ on $[0,+\infty)\times[1/C_\infty, 1/C_0)$, and they are obviously continuous.
    
    Now, with the $\bar{M}_m$'s defined in~\eqref{Def_M_bar}, for which on $[0,+\infty)\times[1/C_\infty, 1/C_0)$ we have
    \[
        \begin{pmatrix} \tilde{P}_m(\omega,y) \\ \tilde{Q}_m(\omega,y) \end{pmatrix} =
        \bar{M}_m(\omega,y)
        \begin{pmatrix} \tilde{P}_{m-1}(\omega,y) \\ \tilde{Q}_{m-1}(\omega,y) \end{pmatrix}, \qquad \forall\, m\in\llbracket1,n\rrbracket\,,
    \]
    we see that the $\bar{M}_m$'s are continuous on $(0,+\infty)\times[1/C_\infty, 1/C_0)$, as $\tilde{P}_0$ and $\tilde{Q}_0$ are, and an induction immediately gives the claimed continuity on this domain. We are left with proving the continuity at $(0,y_0)$, $y_0\in[1/C_\infty, 1/C_0)$, which we also do by induction.
    
    The result holds for $m=0$ as explained earlier. Assume now that $\tilde{P}_{m-1}$ and $\tilde{Q}_{m-1}$, $m\in\llbracket1,n\rrbracket$, are continuous at $(0,y_0)$. 
    Then, $(\tilde{P}_m(0,y_0),\tilde{Q}_m(0,y_0))=(1,0)$ and, writing
    \begin{align*}
        \begin{pmatrix} \tilde{P}_m(\omega,y) \\ \tilde{Q}_m(\omega,y) \end{pmatrix} - \begin{pmatrix} 1 \\ 0 \end{pmatrix} =
        \bar{M}_m(\omega,y)
        \left( \begin{pmatrix} \tilde{P}_{m-1}(\omega,y) \\ \tilde{Q}_{m-1}(\omega,y) \end{pmatrix}  - \begin{pmatrix} 1 \\ 0 \end{pmatrix} \right) + \left(\bar{M}_m(\omega,y) - I_2 \right) \begin{pmatrix} 1 \\ 0 \end{pmatrix},
    \end{align*}
    we obtain the wanted result as $(\omega,y)\to(0,y_0)$, since
    \begin{multline*}
        \norm{\begin{pmatrix} \tilde{P}_m(\omega,y) - 1 \\ \tilde{Q}_m(\omega,y) \end{pmatrix}}_\infty \leq
        \norm{\bar{M}_m(\omega,y)}_\infty
        \norm{\begin{pmatrix} \tilde{P}_{m-1}(\omega,y) - 1 \\ \tilde{Q}_{m-1}(\omega,y) \end{pmatrix}}_\infty \\
        + \norm{\begin{pmatrix} \cosh[\omega\bar{\nu}_m(y) T_m] - 1 \\ \mu_m \bar{\nu}_m(y) \sinh[\omega\bar{\nu}_m(y) T_m] \end{pmatrix}}_\infty
    \end{multline*}
    and $\norm{\bar{M}_m(\omega,y)}_\infty$ is uniformly bounded on any neighborhood of $(0,y_0)$.
\end{proof}
\begin{remark*}
    Notice that even though the $\bar{M}_m$'s are not continuous since
    \[
        \bar{M}_m(0,1/C_m) = I_2 \neq
        \begin{pmatrix}
            1 & T_m / \mu_m \\
            0 & 1
        \end{pmatrix} = \bar{M}_m(\omega,1/C_m)\,, \qquad \forall\,\omega>0\,,
    \]
    the $\tilde{P}_m$'s and the $\tilde{Q}_m$'s are continuous.
\end{remark*}

\begin{proof}[Proof of Lemma~\ref{Equivalence_zeros_f_n_and_tilde_f_n}]
    Since the zeros of $f_n$ are in $(0,+\infty)\times[\omega / C_\infty, \omega / C_0)$ by Lemma~\ref{Alt_Def_kell_nPlus1_layers} and $\tilde{f}_n(\omega, y) = \omega^{-1} f_n(\omega, \omega y)$ on $(0,+\infty)\times[ 1 / C_\infty, 1 / C_0)$ by definition of $\tilde{f}_n$, we have
    \[
        \left\{ (\omega,y): (\omega,\omega y) \in \ker f_n \right\} = \ker \tilde{f}_n \cap (0,+\infty)\times[ 1 / C_\infty, 1 / C_0)\,.
    \]
    
    Since $\ker \tilde{f}_n \subset [0,+\infty)\times[ 1 / C_\infty, 1 / C_0)$ by definition of $\tilde{f}_n$, we are left with proving that
    \[
        \ker \tilde{f}_n \cap \{0\}\times[ 1 / C_\infty, 1 / C_0) = \{(0,1/C_\infty)\} \,.
    \]
    By definitions, $\tilde{P}_n(0,y)=1$ and $\tilde{Q}_n(0,y)=0$ hence $\tilde{f}_n(0,y)=\mu_\infty \bar{\nu}_\infty(y) = 0$. Hence, $\tilde{f}_n(0,y) > 0$ if $y>1/C_\infty$ and $\tilde{f}_n(0,1/C_\infty) = 0$, since $\bar{\nu}_\infty(y) > 0$ if $y>1/C_\infty$ and $\bar{\nu}_\infty(1/C_\infty)=0$. This concludes the proof.
\end{proof}

\subsection{Supplementary results for the simple square well}\label{Appendix_1plus1_layers}

We recall that $\nu_1=\nu_0$ and $\nu_2=\nu_\infty$ are defined in~\eqref{Def_nu}.
More precisely, $\nu_1(\omega, k)\in \R_+ \cup i\R_-$ and, by Section~\ref{Section_introduction}, for a Love wave to exist at~$(\omega, k)$ we must have~$\nu_2(\omega, k)>0$.

\subsubsection{Alternative formula giving \texorpdfstring{$H$}{H}}
\begin{proposition}
    Let $n=1$ and $\ell\geq1$. The function
    \begin{align*}
        \left( \omega_\ell, +\infty \right) &\to \left( (\ell-1)\frac{\pi}{H}, (\ell-1)\frac{\pi}{H} + \frac{\pi}{2H} \right) \\
        \omega &\mapsto \omega Y_\ell(\omega) = |\nu_1(\omega, k_\ell(\omega))| = \sqrt{\frac{\omega^2}{C_1^2} - k_\ell(\omega)^2}\,.
    \end{align*}
    is smooth, bijective, increasing. Consequently,
    \[
        \forall\, \ell\,,\, H = \lim\limits_{\omega\to+\infty} \frac{\pi}{ \sqrt{\frac{\omega^2}{C_1^2} - k_{\ell+1}(\omega)^2} - \sqrt{\frac{\omega^2}{C_1^2} - k_\ell(\omega)^2} }
        \,.
    \]
\end{proposition}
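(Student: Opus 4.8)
The plan is to derive everything from the explicit formula \eqref{Def_f_1_at_its_zeros} and from the results already established for the simple square well. Recall that if $(\omega, k_\ell(\omega))$ solves the dispersion relation, then $\nu_1(\omega,k_\ell(\omega)) \in i\R_-\setminus\{0\}$ and $\nu_2(\omega,k_\ell(\omega)) \geq 0$, so that $\omega Y_\ell(\omega) = |\nu_1(\omega,k_\ell(\omega))| = \omega\sqrt{C_1^{-2} - (k_\ell(\omega)/\omega)^2}$ is well-defined and positive; moreover by \eqref{Def_f_1_at_its_zeros} the quantity $\omega Y_\ell(\omega)$ must be a point where $\tan$ is nonnegative, i.e.\ $\omega Y_\ell(\omega) \in \bigcup_{p\geq0}[p\pi, (p+\tfrac12)\pi)$, and the analysis of $\psi_1,\psi_2$ in Section~\ref{Section_1plus1_layers} (see also the interval containing $k_\ell(\omega)^2/\omega^2$ quoted in the recovery subsection) pins $\omega Y_\ell(\omega)$ precisely to the branch $[(\ell-1)\pi, (\ell-1)\pi + \tfrac\pi2)$. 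This gives the claimed target interval once surjectivity onto it is shown.

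First I would establish smoothness: $\omega \mapsto k_\ell(\omega)/\omega$ is smooth on $(\omega_\ell,+\infty)$ by Proposition~\ref{Proposition_1plus1layer} (equivalently $Y_\ell$ is smooth, as shown there by bootstrapping the IFT identity), hence $\omega \mapsto \omega Y_\ell(\omega)$ is smooth as a product of smooth functions. Next, monotonicity: differentiating, $\frac{\di}{\di\omega}(\omega Y_\ell(\omega)) = Y_\ell(\omega) + \omega Y_\ell'(\omega)$. One route is purely algebraic from the IFT formula for $Y_\ell'$ displayed in the proof of Proposition~\ref{Proposition_1plus1layer}, but a cleaner route uses the identity $\omega^2 Y_\ell(\omega)^2 = \omega^2/C_1^2 - k_\ell(\omega)^2$: differentiating gives $\omega Y_\ell (Y_\ell + \omega Y_\ell') = \omega/C_1^2 - k_\ell \partial_\omega k_\ell$, and since $\partial_\omega k_\ell(\omega) = \omega\norm{\sqrt{\rho}\phi_{\ell,\omega}}_2^2 / (k_\ell \norm{\sqrt{\mu}\phi_{\ell,\omega}}_2^2)$ from the proof of Theorem~\ref{Thm_monotonicity_nPlus1_layers} (Step 1), one gets
\[
    \omega Y_\ell \frac{\di}{\di\omega}(\omega Y_\ell) = \frac{\omega}{C_1^2} - \frac{\omega\norm{\sqrt{\rho}\phi_{\ell,\omega}}_2^2}{\norm{\sqrt{\mu}\phi_{\ell,\omega}}_2^2}\,.
\]
Since $\phi_{\ell,\omega}$ is supported across the full half-space but $\mu \geq \mu_1 = \rho_1 C_1^2$ wherever $\rho = \rho_1$ and $\mu/\rho = C^2 \geq C_1^2$ everywhere (as $C_1 = C_0 = \min C$), we have $\norm{\sqrt{\mu}\phi_{\ell,\omega}}_2^2 \geq C_1^2 \norm{\sqrt{\rho}\phi_{\ell,\omega}}_2^2$, so the right-hand side is nonnegative; it is in fact strictly positive because $\phi_{\ell,\omega}$ is nonzero on the semi-infinite layer where $C = C_2 > C_1$. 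Hence $\frac{\di}{\di\omega}(\omega Y_\ell) > 0$.

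For bijectivity onto $((\ell-1)\pi/H,\ (\ell-1)\pi/H + \pi/(2H))$, it remains to compute the one-sided limits. As $\omega \searrow \omega_\ell$: when $\ell \geq 2$ we have $k_\ell(\omega_\ell)/\omega_\ell = 1/C_2$ by Proposition~\ref{Proposition_1plus1layer}, so $\omega_\ell Y_\ell(\omega_\ell) = \omega_\ell\sqrt{C_1^{-2} - C_2^{-2}} = (\ell-1)\pi/H$ using the explicit value $\omega_\ell = (\ell-1)\frac{C_1C_2}{\sqrt{C_2^2-C_1^2}}\frac{\pi}{H}$; when $\ell = 1$, $\omega_1 = 0$ and $k_1(\omega)/\omega \to 1/C_2$ as $\omega\searrow0$ (again Proposition~\ref{Proposition_1plus1layer}), so $\omega Y_1(\omega) \to 0 = (\ell-1)\pi/H$. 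As $\omega \to +\infty$: $k_\ell(\omega)/\omega \to 1/C_1$ monotonically from below (Corollary~\ref{Cor_recover_C0_Cinfty_nPlus1_layers}), so one might fear $\omega Y_\ell(\omega) \to \omega\cdot 0^+$, an indeterminate product; the resolution is the dispersion relation \eqref{Def_f_1_at_its_zeros}, which forces $\tan[\omega Y_\ell(\omega) H] = \frac{\mu_2}{\mu_1}\sqrt{(\,(k_\ell/\omega)^2 - C_2^{-2})/(C_1^{-2} - (k_\ell/\omega)^2)} \to +\infty$, hence $\omega Y_\ell(\omega) H \nearrow (\ell-1)\pi + \pi/2$, i.e.\ $\omega Y_\ell(\omega) \to (\ell-1)\pi/H + \pi/(2H)$. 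Continuity plus strict monotonicity then yield the bijection onto the open interval, and the telescoping formula for $H$ follows because $\omega Y_{\ell+1}(\omega) - \omega Y_\ell(\omega) \to \pi/H$ as $\omega\to+\infty$. The main obstacle is not any single step but making the $\omega\to+\infty$ limit rigorous: one must resist reading it off the indeterminate form $\omega\cdot\sqrt{C_1^{-2}-(k_\ell/\omega)^2}$ and instead extract the rate from the dispersion relation (equivalently, show $\cos[\omega Y_\ell(\omega)H] \to 0$ while $\sin \to \pm1$), which is exactly what \eqref{Def_f_1_at_its_zeros} provides.
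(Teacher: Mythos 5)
Your proof is correct, and it reaches the same endpoints as the paper's but by a partly different route. The containment $\omega Y_\ell(\omega)H\in\left((\ell-1)\pi,(\ell-1)\pi+\tfrac{\pi}{2}\right)$ is obtained essentially as in the paper (sign of $\tan$ at zeros of the dispersion relation, plus the limit $\omega Y_\ell(\omega)H\to(\ell-1)\pi$ as $\omega\searrow\omega_\ell$ computed from the explicit $\omega_\ell$). Where you diverge is the monotonicity: the paper reruns the implicit function theorem on $g_\ell(\omega,Z)=\mu_1 Z\sin Z-\mu_2 r(\omega,Z)\cos Z$ and reads off the sign of $h'(\omega)$ from $\tan[\omega Y_\ell(\omega)H]>0$, whereas you differentiate $\omega^2Y_\ell^2=\omega^2/C_1^2-k_\ell^2$ and plug in the variational formula $\partial_\omega k_\ell=\omega\normNS{\sqrt{\rho}\phi_{\ell,\omega}}_2^2/(k_\ell\normNS{\sqrt{\mu}\phi_{\ell,\omega}}_2^2)$ from Step~1 of the proof of Theorem~\ref{Thm_monotonicity_nPlus1_layers}, reducing positivity of the derivative to the pointwise inequality $\mu=\rho C^2\geq\rho C_1^2$ (strict in $L^2$ because $\phi_{\ell,\omega}$ cannot vanish identically on the semi-infinite layer, by ODE uniqueness). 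This buys a softer, coordinate-free argument that would generalize beyond $n=1$, at the cost of invoking the eigenvalue machinery rather than staying within the explicit $n=1$ computation. You also make explicit a step the paper leaves implicit: the limit $\omega Y_\ell(\omega)H\nearrow(\ell-1)\pi+\tfrac{\pi}{2}$ as $\omega\to+\infty$, which you correctly extract from the divergence of the right-hand side of the dispersion relation $\tan[\omega Y_\ell(\omega)H]=\tfrac{\mu_2}{\mu_1}\psi_1\!\left((k_\ell/\omega)^2\right)$ as $k_\ell(\omega)/\omega\to1/C_1$; this is exactly what is needed to pass from monotone boundedness of each $\omega Y_\ell(\omega)$ to the telescoped limit $\omega\left(Y_{\ell+1}(\omega)-Y_\ell(\omega)\right)\to\pi/H$, so your treatment of the ``Consequently'' part is, if anything, more complete than the paper's.
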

\begin{proof}
    Since $\sin[ \omega Y_\ell(\omega) H ] \cos[ \omega Y_\ell(\omega) H ] > 0$ by~\eqref{nPlus1_layers_relation_defining_Love_waves}, which reads
    \[
        \mu_1 |\nu_1(\omega, k)| \sin[ |\nu_1(\omega, k)| H ] = \mu_2 \nu_2(\omega, k) \cos[ |\nu_1(\omega, k)| H ] \quad \text{ and } \quad \omega/C_2 < k < \omega/C_1
    \]
    for $n=1$, and since $\omega\mapsto\omega Y_\ell(\omega)$ is continuous by Proposition~\ref{Proposition_1plus1layer}, for any $\ell$ there exists a $p$ such that $H \omega Y_\ell(\omega)\in (p\pi, p\pi + \pi/2)$ for $\omega>\omega_\ell$.
    From the formula of $\omega_\ell$ and the fact that $\lim_{\omega\searrow\omega_\ell} k_\ell(\omega)/\omega = 1/C_2$ by Proposition~\ref{Proposition_1plus1layer}, we obtain $\lim_{\omega\searrow\omega_\ell} \omega Y_\ell(\omega) H = (\ell-1)\pi$, hence $p=\ell-1$ and
    \begin{equation}\label{1Plus1_layers_proof_altern_formula_H_interval_var_of_trigo}
        \forall\, \ell\,, \quad \forall\, \omega>\omega_\ell\,, \quad \omega Y_\ell(\omega) H \in \left( (\ell-1)\pi, (\ell-1)\pi + \frac{\pi}{2} \right).
    \end{equation}
    We are therefore left with proving that the function is strictly increasing, which we do again by the IFT. Let us define, as before for shortness,
    \[
        r(\omega,Z) := \sqrt{\frac{C_2^2-C_1^2}{C_1^2 C_2^2} \omega^2 H^2 - Z^2} >0
    \]
    on $(\omega_\ell,+\infty) \times \left( (\ell-1)\pi, (\ell-1)\pi + \pi/2 \right)$, and $g_\ell:(\omega_\ell,+\infty) \times \left( (\ell-1)\pi, (\ell-1)\pi + \pi/2 \right)$ by
    \[
        g_\ell(\omega,Z) = \mu_1 Z \sin Z - \mu_2 r(\omega,Z) \cos Z\,.
    \]
    For any $\omega_\star>\omega_\ell$, $g_\ell(\omega_\star,\omega_\star Y_\ell(\omega_\star) H)=0$ and we have
    \[
        \frac{\di g_\ell}{\di Y}(\omega,Z) = \left[ \mu_1 + \mu_2 r(\omega,Z) \right] \sin Z + \left[ \mu_1 + \frac{\mu_2}{r(\omega,Z)} \right] \cos Z\,,
    \]
    hence $\frac{\di g_\ell}{\di Y}(\omega_\star,\omega_\star Y_\ell(\omega_\star) H) \neq 0$ again since $\sin[ \omega_\star Y_\ell(\omega_\star) H ] \cos[ \omega_\star Y_\ell(\omega_\star) H ] > 0$.
    
    Therefore, by the IFT there exists a neighborhood $U$ of $\{\omega_\star\}$ s.t.\ there exists a unique $h\in C^1(U,\R)$ with $h(\omega_\star)=\omega_\star Y_\ell(\omega_\star) H$ and $g_\ell(\omega,h(\omega))=0$ on $U$. This $h$ is (on $U$) exactly $\omega\mapsto\omega Y_\ell(\omega) H$ by definition of~$k_\ell$. Moreover, on $U$,
    \begin{align*}
        h'(\omega) &= - \left(\frac{\di g_\ell}{\di Y}(\omega,h(\omega))\right)^{-1} \frac{\di g_\ell}{\di \omega}(\omega,h(\omega)) \\
        &= \frac{ \rho_2 (C_2^2-C_1^2)  H^2 \omega }{ C_1^2 r(\omega,h(\omega)) \left( \left[ \mu_1 + \frac{\mu_2}{r(\omega,h(\omega))} \right] + \left[ \mu_1 + \mu_2 r(\omega,h(\omega)) \right] \tan [ h(\omega) ] \right) }\,,
    \end{align*}
    hence $h'(\omega_\star) > 0$, again since $\tan [\omega_\star Y_\ell(\omega_\star) H]>0$, concluding the proof of the first claim.
    Moreover, by a bootstrapping argument, we obtain that $h$ is smooth.
    
    The second claim is a direct consequence of the first result.
\end{proof}

\subsubsection{On the behaviour of the $k_\ell(\omega)$'s when $\omega\searrow\omega_\ell$}
\begin{proposition}
    Let $n=1$, the $k_\ell(\omega)$'s be defined in Definition~\ref{Def_kell_nPlus1_layers}, and the $\omega_\ell$'s as in Proposition~\ref{Proposition_1plus1layer}. Define $y_\ell:(\omega_\ell,+\infty) \to (1/C_2, 1/C_1)$ by $y_\ell(\omega) = k_\ell(\omega)/\omega$. Then,
    \[
        \lim\limits_{\omega\searrow\omega_\ell} y_\ell'(\omega) = 0\,.
    \]
\end{proposition}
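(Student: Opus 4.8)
The statement asserts that $y_\ell'(\omega)\to 0$ as $\omega\searrow\omega_\ell$. My plan is to reuse the explicit IFT formula for $Y_\ell'(\omega)$ obtained in the proof of Proposition~\ref{Proposition_1plus1layer}, relate it to $y_\ell'(\omega)$, and then track the behaviour of each factor as $\omega\searrow\omega_\ell$. Recall $Y_\ell(\omega)=\sqrt{1/C_1^2-y_\ell(\omega)^2}$, so differentiating gives $Y_\ell Y_\ell'=-y_\ell y_\ell'$, hence $y_\ell'(\omega)=-\tfrac{Y_\ell(\omega)}{y_\ell(\omega)}Y_\ell'(\omega)$. Since $y_\ell(\omega)\to 1/C_2>0$ and $Y_\ell(\omega)\to\sqrt{1/C_1^2-1/C_2^2}>0$ as $\omega\searrow\omega_\ell$ (by Proposition~\ref{Proposition_1plus1layer}), it suffices to show $Y_\ell'(\omega)\to 0$.

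**Key steps.** First I would recall the formula from the proof of Proposition~\ref{Proposition_1plus1layer}:
\[
    Y_\ell'(\omega) = - H Y_\ell \frac{Y_\ell \cos[ H \omega Y_\ell ] + \frac{\mu_2}{\mu_1} s(Y_\ell) \sin[ H \omega Y_\ell ]}{\left[ 1 + \frac{\mu_2}{\mu_1} H \omega s(Y_\ell) \right] \sin[ H \omega Y_\ell ] + H \omega Y_\ell \left[ 1 + \frac{\mu_2}{\mu_1} \frac{1}{H \omega s(Y_\ell)} \right] \cos[ H \omega Y_\ell ]}\,,
\]
where $s(Y)=\sqrt{(C_2^2-C_1^2)/(C_1^2C_2^2)-Y^2}$. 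The crucial observation is the limiting behaviour of the trigonometric arguments: as $\omega\searrow\omega_\ell$ we have $y_\ell(\omega)\to 1/C_2$, hence $\nu_2(\omega,k_\ell(\omega))\to 0$, hence $s(Y_\ell(\omega))\to 0$ (indeed $s(Y_\ell)=|\bar\nu_2(y_\ell)|\cdot\text{const}$ up to the change of variables, or directly $s(Y_\ell)^2=1/C_1^2-1/C_2^2-(1/C_1^2-y_\ell^2)=y_\ell^2-1/C_2^2\to 0$); and $H\omega Y_\ell\to (\ell-1)\pi$ by the computation of $\omega_\ell$ in Proposition~\ref{Proposition_1plus1layer} (or by~\eqref{1Plus1_layers_proof_altern_formula_H_interval_var_of_trigo}), so $\cos[H\omega Y_\ell]\to\pm 1\neq 0$ and $\sin[H\omega Y_\ell]\to 0$. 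Plugging these in: the numerator tends to $Y_\ell\cos[H\omega Y_\ell]\to\pm\sqrt{1/C_1^2-1/C_2^2}$, a finite nonzero constant (times $-HY_\ell$), while in the denominator the term $H\omega Y_\ell\cdot\frac{\mu_2}{\mu_1}\frac{1}{H\omega s(Y_\ell)}\cos[H\omega Y_\ell]=\frac{\mu_2}{\mu_1}\frac{Y_\ell\cos[H\omega Y_\ell]}{s(Y_\ell)}$ blows up since $s(Y_\ell)\to 0$ while the cosine stays bounded away from zero. The other terms in the denominator stay bounded. Hence the denominator $\to\pm\infty$, the numerator stays bounded, so $Y_\ell'(\omega)\to 0$, and consequently $y_\ell'(\omega)\to 0$.

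**Main obstacle.** The only delicate point is justifying that the denominator genuinely diverges rather than that there is some cancellation: one must check the sign of $\sin[H\omega Y_\ell]$ near $\omega_\ell$ matches that of $\cos[H\omega Y_\ell]$ (which it does, by~\eqref{Def_f_1_at_its_zeros} / the dispersion relation, so the two bounded terms in the denominator add with the same sign and cannot cancel the blow-up term of the same sign — in fact by~\eqref{1Plus1_layers_proof_altern_formula_H_interval_var_of_trigo}, $H\omega Y_\ell\in((\ell-1)\pi,(\ell-1)\pi+\pi/2)$ so $\sin$ and $\cos$ are both positive and the blow-up term $\frac{\mu_2}{\mu_1}\frac{Y_\ell\cos[H\omega Y_\ell]}{s(Y_\ell)}$ is positive and dominant). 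A clean way to present this is to factor $s(Y_\ell)$ out of the denominator, writing $Y_\ell'(\omega)=-HY_\ell\, s(Y_\ell)\cdot\frac{\text{(bounded)}}{\text{(bounded, bounded away from }0)}$, making the $\to 0$ transparent; I would phrase the proof around this rescaling rather than a term-by-term limit argument to avoid having to argue non-cancellation in the denominator at all.
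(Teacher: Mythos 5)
Your proof is correct and follows essentially the same route as the paper: both arguments amount to implicitly differentiating the dispersion relation and observing that the coefficient of the derivative blows up like $1/\bar\nu_2$ (your $1/s(Y_\ell)$) as $\omega\searrow\omega_\ell$ while the remaining terms stay bounded, forcing the derivative to vanish. The paper differentiates $\tilde f_1(\omega,y_\ell(\omega))=0$ directly in the variable $y$ and multiplies through by $\bar\nu_2$, whereas you reuse the IFT formula for $Y_\ell'$ in the variable $Y=|\bar\nu_1(y)|$ from the proof of Proposition~\ref{Proposition_1plus1layer} and pass back via $y_\ell'=-(Y_\ell/y_\ell)Y_\ell'$; this is the same computation in a different parametrization, and your final remark about factoring $s(Y_\ell)$ out of the denominator is exactly the paper's multiplication by $\bar\nu_2$.
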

\begin{proof}
    By definition, $\tilde{f}_1(\omega,y_\ell(\omega))=0$. Differentiating $\omega \mapsto\tilde{f}_1(\omega,y_\ell(\omega))$, we obtain
    \begin{multline*}
        \begin{multlined}[t][0.9\textwidth]
            y_\ell'(\omega) y_\ell(\omega) \left[ \left( \mu_2 H \omega \bar\nu_2(y_\ell(\omega)) - \mu_1 \right)\frac{\sin[ \omega |\bar\nu_1(y_\ell(\omega))| H ]}{|\bar{\nu}_1(y_\ell(\omega))|} \right.\\
            \left. + \left(\frac{\mu_2}{\bar\nu_2(y_\ell(\omega))} + \mu_1 H \omega \right) \cos[ \omega |\bar\nu_1(y_\ell(\omega))| H ] \right]
        \end{multlined} \\
        = \mu_2 H \bar\nu_2(y_\ell(\omega)) |\bar\nu_1(y_\ell(\omega))|\sin[\omega |\bar\nu_1(y_\ell(\omega))| H ] + \mu_1 H |\bar\nu_1(y_\ell(\omega))|^2 \cos[ \omega |\bar\nu_1(y_\ell(\omega))| H ]\,,
    \end{multline*}
    where $|\bar\nu_1(y_\ell(\omega))|, \bar\nu_2(y_\ell(\omega))>0$ since $|\bar\nu_1(y_\ell(\omega))|<y_\ell(\omega)<1/C_1$. Multiplying both sides by $\bar\nu_2(y_\ell(\omega))>0$ then taking the limit $\omega\searrow\omega_\ell = \frac{\ell-1}{\sqrt{1/C_1^2 - 1/C_2^2}} \frac{\pi}{H}$, we obtain
    \[
        (-1)^{\ell-1} \frac{\mu_2}{C_2} \lim_{\omega\searrow\omega_\ell} y_\ell'(\omega) = 0\,,
    \]
    where we used that $\lim_{\omega_\ell} y_\ell = 1/C_2$, hence $\lim_{\omega_\ell} \bar\nu_2\circ y_\ell = 0$, $\lim_{\omega_\ell} |\bar\nu_1|\circ y_\ell = \sqrt{1/C_1^2-1/C_2^2}$, and $\lim_{\omega\searrow\omega_\ell} \omega |\bar\nu_1(y_\ell(\omega))| H = (\ell-1) \pi$.
\end{proof}

\end{document}